\declaretheorem[name=Theorem, numberwithin=section]{theorem}
\declaretheorem[name=Lemma, sibling=theorem]{lemma}
\declaretheorem[name=Proposition, sibling=theorem]{proposition}
\declaretheorem[name=Corollary, sibling=theorem]{corollary}
\declaretheorem[name=Conjecture, sibling=theorem]{conjecture}
\declaretheorem[name=Problem, sibling=theorem]{problem}
\def\cqedsymbol{\ifmmode$\lrcorner$\else{\unskip\nobreak\hfil
\penalty50\hskip1em\null\nobreak\hfil$\lrcorner$
\parfillskip=0pt\finalhyphendemerits=0\endgraf}\fi}
\newcommand{\wFunc}[1]{{g_N\left(#1\right)}}
\let\le\leqslant
\let\ge\geqslant
\let\leq\leqslant
\let\geq\geqslant
\newcommand{\dt}{dt}
\title{Prime and polynomial distances in colourings\\ of the plane\thanks{This work is a part of projects CUTACOMBS (R. McCarty) and BOBR (Mi. Pilipczuk) that have received funding from the European Research Council (ERC) under the European Union’s Horizon 2020 research and innovation programme (grant agreement No. 714704 and No. 948057, respectively).
Rose McCarty is also supported by the National Science Foundation under Grant No. DMS-2202961.}}
\author[1]{James Davies}
\author[2]{Rose McCarty}
\author[3]{Micha{\l} Pilipczuk}
\affil[1]{University of Cambridge, United Kingdom.}
\affil[2]{Georgia Institute of Technology, United States.\footnote{Previously at Princeton University, United States.}}
\affil[3]{University of Warsaw, Poland.}
\date{}
\begin{document}

\maketitle

\begin{abstract}
    We give two extensions of the recent theorem of the first author that the odd distance graph has unbounded chromatic number. The first is that for any non-constant polynomial $f$ with integer coefficients and positive leading coefficient, every finite colouring of the plane contains a monochromatic pair of distinct points whose distance is equal to $f(n)$ for some integer $n$. The second is that for every finite colouring of the plane, there is a monochromatic pair of points whose distance is a prime number.
\end{abstract}

\section{Introduction}

Furstenberg~\cite{furstenberg1977ergodic} and S{\'a}rk{\"o}zy~\cite{sarkozy1978difference} independently proved that given a non-zero polynomial $f$ with integer coefficients and $f(0)=0$, every finite colouring of the integers contains a pair of distinct monochromatic integers which are of the form $a$ and $a + f(n)$ for some $n\in \mathbb{Z}$.
The condition that $f(0)=0$ is not a necessary one; Kamae and Mend{\`e}s France~\cite{kamae1978van} proved that this condition can be replaced by the more general condition that for every non-zero $k\in \mathbb{N}$, we have $f(\mathbb{Z}) \cap k\mathbb{Z} \not= \emptyset$.
This condition is necessary, since otherwise by colouring the integers according to their residues modulo $k$, we could avoid such monochromatic pairs.

Note in particular that there is a $2$-colouring of the integers that avoids monochromatic pairs of the form $a$ and $a+(2n+1)$, in other words, pairs whose difference is odd.
Rosenfeld~\cite{erdos1994twenty} asked whether the situation is different in the plane. More precisely, he asked how many colours are required to colour $\mathbb{R}^2$ so that there are no two monochromatic points whose distance is an odd integer. As with the Hadwiger-Nelson problem (see~\cite{SoiferHistory2009} for a history of this well-known problem), small unit distance graphs such as the Moser spindle give a lower bound of~$4$. Ardal, Ma{\v{n}}uch, Rosenfeld, Shelah, and Stacho~\cite{ardal2009odd} improved this lower bound by constructing a $5$-chromatic odd distance graph. Then, in a breakthrough result, de Grey~\cite{de2018chromatic} and Exoo and Ismailescu~\cite{exoo2020chromatic} independently constructed $5$-chromatic unit distance graphs. Recently, Parts~\cite{parts20226} constructed an odd distance graph requiring at least $6$ colours.

Rosenfeld's odd distance problem was very recently solved by the first author~\cite{davies2022odd}, who proved that infinitely many colours are required. This shows a fundamental difference between the chromatic number of the $f(n)=2n+1$ distance graph on $\mathbb{R}^2$ and on $\mathbb{Z}$ (or equivalently on $\mathbb{R}$, since one can extend a colouring on $\mathbb{Z}$ to $\mathbb{R}$ by assigning each interval $(n,n+1)$ the same colour as $n$). In fact, the first author~\cite{davies2022odd} proved this result about the plane in a more general setting, for any linear polynomial $f(n)=an+b$ over $\mathbb{Z}$. Extending the methods of~\cite{davies2022odd}, we further generalize this result to essentially any polynomial over $\mathbb{Z}$.

\begin{theorem}\label{poly dist}
    Let $f(x)=a_rx^r + \cdots + a_1x+ a_0$ be a non-constant polynomial with integer coefficients and with $a_r \ge 1$.
	Then every finite colouring of the plane contains a monochromatic pair of distinct points whose distance from each other is equal to $f(n)$ for some integer $n$.
\end{theorem}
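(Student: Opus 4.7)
The plan is to adapt Davies'~\cite{davies2022odd} approach for the linear case and substitute the relevant arithmetic input with its polynomial analogue. By the De Bruijn--Erdős compactness theorem it suffices, for every integer $k \geq 1$, to exhibit a finite subgraph of the $f$-distance graph on $\mathbb{R}^2$ (with edges between distinct points at distance $f(n)$ for some $n \in \mathbb{Z}$) whose chromatic number exceeds $k$.

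I would first dispose of the easy case. Writing $D(f) = \{f(n) : n \in \mathbb{Z}\} \cap \mathbb{R}_{>0}$, suppose the Kamae--Mendès France condition $f(\mathbb{Z}) \cap m\mathbb{Z} \neq \emptyset$ holds for every non-zero $m \in \mathbb{N}$. Then every finite colouring of $\mathbb{Z}$ has a monochromatic pair differing by an element of $D(f)$, i.e.\ the Cayley graph of $\mathbb{Z}$ with connection set $D(f)$ has unbounded chromatic number, and since $\mathbb{Z}$ embeds isometrically as a line in $\mathbb{R}^2$, the theorem follows at once. The remaining---and genuinely two-dimensional---situation is when the Kamae--Mendès France condition fails, i.e.\ all values of $f$ share a common non-trivial residue class (for instance, $f(n) = 2n+1$, handled by Davies, is always odd).

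For this obstructed case I would mimic Davies' geometric construction: place many points on a circle of large radius $R$ so that the chord lengths, given by $2R\sin(\theta/2)$, land in $D(f)$ for many pairs. This reduces the task to a one-dimensional question about which angular separations $\theta$ are realised as such chords, and by choosing $R$ appropriately one can arrange an arbitrarily dense subset of angles to be realizable. The linear equidistribution of $\{n\alpha \bmod 1\}$ used in Davies' argument is replaced by Weyl's theorem for polynomial sequences: for every irrational $\alpha$, the sequence $\{f(n)\alpha \bmod 1\}_{n \in \mathbb{N}}$ is equidistributed on $[0,1)$. Feeding this into the same type of gadget construction as in the linear case should yield a planar subgraph whose chromatic number tends to infinity.

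The main obstacle is the non-uniform spacing of the values $f(n)$ when $r \geq 2$: since $f(n+1) - f(n) \sim r a_r n^{r-1}$ grows with $n$, the angles $\arcsin(f(n)/(2R))$ on a fixed circle are highly non-uniform and Davies' construction does not transfer verbatim. I expect the remedy is either to pass to an arithmetic progression $\{n_0 + tm : t \in \mathbb{Z}\}$ on which $f$ becomes a polynomial in $t$ of controlled behaviour modulo any prescribed integer, or to iterate the construction across multiple scales $R_1 \ll R_2 \ll \cdots$ and combine them via a product construction. Verifying the chromatic lower bound in the resulting multi-scale graph, likely via a polynomial recurrence theorem (Furstenberg--Sárközy, or Kamae--Mendès France itself) applied at each scale, is where most of the technical work will lie.
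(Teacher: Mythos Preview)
Your proposal has a genuine gap: it misidentifies both the architecture of Davies' method and the analytic input needed to extend it. Davies' argument is not a chord-on-a-circle construction driven by equidistribution of $\{n\alpha\}$; it is a spectral bound. One builds explicit Cayley graphs $G(\mathbb{Z}^2, \pm A D_{q,k})$ on the integer lattice, embeds them isometrically into the $f$-distance graph on $\mathbb{R}^2$ via a carefully chosen non-orthogonal basis (Lemma~\ref{embed}), and then bounds their independence density using the ratio bound (Theorem~\ref{ratio bound}). The whole game is to choose weights $w_N$ so that the Fourier-type sum $\widehat{w}_N(u)=\sum_{d}\sum_{j} g_N(j)\,e\bigl((u\cdot d)f(j)\bigr)$ has supremum of order $k$ but infimum bounded independently of $k$. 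Proving the latter is exactly an exponential-sum estimate for $\sum_{j\le N} e(\alpha f(j))$ uniformly in $\alpha$, and Weyl's qualitative equidistribution theorem is far too weak for this: one needs Vinogradov's quantitative bounds (Theorem~\ref{poly estimate}) together with a circle-method decomposition into integral, major, and minor arcs, plus a separate combinatorial lemma (Lemma~\ref{sandwich}) limiting how many directions $d\in D_{q,k}$ can simultaneously fall in a major arc.

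Your case split via Kamae--Mend\`es France is correct as far as it goes, but it does not reduce the difficulty: the obstructed case (e.g.\ $f(n)=2n+1$) is precisely the hard one, and your sketch for it---multi-scale circles, polynomial recurrence at each scale, product constructions---does not supply the missing quantitative control. The ``non-uniform spacing'' obstacle you flag is real, but the paper's resolution is not a geometric rescaling; it is the weight $g_N(j)\propto f'(j)\bigl(1-f(j)/f(N)\bigr)$, chosen so that on the integral arcs the weighted sum becomes (after passing to an integral) the nonnegative quantity $\frac{1-\cos(2\pi\alpha f(N))}{(2\pi\alpha)^2 f(N)}$. None of this is visible from the equidistribution viewpoint you adopt.
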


The condition that $a_n\ge 1$ is simply to exclude the degenerate case that $f(x)$ is a polynomial with only a bounded range of positive values (in which case, we can obtain a finite colouring of the plane avoiding such monochromatic pairs by taking a product of colourings where each avoids monochromatic pairs of one given distance).

Eggleton, Erd{\H{o}}s, and Skilton~\cite{eggleton1990colouring} also studied the prime distance graph on $\mathbb{Z}$ (or equivalently on $\mathbb{R}$, for the same reason as before). Here, the chromatic number is at most~$4$ since $4\mathbb{Z}$ contains no primes. (In fact the chromatic number is exactly~$4$, as observed in~\cite{eggleton1990colouring}.)
The situation again is different in the plane; we show that there is no finite colouring of $\mathbb{R}^2$ avoiding monochromatic pairs of points whose distance is a prime number.

\begin{theorem}\label{prime dist}
    Every finite colouring of the plane contains a monochromatic pair of points whose distance from each other is a prime number.
\end{theorem}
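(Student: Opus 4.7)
Suppose towards a contradiction that $c\colon\mathbb{R}^2\to[k]$ is a finite colouring admitting no monochromatic pair at prime distance, and write $M\subseteq\mathbb{R}_{>0}$ for the set of distances realised by monochromatic pairs. A naive application of Theorem~\ref{poly dist} with $f(x)=x$ only places a single, uncontrolled positive integer into $M$, and even after pre-scaling the colouring by various $\lambda$ (so that $M\cap\lambda\mathbb{Z}_{>0}\neq\emptyset$ for every $\lambda>0$), there is no way to force a prime into $M$: the theorem yields one witness integer per application and primality is not controlled. So I do not expect Theorem~\ref{prime dist} to follow from Theorem~\ref{poly dist} as a black box.

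Instead, the plan is to revisit the \emph{proof} of Theorem~\ref{poly dist} and substitute a prime-shift analogue for its number-theoretic core. In view of the references to Furstenberg and Sárközy in the introduction, Theorem~\ref{poly dist} presumably decomposes into (i) a geometric/combinatorial phase which, starting from a finite colouring of the plane, extracts from some colour class a set $A\subseteq\mathbb{Z}$ of positive upper Banach density and a natural scaling such that every difference $a-a'$ with $a,a'\in A$ corresponds to a monochromatic pair in $\mathbb{R}^2$ at a distance given by a known function of $a-a'$, and (ii) a number-theoretic phase in which the Furstenberg--Sárközy theorem produces a difference of the form $f(n)$ inside $A-A$. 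For Theorem~\ref{prime dist} I would retain phase (i) essentially unchanged and replace phase (ii) with Sárközy's theorem on shifted primes: every set of positive upper Banach density in $\mathbb{Z}$ contains two elements whose difference is $p-1$, and in strengthened form $p$ itself, for some prime $p$.

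The main obstacle I anticipate is aligning phase (i) with the prime-shift theorem. The precise form of the correspondence between differences $a-a'$ and distances in the plane will determine whether the classical Sárközy-for-primes theorem applies directly, or whether additional number-theoretic input is required---for example Dirichlet's theorem on primes in arithmetic progressions to absorb a nonzero additive shift, or a more delicate estimate on primes in a polynomial sequence to handle a nonlinear rescaling. I would expect the linear case (distance equals $a-a'$ up to a fixed integer factor) to fall out cleanly from shifted-primes Sárközy, so the crux of the work is probably to arrange phase~(i) so that exactly this linear case is what must be handled. Once aligned, the difference produced by the prime-shift theorem gives a monochromatic pair in the plane at a prime distance, contradicting our assumption and completing the proof.
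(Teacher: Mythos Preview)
Your proposal rests on a mistaken guess about the structure of the paper's argument and, more seriously, invokes a theorem that is false.

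First, the paper's proof of Theorem~\ref{poly dist} does \emph{not} decompose into a geometric reduction to a positive-density subset of $\mathbb{Z}$ followed by an application of Furstenberg--S\'ark\"ozy. (The references to Furstenberg and S\'ark\"ozy in the introduction describe the one-dimensional situation only as motivating background; the whole point of the paper is that the plane behaves differently.) Instead, the paper constructs explicit Cayley graphs $G(\mathbb{Z}^2, \pm A D_{Q!,k})$ on $\mathbb{Z}^2$ that embed into the $A$-distance graph of $\mathbb{R}^2$ (Lemma~\ref{embed}), and then bounds their independence density via a spectral ratio bound (Theorem~\ref{ratio bound}). Applying that bound requires controlling the function $\widehat{w}(u)=\sum_{x\in C} w(x)\,e(u\cdot x)$, and this is where Vinogradov's estimates enter: for Theorem~\ref{poly dist} one uses Vinogradov's bounds for polynomial phases, while for Theorem~\ref{prime dist} the paper substitutes Vinogradov's bounds for $\sum_{p\le N}(\log p)\,e(\alpha p)$ and runs a circle-method analysis (integral, major, and minor arcs). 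There is no positive-density subset of $\mathbb{Z}$ and no S\'ark\"ozy step anywhere in the argument.

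Second, and fatally, the ``strengthened form'' of S\'ark\"ozy's prime theorem you propose---that every set of positive upper density in $\mathbb{Z}$ contains two elements differing by a prime $p$---is simply false. The set $4\mathbb{Z}$ has density $1/4$ and contains no two elements at prime difference, since every nonzero difference is a multiple of $4$. This is exactly why the one-dimensional prime-distance graph has chromatic number $4$, as the paper recalls in the introduction. S\'ark\"ozy's actual theorem yields differences of the form $p-1$ (or $p+1$), not $p$, and that shift cannot be removed. Consequently, no argument that reduces the problem to a one-dimensional density statement can possibly prove Theorem~\ref{prime dist}: the content of the theorem is precisely that the two-dimensional situation is different from the one-dimensional one, and the paper captures this via the spectral method rather than any density-difference method.
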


We remark that if we require the colour classes to be measurable sets, then more is known.
F{\"u}rstenberg, Katznelson, and Weiss~\cite{furstenberg1990ergodic} gave an ergodic-theoretic proof that for every finite measurable colouring of the plane, there exists $d_0 > 0$ such that for any real number $d \ge d_0$, there is a monochromatic pair of points at distance $d$ from each other. For other proofs of this result, see~\cite{bourgain1986szemeredi,bukh2008measurable,de2010fourier,falconer1986plane}.

Our method to prove Theorem~\ref{poly dist} and Theorem~\ref{prime dist} extends the spectral method of~\cite{davies2022odd} on Cayley graphs of $\mathbb{Z}^d$ by combining it with Vinogradov's~\cite{nathanson1996additive,vinogradov1937representation,vinogradow1937some,vinogradov1984method} estimates for exponential sums.
In Section~\ref{sec:lattices}, we introduce some necessary preliminaries including the spectral ratio bound theorem of~\cite{davies2022odd}. We also show that certain Cayley graphs of $\mathbb{Z}^2$ embed as distance graphs in $\mathbb{R}^2$, and give a couple of additional preliminary lemmas.
Then, we prove Theorem~\ref{poly dist} and Theorem~\ref{prime dist} in Section~\ref{sec:poly} and Section~\ref{sec:prime}, respectively.
Lastly, in Section~\ref{sec:con}, we discuss some open problems, and we give an obstruction to generalizing these theorems to triples of points (see Proposition~\ref{line}).

\section{Lattices and other preliminaries}\label{sec:lattices}

We say that a set $C\subseteq \mathbb{Z}^2$ is \emph{centrally symmetric} if $C=-C$. Similarly, a function $w: C \to \mathbb{R}$ is \emph{centrally symmetric} if $w(x)=w(-x)$ for all $x\in C$. For a centrally symmetric set $C\subseteq \mathbb{Z}^2 \backslash \{0\}$, we let $G(\mathbb{Z}^2, C)$ be the Cayley graph of $\mathbb{Z}^2$ with generating set $C$.
This is the graph with vertex set $\mathbb{Z}^2$, where two vertices $u,v\in \mathbb{Z}^2$ are adjacent if there exists a $c\in C$ with $u+c=v$.
Given a set $I\subseteq \mathbb{Z}^2$, its \emph{upper density} is
\[
\delta(I) = \limsup_{R \to \infty} \frac{|I \cap [-R,R]^2  |}{(2R+1)^2}.
\]

An \emph{independent set} of a graph $G$ is a set of pairwise non-adjacent vertices.
For a Cayley graph $G=G(\mathbb{Z}^2 , C)$, its \emph{independence density} is
\[
\overline{\alpha}(G) = \sup \{ {\delta(I)} : I \text{ is an independent set of } G \}.
\]
In particular, notice that $\chi(G) \ge 1/ \overline{\alpha}(G)$, where $\chi(G)$ denotes the chromatic number of $G$, and we consider the right hand side to be $\infty$ if $\overline{\alpha}(G)=0$. To see this inequality, note that if $V(G)$ can be partitioned into $k$ independent sets $I_1, \ldots, I_k$, then $k\overline{\alpha}(G) \geq \delta(I_1)+\dots+\delta(I_k) \geq \delta(V(G))=1$. Thus, we can focus on showing that the independence density is small.

Next we state a slightly simplified version of the ratio bound given in \cite[Theorem~3]{davies2022odd}, which was used to show that the odd distance graph has unbounded chromatic number. This is essentially an analogue of the Lov{\'a}sz theta bound~\cite{lovasz1979shannon} (or a weighted version of Hoffman's ratio bound; see~\cite{hoffman}) for Cayley graphs of $\mathbb{Z}^2$. It gives an upper bound for $\overline{\alpha}$. Informally, this upper bound also allows us some freedom to choose a ``good weight function'' $w$ of the generating set $C$.

To state this theorem, it is convenient to give some additional notation. First, for $\alpha \in \mathbb{R}$, we let $e(\alpha) = e^{2 \pi i \alpha}$. Notice that $e(\alpha)+e(-\alpha)=2\cos(2\pi \alpha)$ is a real number between $-2$ and $2$; this is one way we will use central symmetry. We write $u \cdot x$ for the dot product. Now, given a finite set $C\subseteq \mathbb{Z}^2 \backslash \{0\}$ and $w:C \to \mathbb{R}_{\ge 0}$ which are centrally symmetric, we write $\widehat{w}$ for the function where, for each $u \in \mathbb{R}^2$, we set
\[
	\widehat{w}(u) = \sum_{x\in C} w(x) e( u \cdot x).
\]
\noindent Notice that since $C$ and $w$ are centrally symmetric and $w:C \to \mathbb{R}_{\ge 0}$, the function $\widehat{w}$ is real-valued and attains its supremum at $\widehat{w}(0) =\sum_{x\in C} w(x)$.

Now we are ready to state the key theorem.

\begin{theorem}[Davies {\cite[Theorem~3]{davies2022odd}}]\label{ratio bound}
    For any finite set $C\subseteq \mathbb{Z}^2 \backslash \{0\}$ and any function $w:C \to \mathbb{R}_{\ge 0}$ so that $w$ and $C$ are centrally symmetric and $\sum_{x\in C} w(x)$ is positive, we have
	\[
	\overline{\alpha}(G(\mathbb{Z}^2, C)) \le \frac{-\inf_{u\in \mathbb{R}^2}  \widehat{w}(u) }{  \sup_{u\in \mathbb{R}^2}  \widehat{w}(u)   -\inf_{u\in \mathbb{R}^2}  \widehat{w}(u)}.
	\]
\end{theorem}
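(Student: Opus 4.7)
The plan is to realise Theorem~\ref{ratio bound} as a limiting version, over the tori $(\mathbb{Z}/N\mathbb{Z})^2$ as $N \to \infty$, of the classical weighted Hoffman/Lov\'asz ratio bound for finite Cayley graphs. The connection is that the values $\widehat{w}(u)$ are precisely the eigenvalues of the weighted adjacency matrix on each such torus.

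First I would set up the spectral picture on the finite torus. Let $M = \max_{c \in C}\|c\|_\infty$ and fix $N > 2M$. On $(\mathbb{Z}/N\mathbb{Z})^2$, consider the weighted adjacency matrix $A_w^{(N)}$ defined by $A_w^{(N)}(x,y) = w(y-x)$, extending $w$ by $0$ off $C$. Central symmetry of $w$ and $C$ makes $A_w^{(N)}$ real and symmetric, and translation-invariance makes the additive characters $\chi_k(x) = e(k\cdot x/N)$, for $k \in (\mathbb{Z}/N\mathbb{Z})^2$, a complete orthogonal eigenbasis, with $\chi_k$ having eigenvalue $\widehat{w}(k/N)$. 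The all-ones vector $\mathbf{1} = \chi_0$ is thus the eigenvector for the eigenvalue $\widehat{w}(0) = \sum_{c \in C} w(c)$, which by central symmetry coincides with $\sup_{u}\widehat{w}(u)$. For any independent set $J$ of $G((\mathbb{Z}/N\mathbb{Z})^2, C)$ we have $\chi_J^{\top} A_w^{(N)} \chi_J = 0$. Decomposing $\chi_J = \tfrac{|J|}{N^2}\mathbf{1} + \xi$ with $\xi \perp \mathbf{1}$ and using $\xi^{\top} A_w^{(N)} \xi \ge \lambda_N \|\xi\|^2$, where $\lambda_N = \min_{k}\widehat{w}(k/N)$, a one-line rearrangement yields
\[
    \frac{|J|}{N^2} \;\le\; \frac{-\lambda_N}{\widehat{w}(0) - \lambda_N}.
\]

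Second, I would transfer an independent set of $G(\mathbb{Z}^2, C)$ with density close to $\overline{\alpha}$ down to each torus without losing noticeable density. Given $\epsilon > 0$ and an independent set $I \subseteq \mathbb{Z}^2$ with $\delta(I) \ge \overline{\alpha}(G(\mathbb{Z}^2, C)) - \epsilon$, a standard averaging argument over all integer translates of $[0,N)^2$ inside a sufficiently large cube $[-R,R]^2$ produces a translate $t + [0, N)^2$ in which $I$ has at least $(\delta(I) - 2\epsilon)N^2$ points. Restricting further to the inner sub-box $t + [0, N-M)^2$ loses at most $O(NM)$ points and, crucially, guarantees that no two points in the restricted set can be adjacent via a wrap-around in $(\mathbb{Z}/N\mathbb{Z})^2$, since any two such points differ by a vector of $\ell_\infty$-norm strictly less than $N - M$, hence cannot be congruent modulo $N$ to any $c \in C$ other than by equality. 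The restricted set therefore descends to a genuine independent set $J_N$ of the torus Cayley graph, satisfying $|J_N|/N^2 \ge \delta(I) - 2\epsilon - O(M/N)$.

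Feeding $J_N$ into the spectral inequality above and then letting $N \to \infty$ and $\epsilon \to 0$, the uniform continuity of the trigonometric polynomial $\widehat{w}$ together with the equidistribution of the grids $\{k/N : k \in (\mathbb{Z}/N\mathbb{Z})^2\}$ in $\mathbb{T}^2$ gives $\lambda_N \to \inf_{u \in \mathbb{R}^2} \widehat{w}(u)$; combined with $\widehat{w}(0) = \sup_u \widehat{w}(u)$, this produces the claimed bound on $\overline{\alpha}(G(\mathbb{Z}^2,C))$. I expect the main obstacle to be precisely this transfer step: one must be careful that the density losses from pigeonholing a good translate and from trimming a boundary buffer of width $M$ can simultaneously be made smaller than any prescribed $\epsilon$ by choosing $N$ and $R$ large enough. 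The spectral computation on the torus itself is essentially the classical Hoffman argument transcribed into the Fourier basis.
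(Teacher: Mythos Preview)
The paper does not actually prove Theorem~\ref{ratio bound}; it is quoted from \cite[Theorem~3]{davies2022odd} and used as a black box, so there is no ``paper's own proof'' to compare against. That said, your approach is correct and is essentially the natural one: reduce to the finite abelian group $(\mathbb{Z}/N\mathbb{Z})^2$, where the weighted adjacency operator is diagonalised by characters with eigenvalues $\widehat{w}(k/N)$, apply the standard Hoffman/ratio-bound computation $0=\chi_J^{\top}A_w^{(N)}\chi_J\ge \widehat{w}(0)\,\tfrac{|J|^2}{N^2}+\lambda_N\bigl(|J|-\tfrac{|J|^2}{N^2}\bigr)$, and then pass to the limit. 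Two small remarks. First, you do not actually need the convergence $\lambda_N\to\inf_u\widehat{w}(u)$: since $\lambda_N\ge\inf_u\widehat{w}(u)$ trivially and $x\mapsto -x/(\widehat{w}(0)-x)$ is decreasing on $(-\infty,\widehat{w}(0))$, the finite-$N$ bound is already dominated by the claimed one, so only the density-transfer step needs $N\to\infty$. Second, in your buffer argument you should say explicitly that the projection $t+[0,N)^2\to(\mathbb{Z}/N\mathbb{Z})^2$ is a bijection, so that distinct points of $I$ in the inner box map to distinct points of the torus; your wrap-around argument then goes through verbatim. With those clarifications the sketch is complete.
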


\noindent With the appropriate conditions, this theorem can be formulated when $C$ is infinite as well. This theorem implies that, informally, the chromatic number of $G(\mathbb{Z}^2, C)$ is large when the quantity $\sum_{x\in C} w(x)$, which is the supremum, is many times larger than $-\inf_{u\in \mathbb{R}^2}  \widehat{w}(u)$.
Note that $\inf_{u\in \mathbb{R}^2}  \widehat{w}(u)$ is always negative, since $\int_{[0,1]^2} \widehat{w}(u) \,du = 0$.

Now we show that certain Cayley graphs over $\mathbb{Z}^2$ are subgraphs of certain distance graphs in the plane. For positive integers $q$ and $k$ with $k \geq 2$, we let
\[
X_{q,k} = \prod_{j=1}^{k-1} (q^{k+j} +  q^{k-j}   +1  ),
\]
so that $X_{q,k}$ is a positive integer. We also let $D_{q,k}$ be the set which contains, for each $j \in \{1,2\ldots, k-1\}$, the following element of $\mathbb{Z}^2$:
\[
\frac{X_{q,k}}{q^{k+j} +  q^{k-j}   +1} \left(
      q^{k+j}-q^{k-j} ,
      q^{j} + 2q^{k}
    \right).
\]
\noindent Given a set $A \subseteq \mathbb{N}$, we write $\pm AD_{q,k}$ for the set of all elements of $\mathbb{Z}^2$ which are of the form $\pm ad$ for some $a \in A$ and $d \in D_{q,k}$. 

We show that for any $A$, the Cayley graph generated by $\pm AD_{q,k}$ is contained in the distance graph of the plane with distance set $A$.

\begin{lemma}\label{embed}
For any positive integers $q$ and $k$ with $k \geq 2$ and for any $A\subseteq \mathbb{N}$, the graph $G(\mathbb{Z}^2, \pm AD_{q,k})$ is a subgraph of the graph on vertex set $\mathbb{R}^2$ where two vertices are adjacent if their distance is contained in $A$.
\end{lemma}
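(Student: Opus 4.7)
The plan is to construct an injective linear map $T\colon \mathbb{Z}^2\hookrightarrow \mathbb{R}^2$ under which every $d\in D_{q,k}$ has Euclidean norm exactly $1$. Granted such a $T$, for any edge $\{u,\,u\pm ad\}$ of the Cayley graph $G(\mathbb{Z}^2, \pm AD_{q,k})$ (with $a\in A$ and $d\in D_{q,k}$) we have $\|Tu-T(u\pm ad)\| = a\cdot\|Td\| = a\in A$, so the images are adjacent in the distance graph on $\mathbb{R}^2$ and the map gives the required subgraph embedding.

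A linear map $T$ with Gram matrix $Q=T^{\top}T=\bigl(\begin{smallmatrix}\alpha & \beta\\ \beta & \gamma\end{smallmatrix}\bigr)$ satisfies $\|T(a,b)\|^2 = \alpha a^2+2\beta ab+\gamma b^2$. Writing $d_j = \tfrac{X_{q,k}}{M_j}(p_j,-r_j)$ with $p_j=q^{k+j}-q^{k-j}$, $r_j=q^j+2q^k$ and $M_j=q^{k+j}+q^{k-j}+1$, the requirement $\|Td_j\|^2=1$ becomes the system
\[
\alpha p_j^2 - 2\beta p_j r_j + \gamma r_j^2 \;=\; \frac{M_j^2}{X_{q,k}^2} \qquad (j=1,\ldots,k-1).
\]
This is $k-1$ equations in three unknowns, so for unbounded $k$ some nontrivial polynomial identity must be in play.

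I would try the Ansatz $\alpha=\gamma=X_{q,k}^{-2}$ and $\beta=(2q^k X_{q,k}^2)^{-1}$, which corresponds to embedding $\mathbb{Z}^2$ as the lattice spanned by two vectors of length $X_{q,k}^{-1}$ at angle $\theta$ with $\cos\theta=1/(2q^k)$; positive definiteness of $Q$ is automatic since $|\cos\theta|<1$ for $q,k\ge 1$, so $T$ is indeed an injective embedding. With this choice the whole system collapses to the single polynomial identity
\[
p_j^2 \;-\; \frac{p_j r_j}{q^k} \;+\; r_j^2 \;=\; M_j^2,
\]
which I would verify by direct expansion in powers of $q$: a short calculation yields $p_j^2+r_j^2-M_j^2 = q^{2j}+2q^{k+j}-2q^{k-j}-1$, and $p_j r_j/q^k$ expands to the very same expression.

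The main obstacle is guessing the correct form of the Gram matrix: the skewing ratio $\beta/\alpha = 1/(2q^k)$ is not visible a priori and is the unique value producing the required cancellation in all $k-1$ equations at once. Once this Ansatz is identified, the remaining steps (the identity above, the positive definiteness of $Q$, and the use of the normalization $X_{q,k}^{-1}$ so that $\|Td_j\|=1$ rather than a common but nontrivial value) are mechanical.
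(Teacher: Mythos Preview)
Your proposal is correct and is essentially the same approach as the paper's proof: the paper defines the embedding explicitly via $h(x,y)=xe_1+ye_2$ with $e_1=X_{q,k}^{-1}(1,0)$ and $e_2=X_{q,k}^{-1}\bigl(\tfrac{1}{2q^k},\sqrt{1-\tfrac{1}{4q^{2k}}}\bigr)$, whose Gram matrix is exactly your $\bigl(\begin{smallmatrix}\alpha&\beta\\\beta&\gamma\end{smallmatrix}\bigr)$ with $\alpha=\gamma=X_{q,k}^{-2}$ and $\beta=(2q^kX_{q,k}^2)^{-1}$, and the subsequent polynomial identity is the same computation the paper carries out in coordinates.
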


\begin{proof}
    Let us write $G=G(\mathbb{Z}^2, \pm AD_{q,k})$ for convenience. We will use the following two basis vectors of $\mathbb{R}^2$ to define an injection from $V(G)$ to $\mathbb{R}^2$. Let
\[
e_1 = \frac{1}{X_{q,k}} (1,0),
\hspace{.3cm}\textrm{ and }\hspace{.3cm}
e_2 = \frac{1}{X_{q,k}} \left(-\frac{1}{2q^{k}} , \sqrt{1 - \frac{1}{4q^{2k}}} \right).
\]
Then, let $h:\mathbb{Z}^2 \to \mathbb{R}^2$ be defined by setting $h(x,y) = xe_1 + ye_2$ for each $(x,y) \in \mathbb{Z}^2$. Clearly $h$ is injective, so it is enough to show that if $(x,y)$ is adjacent to $(x',y')$ in $G$, then the distance between $h(x,y)$ and $h(x',y')$ is contained in $A$.

	So, suppose that $(x,y)$ is adjacent to $(x',y')$ in $G$. Then $(x-x',y-y')\in \pm AD_{q,k}$, and thus there exists a pair of integers $a\in A$ and $j \in \{1, \ldots, k-1\}$ such that
	\[
	(x-x',y-y') = 
	\frac{\pm a X_{q,k}}{q^{k+j} + q^{k-j} + 1} \left(
	q^{k+j}-q^{k-j} ,
	q^{j} + 2q^{k}
	\right).
	\]
 
        \noindent Then, 
	\begin{align*} 
	&
	\left(
	\frac{q^{k+j} + q^{k-j} + 1}{a}
	\right)^2
	\left\| h(x,y) - h(x',y')\right\|^2
    \\
	& \qquad\qquad =
	\left\|
	\left(\frac{q^{k+j} + q^{k-j} + 1}{a}\right)\left((x-x')e_1+(y-y')e_2 \right)
	\right\|^2
	\\
	& \qquad\qquad =
	\left\|
	(q^{k+j}-q^{k-j})\left(1,0\right)  +
	(q^{j} + 2q^{k}) \left(-\frac{1}{2q^{k}} , \sqrt{1 - \frac{1}{4q^{2k}}} \right)
	\right\|^2
	\\
        & \qquad\qquad = \left[(q^{k+j}-q^{k-j}) - \frac{1}{2q^{k}}  (q^{j} + 2q^{k})  \right]^2 + \left[ \sqrt{1-\frac{1}{4q^{2k}}} (q^{j} + 2q^{k}) \right]^2  
        \\
	& \qquad\qquad = (q^{k+j}-q^{k-j})^2 +  (q^{j} + 2q^{k})^2 - \frac{1}{q^{k}} (q^{k+j}-q^{k-j}) (q^{j} + 2q^{k})  
        \\
	& \qquad\qquad = (q^{2k+2j}+ q^{2k-2j} -2q^{2k})  
	+ (q^{2j} +4q^{2k} +4q^{k+j})  
	+ (-q^{2j}  + 2q^{k-j}  -2 q^{k+j}  + 1  )    
        \\
	& \qquad\qquad = q^{2k+2j} + q^{2k-2j}  + 2q^{2k}   +2q^{k+j} + 2q^{k-j}    + 1 
        \\
	& \qquad\qquad = (q^{k+j} +  q^{k-j}   + 1  )^2.
     \end{align*}
    Hence $\left\| h(x,y) - h(x',y')\right\| = a$, as desired.
\end{proof}

We now show that, as long as $q \geq 2$, no two points in $D_{q,k}$ are scalar multiples of each other. This lemma will be useful later on; for instance it allows us to conclude that $|AD_{q,k}|=|A||D_{q,k}|$ for any $A \subseteq \mathbb{Z}$.

\begin{lemma}\label{not scalar multiples}
For any positive integers $k$ and $q$ with $q \geq 2$, no two points in $D_{q,k}$ are scalar multiples of each other.
\end{lemma}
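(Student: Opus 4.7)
The elements of $D_{q,k}$ are indexed by $j \in \{1,\dots,k-1\}$ and each is obtained from the vector
\[
v_j \;=\; \bigl(q^{k+j}-q^{k-j},\; -q^{j}-2q^{k}\bigr)
\]
by multiplying by the positive scalar $X_{q,k}/(q^{k+j}+q^{k-j}+1)$. Since the relation ``is a scalar multiple of'' is invariant under multiplication by nonzero scalars, it suffices to show that for every pair $1 \le j < j' \le k-1$, the vectors $v_j$ and $v_{j'}$ are linearly independent. Equivalently, their determinant
\[
\Delta(j,j') \;=\; (q^{k+j}-q^{k-j})(-q^{j'}-2q^{k}) \;-\; (q^{k+j'}-q^{k-j'})(-q^{j}-2q^{k})
\]
must be nonzero.

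The main step is a routine expansion: multiply out both products and collect terms. The cross terms $-q^{k+j+j'}$ and the ``$2q^{k+j'+j}$''-type terms coming from each product cancel in pairs, and after simplification I expect to obtain an expression of the form
\[
\Delta(j,j') \;=\; 2\bigl(q^{2k+j'}-q^{2k+j}\bigr) \;+\; 2\bigl(q^{2k-j}-q^{2k-j'}\bigr) \;+\; \bigl(q^{k+j'-j}-q^{k+j-j'}\bigr).
\]
When $q \ge 2$ and $j < j'$, each of the three grouped differences has the larger exponent on the left, so every bracketed quantity is strictly positive. Hence $\Delta(j,j') > 0$, and $v_j, v_{j'}$ are not scalar multiples of each other.

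The only subtle point is making sure nothing that looks like it could cancel actually does. This is controlled by the condition $q \ge 2$: if $q = 1$ then all powers of $q$ collapse to $1$, $v_j$ becomes the constant vector $(0,-3)$ independent of $j$, and the claim fails. With $q \ge 2$ strict inequalities between distinct powers of $q$ hold, and the argument above goes through without issue. The whole proof is thus a single determinant computation followed by three one-line sign checks.
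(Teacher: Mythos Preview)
Your proof is correct. The determinant expansion is accurate (the $-q^{k+j+j'}$ terms do cancel as you predicted), and for $j<j'$ and $q\ge 2$ each of the three bracketed differences is strictly positive, so $\Delta(j,j')>0$ and $v_j,v_{j'}$ are linearly independent.

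The paper takes a different route. Instead of expanding the determinant, it assumes the ratios of coordinates coincide, cross-multiplies to get
\[
(q^{k+j}-q^{k-j})(2q^k+q^i)=(q^{k+i}-q^{k-i})(2q^k+q^j)\qquad(i<j),
\]
and then compares the exact power of $q$ dividing each side: the left side has $q$-adic valuation $k-j+i$, while the right side has valuation $k-i+j>k-j+i$, a contradiction. So the paper's argument is a $q$-adic valuation trick, whereas yours is a full expansion followed by a sign check. Your approach is slightly longer on the page but completely elementary and gives more information (an explicit positive formula for the determinant); the paper's approach avoids the bookkeeping of the expansion at the cost of requiring one to spot the valuation observation. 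Both rely on $q\ge 2$ in the same essential way: to ensure distinct powers of $q$ are genuinely distinct.
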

\begin{proof}
    Going for a contradiction, suppose otherwise. Then there exist distinct $P, P' \in D_{q,k}$ so that the ratio of the two coordinates of $P$ is the same as for $P'$. Using the definition of $D_{q,k}$, this means that there exist distinct $i, j \in \{1,2,\ldots, k-1\}$ so that \[
    \frac{q^{k+j}-q^{k-j}}{q^{j} + 2q^{k}} =\frac{q^{k+i}-q^{k-i}}{q^{i} + 2q^{k}} .
    \] 
    \noindent From the above, we have that \[
    \left(q^{k+j}-q^{k-j}\right) \left(2q^k+q^i \right)=\left(q^{k+i}-q^{k-i}\right) \left(2q^k+q^j \right).
    \] Without loss of generality we may assume that $i<j$. Then $q^{k-j+i}$ is the highest power of $q$ which divides the left-hand side, but $q^{k-i+j}$, which is larger, divides the right-hand side. This contradiction shows that indeed no two points in $D_{q,k}$ are scalar multiples of each other.
\end{proof}

Now we prove a lemma that will be used to help examine certain exponential sums in the proofs of both Theorem~\ref{poly dist} and Theorem~\ref{prime dist}. This lemma will be used in combination with Lemma~\ref{embed}, where we will set $q = Q!$.

\begin{lemma}\label{sandwich}
For any integers $Q,k \geq 2$, there exists $\epsilon_{Q,k}>0$ such that for every $u\in \mathbb{R}^2$, there are at most four distinct elements $d \in \pm D_{{Q!},k}$ so that $\left|u \cdot d - \frac{a}{b} \right| < \epsilon_{Q,k}$ for some non-zero coprime integers $a$ and $b$ with $2\le b\le Q$. Moreover, the set of all such $d\in \pm D_{{Q!},k}$ is centrally symmetric.
\end{lemma}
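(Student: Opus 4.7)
Central symmetry is immediate: if $d$ satisfies $|u \cdot d - a/b| < \epsilon_{Q,k}$ for some witness $(a, b)$, then $-d$ satisfies it via the witness $(-a, b)$, which still consists of nonzero coprime integers with $2 \leq b \leq Q$. Hence the satisfying set is a union of antipodal pairs $\{d, -d\}$, and ``at most four elements'' is equivalent to ``at most two pairs''; so I must rule out the existence of three distinct indices $j_1 < j_2 < j_3$ in $\{1, \ldots, k-1\}$ each contributing such a pair, where $d_j$ denotes the $j$-th vector listed in the definition of $D_{Q!, k}$.

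Suppose for contradiction that such $j_1 < j_2 < j_3$ exist. Absorbing signs into the numerators, I may write $|u \cdot d_{j_i} - a_i/b_i| < \epsilon_{Q,k}$ with $a_i \neq 0$, $\gcd(a_i, b_i) = 1$, and $2 \leq b_i \leq Q$ for $i = 1, 2, 3$. The three vectors $d_{j_1}, d_{j_2}, d_{j_3} \in \mathbb{Z}^2$ satisfy the Pl\"ucker-type identity
\[
\Delta_{23}\, d_{j_1} - \Delta_{13}\, d_{j_2} + \Delta_{12}\, d_{j_3} = 0, \qquad \Delta_{lm} := \det[d_{j_l}, d_{j_m}].
\]
Taking the dot product with $u$, replacing each $u \cdot d_{j_i}$ by $a_i/b_i$, and clearing denominators by $b_1 b_2 b_3$ produces the integer
\[
S := b_2 b_3\, a_1 \Delta_{23} - b_1 b_3\, a_2 \Delta_{13} + b_1 b_2\, a_3 \Delta_{12}
\]
of magnitude at most $b_1 b_2 b_3 \bigl(|\Delta_{12}| + |\Delta_{13}| + |\Delta_{23}|\bigr)\, \epsilon_{Q,k}$, a quantity bounded in terms of $Q$ and $k$ alone. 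By choosing $\epsilon_{Q,k}$ sufficiently small, $|S| < 1$ and hence $S = 0$.

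The main task is then to show $S \neq 0$. I would use the explicit factorization, valid for $1 \leq m < n \leq k - 1$,
\[
\det[\tilde{d}_m, \tilde{d}_n] \;=\; (q^{n-m} - 1)\, q^{k - (n-m)}\, \bigl(2 q^{k+n} + 2 q^{k-m} + q^{n-m} + 1\bigr),
\]
where $\tilde{d}_j := (q^{k+j} - q^{k-j}, -q^j - 2q^k)$ so that $d_j = (X_{q,k}/Y_j)\, \tilde{d}_j$ with $Y_l := q^{k+l} + q^{k-l} + 1$. For any $l \in \{1, \ldots, k-1\}$ we have $Y_l \equiv 1 \pmod q$, so each $X_{q,k}/Y_j = \prod_{l \neq j} Y_l$ is coprime to $q$; the bracketed factor above is also $\equiv 1 \pmod q$; and $q^{n-m} - 1 \equiv -1 \pmod q$. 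Consequently, for every prime $p$ dividing $q = Q!$, setting $s_{lm} := |j_l - j_m|$,
\[
v_p(\Delta_{lm}) = (k - s_{lm})\, v_p(q).
\]

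To finish, pick any prime $p$ with $p \mid b_2$; this exists since $2 \leq b_2 \leq Q$ implies $b_2 \mid q$. Using $v_p(b_2) \geq 1$, $v_p(a_2) = 0$ (from $\gcd(a_2, b_2) = 1$), $v_p(b_i) \leq v_p(q)$ for each $i$, and the identity $s_{13} = s_{12} + s_{23}$, one checks that
\[
v_p(b_2 b_3 a_1 \Delta_{23}) - v_p(b_1 b_3 a_2 \Delta_{13}) \;\geq\; 1 + (s_{12} - 1)\, v_p(q) \;\geq\; 1,
\]
and symmetrically for the third term versus the middle one. Thus the middle term $b_1 b_3 a_2 \Delta_{13}$ has strictly smaller $p$-adic valuation than both other terms, giving $v_p(S) < \infty$ and hence $S \neq 0$, contradicting $S = 0$. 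The main obstacle is precisely this $p$-adic bookkeeping, which hinges on the explicit factorization of $\det[\tilde{d}_m, \tilde{d}_n]$ together with the observation that each $Y_l \equiv 1 \pmod q$.
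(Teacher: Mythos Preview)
Your proof is correct. Both you and the paper follow the same high-level strategy: find an integer linear relation among the three offending vectors, dot with $u$, and show that the resulting rational combination of the $a_i/b_i$ cannot vanish. The execution, however, is genuinely different. The paper works with an \emph{abstract} minimal integer relation $c_1P_{j_1}+c_2P_{j_2}+c_3P_{j_3}=0$ and, by reading the two coordinates of the $P_j$ modulo powers of $q$, deduces that $q\mid c_1$ and $q\mid c_3$, whence $\gcd(c_2,q)=1$ by minimality; this forces $c_1a_1/b_1+c_3a_3/b_3\in\mathbb{Z}$ while $c_2a_2/b_2\notin\mathbb{Z}$, so the sum has absolute value at least $1/b_2\geq 1/Q$. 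You instead take the \emph{explicit} Cramer/Pl\"ucker relation with coefficients $\Delta_{lm}=\det[d_{j_l},d_{j_m}]$, establish the closed-form factorization of $\det[\tilde d_m,\tilde d_n]$, read off the exact valuation $v_p(\Delta_{lm})=(k-s_{lm})\,v_p(q)$ for every $p\mid q$, and use $s_{13}=s_{12}+s_{23}>s_{12},s_{23}$ to see that the middle term of $S$ has strictly smallest $p$-adic valuation for any $p\mid b_2$. The paper's route is a bit shorter and sidesteps the determinant computation; yours is more self-contained (the factorization already shows $\Delta_{lm}\neq 0$, so Lemma~\ref{not scalar multiples} is not needed separately) and makes the admissible choice of $\epsilon_{Q,k}$ completely explicit.
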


\begin{proof}
    For convenience, we set $q=Q!$ so that $D_{q,k}=D_{{Q!},k}$. Recall that $D_{q,k}$ consists of $k-1$ points in $\mathbb{Z}^2$, where for each $j \in \{1,2,\ldots, k-1\}$, we have the point\[
    P_j=\frac{X_{q,k}}{q^{k+j} +  q^{k-j}   +1} \left(
      q^{k+j}-q^{k-j} ,
      q^{j} + 2q^{k}
    \right).
    \] 

    \noindent Now we define $\epsilon_{Q,k}$. Since no two points in $D_{q,k}$ are scalar multiples of each other by Lemma~\ref{not scalar multiples}, it follows that for any triple of distinct $j_1, j_2, j_3 \in \{1,2, \ldots, k-1\}$, there exist non-zero integers $c_1,c_2,c_3$ such that $c_1P_{j_1} + c_2P_{j_2} + c_3P_{j_3} = 0$. For each such triple, let $m_{j_1, j_2, j_3}$ denote the minimum of $|c_1| + |c_2| + |c_3|$ over all such $c_1,c_2,c_3$. Then, finally, define $\epsilon_{Q,k}$ to be the minimum, over all such triples $j_1, j_2, j_3$, of the number $(Qm_{j_1, j_2, j_3})^{-1}$. So we have $\epsilon_{Q,k}>0$.
    
	Now fix $u \in \mathbb{R}^2$.
	Note that if $\left|u \cdot d - \frac{a}{b} \right| < \epsilon_{Q,k}$, then $\left|u \cdot (-d) - \frac{(-a)}{b} \right| < \epsilon_{Q,k}$.
	So it is enough to show that there are at most two distinct elements $d\in D_{q,k}$ so that $u \cdot d$ is in the interval $\left(\frac{a}{b} - \epsilon_{Q,k} , \frac{a}{b} + \epsilon_{Q,k}\right)$ for some non-zero coprime integers $a$ and $b$ with $2\le b \le Q$.
	Going for a contradiction, suppose otherwise. Then there exist distinct $j_1, j_2, j_3 \in \{1,2,\ldots, k-1\}$ such that for each $i \in \{1,2,3\}$, there are non-zero coprime integers $a_i$ and $b_i$ with $2\le b_i \le Q$ and
	\[
	\left| u \cdot P_{j_i} -  \frac{a_i}{b_i} \right| 
	< \epsilon_{Q,k}.
	\] 
    \noindent We may assume without loss of generality that $j_1<j_2<j_3$.

    Let $c_1, c_2, c_3$ be non-negative integers which achieve the minimum $m_{j_1, j_2, j_3}$, that is, so that $c_1P_{j_1} + c_2P_{j_2} + c_3P_{j_3} = 0$ and $m_{j_1, j_2, j_3}=|c_1| + |c_2| + |c_3|$. Since the first sum is $0$, by looking at the first coordinates of $c_1P_{j_1}$, $c_2P_{j_2}$, and $c_3P_{j_3}$, we have
    \begin{align*} 
        0 &=  X_{q,k}\left[
        c_1 \left( \frac{q^{k+j_1}-q^{k-j_1}}{q^{k+j_1}+q^{k-j_1}+1}\right)
    	+
    	c_2 \left( \frac{q^{k+j_2}-q^{k-j_2}}{q^{k+j_2}+q^{k-j_2}+1} \right)
    	+
    	c_3 \left( \frac{q^{k+j_3}-q^{k-j_3}}{q^{k+j_3}+q^{k-j_3}+1}\right)\right].
    \end{align*}
    \noindent Recall that $X_{q,k} = \Pi_{j=1}^{k-1}(q^{k+j}+q^{k-j}+1)$. Observe that $q$ does not divide any of the terms in this product, but each of the three denominators above does. Also note that since $j_1<j_2$, we have that $q^{k-{j_2}}$ divides both $q^{k+j_1}-q^{k-j_1}$ and $q^{k+j_2}-q^{k-j_2}$. Considering the equation above mod$(q^{k-{j_2}})$, it follows that $q^{k-{j_2}}$ divides $c_3(q^{k+j_3}-q^{k-j_3})$. Thus $q$ divides $c_3$, since $j_2<j_3$.

    Similarly, but now using the second coordinates of the points, we find that there exist integers $x_1, x_2, x_3$, none of which are divisible by $q$, so that
    \begin{align*}
        0 =
        x_1c_1 \left(q^{j_1} + 2q^{k} \right)
    	+
    	x_2c_2 \left(q^{j_2} + 2q^{k} \right)
    	+
    	x_3c_3 \left(q^{j_3} + 2q^{k} \right).
    \end{align*}
    \noindent Since $j_1 < j_2 < j_3$, we find that $q^{j_2}$ divides $q^{j_2} + 2q^{k}$ and $q^{j_3} + 2q^{k}$ but not $q^{j_1} + 2q^{k}$. Thus $q$ divides $c_1$ as well. By the minimality of $|c_1|+|c_2|+|c_3|$, we have that $\gcd(c_1,c_2,c_3)=1$. So, it now follows that $c_{2}$ is coprime to $q$.
	
	Since $q=Q!$ divides both $c_1$ and $c_3$, and since $b_1,b_3\le Q$, we have that $c_1\frac{a_1}{b_1}   +  c_3\frac{a_3}{b_3}$ is an integer. Since $b_2$ divides $Q!$, and since $c_2$ is coprime to $q=Q!$, it follows that $c_2$ is coprime to $b_2$. So, since $a_2$ and $b_2$ are coprime, $c_2a_2$ is coprime to $b_2$. It follows that $a=b_2\left(c_1\frac{a_1}{b_1} + c_2\frac{a_2}{b_2}  +  c_3\frac{a_3}{b_3}\right)$ is an integer that is coprime to $b_2$. Since $b_2 \geq 2$, this in particular means that $a \neq 0$.
	Thus,
	\[
	\left|u \cdot \left( c_1P_{j_1} + c_2P_{j_2} + c_3P_{j_3} \right) - \left( c_1\frac{a_1}{b_1} + c_2\frac{a_2}{b_2}  +  c_3\frac{a_3}{b_3} \right) \right|
	=
	\left|c_1\frac{a_1}{b_1} + c_2\frac{a_2}{b_2}  +  c_3\frac{a_3}{b_3} \right|
	=
	\left|
	\frac{a}{b_2}
	\right|
	\ge \frac{1}{Q}.
	\]
	However, this contradicts the fact that
	\begin{align*}
	    &   \left|u \cdot \left( c_1P_{j_1} + c_2P_{j_2} + c_3P_{j_3} \right) - \left( c_1\frac{a_1}{b_1} + c_2\frac{a_2}{b_2}  +  c_3\frac{a_3}{b_3} \right) \right|
	    \\
	    & \qquad \qquad \qquad \le
	    c_1 \left|
	u \cdot P_{j_1} - \frac{a_1}{b_1}
	\right|
	+
	c_2 \left|
	u \cdot P_{j_2} - \frac{a_2}{b_2}
	\right|
	+
	c_3 \left|
	u \cdot P_{j_3} - \frac{a_3}{b_3}
	\right|
	\\
	& \qquad \qquad \qquad <
	(|c_1|+|c_2|+|c_3|) \epsilon_{Q,k}
	\\
	& \qquad \qquad \qquad \le 
	\frac{1}{Q}.
	\end{align*}
	Hence, there are at most two such elements $d\in D_{Q!,k}$, as desired.
\end{proof}

\section{Polynomial distances}\label{sec:poly}

In this section we shall prove Theorem~\ref{poly dist}. 

We will use the following slightly simplified version of Vinogradov's estimate for exponential sums with polynomial exponents from~\cite{vinogradov1984method}. For convenience, given an integer $r\geq 3$, we set $\rho_r = (8r^2(\ln r +   1.5 \ln \ln r + 4.2))^{-1}$. Thus $\rho_r$ is in the interval $(0,1)$.

\begin{theorem}[Vinogradov {\cite[page 14]{vinogradov1984method}}]\label{poly estimate}
    For any $\epsilon >0$ and polynomial $f$ of degree $r\ge 3$ with integer coefficients, there exists a real number $c(f, \epsilon)>0$ satisfying the following. Let $\alpha \in \mathbb{R}$ and $N,M\in \mathbb{N}$. If there exist coprime integers $a$ and $b$ with $1 \le b\le N^{\frac{1}{r}}$ and $\left| \alpha - \frac{a}{b} \right| \le N^{-r+\frac{1}{r}}$, then
\begin{align}
\label{polyEst:First}
\left| \sum_{j=1}^N e(\alpha f(j+M)) \right| \le c(f, \epsilon) N b^{-\frac{1}{r}+ \epsilon},
\end{align}
 and otherwise
\begin{align}
\label{polyEst:Second}
\left |\sum_{j=1}^N e(\alpha f(j+M)) \right| \le c(f, \epsilon) N^{1- {\rho_r}}.
\end{align}
\end{theorem}
\noindent We note that the constant $c(f, \epsilon)$ actually only depends on the leading coefficient of $f$, however the statement above is all we will need.

Now, note that it is enough to prove Theorem~\ref{poly dist} for polynomials $f$ whose degree is at least~$3$, since otherwise we can consider the polynomial $f(x^3)$. Similarly, it is enough to prove this theorem for polynomials $f$ which are positive and strictly increasing on $[0,\infty)$, because otherwise we can consider the polynomial $f(x+M)$ for a sufficiently large integer $M$. So, for the rest of this section, we fix a polynomial $f=a_rx^r+\ldots+a_1x+a_0$ with integer coefficients so that $r \ge 3$, $a_r \ge 1$, and $f$ is positive and strictly increasing on $[0,\infty)$. 

Our strategy is as follows. Let us fix integers $Q,k\ge 2$. For each positive integer $N$, we set $A_N=\{f(1), f(2), \ldots, f(N)\}$. In the next paragraph, we will define a centrally symmetric weight function $w_N$ on the set $\pm A_ND_{Q!, k}$ (which is also centrally symmetric). Then we will apply Theorem~\ref{ratio bound} to the Cayley graph $G(\mathbb{Z}^2, \pm A_ND_{Q!, k})$ with weights $w_N$. In this manner, we will be able to bound the independence density of $G(\mathbb{Z}^2, \pm A_ND_{Q!, k})$. Then, by considering larger and larger $N$, we will be able to bound the independence density of the Cayley graph $G(\mathbb{Z}^2, \pm AD_{Q!, k})$, where $A = f(\mathbb{N})$. This will be enough since, by Lemma~\ref{embed}, the Cayley graph $G(\mathbb{Z}^2, \pm AD_{Q!, k})$ is a subgraph of the desired distance graph of the plane.

If $Q$ and $k$ are clear from context, then for each $j \in \{1,2,\ldots, N\}$ and $d \in \pm D_{Q!, k}$, we write \begin{align*}
w_N(f(j)d) = \frac{f'(j) \left(  1 - \frac{f(j)}{f(N)}  \right)}{\int_{0}^{N} f'(t) \left(  1 - \frac{f(t)}{f(N)}  \right) \,dt},
\end{align*}
\noindent where $f'$ denotes the derivative of $f$. Note that $w_N$ is well-defined since $f$ is positive and increasing on $[0,\infty)$, and since no two elements in $D_{Q!, k}$ are scalar multiples by Lemma~\ref{not scalar multiples}. Moreover, $w_N$ is centrally symmetric and the value of $w_N(f(j)d)$ only depends on~$j$. For this reason and for other upcoming calculations, it is convenient to define a similar function on $\mathbb{R}$. Thus, for any $j \in \mathbb{R}$, we write \begin{align*}\wFunc{j} = \frac{f'(j) \left(  1 - \frac{f(j)}{f(N)}  \right)}{\int_{0}^{N} f'(t) \left(  1 - \frac{f(t)}{f(N)}  \right) \,\dt}.\end{align*}

Recall that, in order to apply Theorem~\ref{ratio bound}, we need to bound the infimum and the supremum of the function $\widehat{w}_N$ whose value, given $u\in \mathbb{R}^2$, is the sum over all $x \in \pm A_ND_{Q!, k}$ of $w_N(x)e(u\cdot x)$. Re-writing this sum, we have\begin{align*}
 \widehat{w}_N(u) = \sum_{d \in \pm D_{Q!, k}}\sum_{j=1}^N g_N(j)e(u\cdot f(j)d).
\end{align*}
\noindent We frequently bound $\widehat{w}_N$ by fixing $d$ and considering the inner sum, which is of the form $\sum_{j=1}^N g_N(j)e(\alpha f(j))$ for some real number $\alpha$. This is the connection to Vinogradov's Theorem. Another technique we often use is to show that a sum is close to a corresponding integral. For this method we use the following bound on the derivative of $g_N$. 

\begin{lemma}
\label{lem:polyDer}
For any fixed $\epsilon>0$, \begin{align*}
\lim_{N \rightarrow \infty} N^{2-\epsilon} \sup_{x \in [0,N]} |g_N'(x)| = 0.
\end{align*}
\end{lemma}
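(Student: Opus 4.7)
The plan is to exploit the simple product structure of $g_N$. Writing $Z_N := \int_0^N f'(t)\bigl(1 - f(t)/f(N)\bigr)\,\dt$ for the ($x$-independent) normalizing denominator and $h_N(x) := f'(x)\bigl(1 - f(x)/f(N)\bigr)$ for the numerator, we have $g_N(x) = h_N(x)/Z_N$ and therefore $g_N'(x) = h_N'(x)/Z_N$. So the task reduces to proving that $\sup_{x \in [0,N]} |h_N'(x)|$ grows like $N^{r-2}$ up to constants, while $Z_N$ grows like $N^r$; the ratio is then $O(N^{-2})$ and we win with room to spare.

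For the denominator I would substitute $u = f(t)$ (valid since $f$ is strictly increasing and positive on $[0,\infty)$), giving $Z_N = \int_{f(0)}^{f(N)} (1 - u/f(N))\,du = f(N)/2 - f(0) + f(0)^2/(2f(N))$. Because $f$ has degree $r$ with leading coefficient $a_r \ge 1$, this evaluates to $(a_r/2 + o(1)) N^r$.

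For the numerator, differentiating gives
\[
h_N'(x) = f''(x)\left(1 - \frac{f(x)}{f(N)}\right) - \frac{f'(x)^2}{f(N)}.
\]
On $[0,N]$ the factor $1 - f(x)/f(N)$ stays in $[0,1]$, and standard polynomial bounds give $|f''(x)| = O(N^{r-2})$ and $f'(x)^2 = O(N^{2r-2})$. Dividing the latter by $f(N) \asymp N^r$ yields $O(N^{r-2})$ as well. Hence each of the two terms is $O(N^{r-2})$, and $|h_N'(x)| = O(N^{r-2})$ uniformly in $x \in [0,N]$.

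Combining the two estimates, $\sup_{x \in [0,N]} |g_N'(x)| = O(N^{r-2})/ \bigl((a_r/2+o(1))N^r\bigr) = O(N^{-2})$, so $N^{2-\epsilon} \sup_{x \in [0,N]} |g_N'(x)| = O(N^{-\epsilon}) \to 0$. The whole argument is routine polynomial calculus applied to the explicit formula for $g_N$; I do not anticipate any substantive obstacle, only some care that the implicit constants depend on $f$ alone and not on $N$ or $x$.
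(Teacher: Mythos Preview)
Your argument is correct and is essentially the same as the paper's: both compute $g_N'$ as (numerator derivative)/(denominator), bound the numerator derivative by $O(N^{r-2})$ term-by-term using the same three pieces $f''(x)$, $f''(x)f(x)/f(N)$, and $f'(x)^2/f(N)$, and observe that the denominator is $\Omega(N^r)$. Your explicit evaluation of $Z_N$ via the substitution $u=f(t)$ is a slightly more detailed version of the paper's remark that ``the integral has order $\Omega(N^r)$,'' but the structure and the estimates are identical.
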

\begin{proof} First, note that the derivative evaluates to\begin{align*}
g_N'(x)= \frac{f''(x)\left(1-\frac{f(x)}{f(N)}\right)+f'(x)\left(-\frac{f'(x)}{f(N)}\right)}{\int_{0}^{N} f'(t) \left(  1 - \frac{f(t)}{f(N)}  \right) \,\dt} = \frac{f''(x)-\frac{f''(x)f(x)}{f(N)}-\frac{f'(x)f'(x)}{f(N)}}{\int_{0}^{N} f'(t) \left(  1 - \frac{f(t)}{f(N)}  \right) \,\dt}.
\end{align*}For the numerator, the following inequality holds when $N$ is sufficiently large since $f$ is a polynomial with positive leading coefficient: \begin{align*}
\sup_{x \in [0,N]}\left| f''(x)-\frac{f''(x)f(x)}{f(N)}-\frac{f'(x)f'(x)}{f(N)}\right| \leq f''(N)+f''(N)+f'(N)^2/f(N) = \mathcal{O}(N^{r-2}).
\end{align*}Moreover, the integral in the denominator of $g_N'(x)$ is of order $\Omega(N^r)$. 
Putting these things together, we have that $\sup_{x \in [0,N]} |g_N'(x)|= \mathcal{O}\left(1/N^2 \right)$, and the lemma follows.
\end{proof}

Now we use Lemma~\ref{lem:polyDer} to evaluate the following useful sum.

\begin{lemma}
\label{lem:avgPoly} 
We have\begin{align*}\lim_{N \rightarrow \infty} \sum_{j=1}^N g_N(j)=1.
\end{align*}
\end{lemma}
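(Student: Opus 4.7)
The plan is to compare the sum $\sum_{j=1}^N g_N(j)$ to the integral $\int_0^N g_N(t)\,\dt$, and to observe that by the very definition of $g_N$, this integral is identically $1$ for every $N$. Indeed,
\[
\int_0^N g_N(t)\,\dt \;=\; \frac{\int_0^N f'(t)\left(1 - \frac{f(t)}{f(N)}\right)\dt}{\int_0^N f'(t)\left(1 - \frac{f(t)}{f(N)}\right)\dt} \;=\; 1,
\]
so it will suffice to show that the sum and the integral differ by an amount tending to $0$.

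For this, I would use a standard Riemann-sum error estimate. On each interval $[j-1,j]$ with $j \in \{1,\dots,N\}$, the mean value theorem gives $|g_N(j) - g_N(t)| \le \sup_{x \in [0,N]} |g_N'(x)|$ for every $t \in [j-1,j]$, and hence
\[
\left| g_N(j) - \int_{j-1}^{j} g_N(t)\,\dt \right| \;\le\; \sup_{x \in [0,N]} |g_N'(x)|.
\]
Summing over $j$ from $1$ to $N$ and using the triangle inequality yields
\[
\left| \sum_{j=1}^N g_N(j) - \int_0^N g_N(t)\,\dt \right| \;\le\; N \cdot \sup_{x \in [0,N]} |g_N'(x)|.
\]

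Finally, I would invoke Lemma~\ref{lem:polyDer} with any fixed $\epsilon \in (0,1)$ (for concreteness, say $\epsilon = 1/2$): this gives $N^{3/2} \sup_{x \in [0,N]}|g_N'(x)| \to 0$, which in particular implies $N \cdot \sup_{x \in [0,N]}|g_N'(x)| \to 0$. Combining with the fact that $\int_0^N g_N(t)\,\dt = 1$ for all $N$, we conclude $\lim_{N \to \infty} \sum_{j=1}^N g_N(j) = 1$, as required. No step here is truly an obstacle; the main thing is to notice that the normalization in the definition of $g_N$ is precisely an integral, so the integral reference value is trivially $1$, and the previous lemma is tailored exactly to control the Riemann-sum discretization error.
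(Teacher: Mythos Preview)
Your proof is correct and follows essentially the same approach as the paper: both observe that $\int_0^N g_N(t)\,\dt = 1$ by definition, bound the difference between the sum and the integral by $N\sup_{x\in[0,N]}|g_N'(x)|$ via a standard Riemann-sum estimate, and conclude using Lemma~\ref{lem:polyDer}. The only differences are cosmetic --- you spell out the mean value theorem step and specify a particular $\epsilon$, whereas the paper's version is slightly terser.
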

\begin{proof}
    Observe that for every $N \in \mathbb{N}$, we have $\int_{0}^{N} g_N(t)\,\dt=1$. Moreover,\begin{align*}
        \lim_{N \rightarrow \infty} \left|\sum_{j=1}^N g_N(j) -\int_{0}^{N} g_N(t)\,\dt\right|\le \lim_{N \rightarrow \infty}\sum_{j=1}^N \sup_{x\in[j-1,j]}\left|g_N'(x) \right| \le \lim_{N\rightarrow \infty}N\sup_{x\in [0,N]}|g_N'(x)|=0
    \end{align*}by Lemma~\ref{lem:polyDer}. It follows that $\lim_{N \rightarrow \infty} \sum_{j=1}^N g_N(j)=1$, as desired.
\end{proof}

We are ready to compute the supremum we are interested in.

\begin{lemma}
\label{lem:supPoly}
For any fixed integers $Q,k \geq 2$, \begin{align*}\lim_{N \rightarrow \infty} \sup_{u \in \mathbb{R}^2} \widehat{w}_N(u) = 2k-2.
\end{align*}
\end{lemma}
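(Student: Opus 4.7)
The plan is to observe that the supremum is attained at $u = 0$ and reduce everything to counting. Recall the author noted in the preliminaries that whenever $C$ is centrally symmetric and $w : C \to \mathbb{R}_{\ge 0}$ is centrally symmetric, $\widehat{w}$ is real-valued and attains its supremum at the origin with value $\sum_{x \in C} w(x)$. The generating set $\pm A_N D_{Q!,k}$ is centrally symmetric by construction, and $w_N$ is centrally symmetric because $w_N(f(j)d)= g_N(j)$ depends only on $j$. Hence
\[
\sup_{u \in \mathbb{R}^2} \widehat{w}_N(u) \;=\; \widehat{w}_N(0) \;=\; \sum_{x \in \pm A_N D_{Q!,k}} w_N(x).
\]

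Next, I would evaluate this sum by showing that the natural indexing
\[
\{+1,-1\} \times D_{Q!,k} \times \{1,2,\ldots,N\} \;\longrightarrow\; \pm A_N D_{Q!,k}, \qquad (\epsilon, d, j) \mapsto \epsilon f(j) d
\]
is a bijection. Injectivity follows from Lemma~\ref{not scalar multiples}: if $\epsilon_1 f(j_1) d_1 = \epsilon_2 f(j_2) d_2$, then $d_1$ and $d_2$ are scalar multiples, so $d_1 = d_2$, whereupon the positivity of $f$ on $[1,N]$ forces $\epsilon_1 = \epsilon_2$ and $j_1 = j_2$. Surjectivity is immediate from the definition of $\pm A_N D_{Q!,k}$. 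Since $w_N(\epsilon f(j) d) = g_N(j)$ for all $\epsilon \in \{+1,-1\}$ and $d \in D_{Q!,k}$, the sum factors as
\[
\sum_{x \in \pm A_N D_{Q!,k}} w_N(x) \;=\; 2 \cdot |D_{Q!,k}| \cdot \sum_{j=1}^N g_N(j) \;=\; (2k-2) \sum_{j=1}^N g_N(j).
\]

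Finally, I would apply Lemma~\ref{lem:avgPoly}, which gives $\sum_{j=1}^N g_N(j) \to 1$ as $N \to \infty$. Combining this with the displayed identity yields $\sup_u \widehat{w}_N(u) \to 2k-2$, as required. There is no real obstacle here: once one notes that the supremum is trivially at $u=0$, the statement becomes a cardinality count of $\pm D_{Q!,k}$ (which is where Lemma~\ref{not scalar multiples} is used) combined with the already-established $L^1$-normalization of $g_N$.
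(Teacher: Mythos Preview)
Your argument is correct and follows essentially the same approach as the paper: both note that the supremum is attained at $u=0$, rewrite $\widehat{w}_N(0)$ as $|\pm D_{Q!,k}|\sum_{j=1}^N g_N(j)=(2k-2)\sum_{j=1}^N g_N(j)$ using Lemma~\ref{not scalar multiples}, and conclude via Lemma~\ref{lem:avgPoly}. The only minor imprecision is that ``positivity of $f$'' alone gives $\epsilon_1=\epsilon_2$, while $j_1=j_2$ requires the assumed strict monotonicity of $f$ on $[0,\infty)$.
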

\begin{proof}
    Recall from Section~\ref{sec:lattices} that the supremum is attained at $\widehat{w}_N(0)=\sum_{d \in \pm D_{Q!, k}}\sum_{j=1}^N g_N(j)$. Thus we are done by Lemma~\ref{lem:avgPoly} and the fact that $|\pm D_{Q!, k}|=2|D_{Q!,k}|=2(k-1)$.
\end{proof}

To evaluate the infimum, we will need different techniques depending on whether, informally, $u \cdot d$ is ``near'' an integer, a rational with a small denominator, or neither. So examining these exponential sums splits into three cases given by so called ``integral'', ``major'', and ``minor'' arcs. We note that estimating exponential sums by considering such arcs is common in areas such as analytic number theory; see \cite{nathanson1996additive}, for instance. 

So, let $N$ be a positive integer, and recall that $r$ is the degree of our polynomial $f$. For every $a\in \mathbb{Z}$, we define the \emph{integral arc} $\mathcal{I}_{N}(a) = [a-N^{-r+\frac{1}{r}}, a +N^{-r+\frac{1}{r}}]$, where this notation denotes a closed interval of $\mathbb{R}$.
We write $\mathcal{I}_{N} = \bigcup_{a\in \mathbb{Z}} \mathcal{I}_{N}(a)$ for the union of all integral arcs.
Now, let $Q\ge 2$ be an integer.
For every pair of non-zero coprime integers $a$ and $b$ with $2\le b \le Q$, we define the \emph{major arc}
\[
\mathfrak{M}_{N,Q}(a,b) = \left[\frac{a}{b}-N^{-r+\frac{1}{r}}, \frac{a}{b} + N^{-r + \frac{1}{r}} \right].
\]
Like before, we write $\mathfrak{M}_{N,Q}$ for the union of the above major arcs. We call any interval of $\mathbb{R}$ which is disjoint from $\mathcal{I}_{N}$ and $\mathfrak{M}_{N,Q}$ a \emph{minor arc}, and we write $\mathfrak{m}_{N,Q}= \mathbb{R} \backslash (\mathcal{I}_{N} \cup \mathfrak{M}_{N,Q})$.

First we use Lemma~\ref{sandwich} to handle the exponential sums within the major arcs.

\begin{lemma}
\label{lem:polyMajor} For any fixed integers $Q, k \geq 2$,\begin{align*}
    \liminf_{N \rightarrow \infty} \inf_{u \in \mathbb{R}^2} \sum_{\substack{d \in \pm D_{Q!, k}\\u \cdot d \in \mathfrak{M}_{N,Q}}}\sum_{j=1}^N g_N(j)e(u \cdot f(j)d) \geq -4.
\end{align*}
\end{lemma}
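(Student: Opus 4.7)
\medskip

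The plan is to use Lemma~\ref{sandwich} to control how many $d \in \pm D_{Q!,k}$ can contribute to the outer sum, and then apply only the trivial estimate $|e(\alpha)| = 1$ on each inner sum, combined with Lemma~\ref{lem:avgPoly}.

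First I would fix $u \in \mathbb{R}^2$ and let $S_u = \{d \in \pm D_{Q!,k} : u \cdot d \in \mathfrak{M}_{N,Q}\}$. Since $r \geq 3$ we have $-r + \frac{1}{r} < 0$, so for all sufficiently large $N$ (depending only on $Q$ and $k$), $N^{-r+\frac{1}{r}} < \epsilon_{Q,k}$. For such $N$, every $d \in S_u$ satisfies $|u \cdot d - a/b| < \epsilon_{Q,k}$ for some non-zero coprime integers $a,b$ with $2 \le b \le Q$, so Lemma~\ref{sandwich} applies and yields $|S_u| \le 4$ and that $S_u$ is centrally symmetric.

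Next I would observe that central symmetry of $S_u$ lets us pair each $d \in S_u$ with $-d \in S_u$. Since $e(u \cdot f(j)(-d)) = \overline{e(u \cdot f(j) d)}$, the contribution of a pair $\{d, -d\}$ to the sum is
\[
2\sum_{j=1}^N g_N(j)\cos\bigl(2\pi\, u \cdot f(j) d\bigr),
\]
which is a real number whose absolute value is bounded by $2\sum_{j=1}^N g_N(j)$. With at most two such pairs in $S_u$, the full sum over $d \in S_u$ is real and bounded below by $-4\sum_{j=1}^N g_N(j)$.

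Finally, I would let $N \to \infty$ and invoke Lemma~\ref{lem:avgPoly} to conclude that this lower bound tends to $-4$, giving
\[
\liminf_{N \to \infty} \inf_{u \in \mathbb{R}^2} \sum_{\substack{d \in \pm D_{Q!, k}\\u \cdot d \in \mathfrak{M}_{N,Q}}}\sum_{j=1}^N g_N(j)e(u \cdot f(j)d) \ge \lim_{N \to \infty}\left(-4\sum_{j=1}^N g_N(j)\right) = -4.
\]
There is no real obstacle here beyond noticing that the partial sum restricted to the major-arc contributions is still real (which is exactly the purpose of the ``moreover'' clause in Lemma~\ref{sandwich}); once that is in hand the bound is entirely trivial and no estimation of the oscillatory inner sum is needed. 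The more delicate work will be on the minor arcs, where Vinogradov's estimate (Theorem~\ref{poly estimate}) will actually do something.
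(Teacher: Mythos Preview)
Your proof is correct and follows essentially the same route as the paper: apply Lemma~\ref{sandwich} to bound the number of contributing $d$ by four (once $N^{-r+\frac{1}{r}}<\epsilon_{Q,k}$), use the central symmetry to pair $d$ with $-d$ so each pair contributes $2\sum_j g_N(j)\cos(2\pi u\cdot f(j)d)\ge -2\sum_j g_N(j)$, and finish with Lemma~\ref{lem:avgPoly}. The paper's write-up is terser but the argument is identical.
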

\begin{proof}
    By Lemma~\ref{sandwich}, there exists $\epsilon_{Q,k}>0$ such that for every $u\in \mathbb{R}^2$, there are at most four elements $d\in \pm D_{Q!,k}$ such that $\left|u \cdot d - \frac{a}{b} \right| < \epsilon_{Q,k}$ for some non-zero coprime integers $a$ and $b$ with $2\le b\le Q$. Thus, for any sufficiently large integer $N$, there are at most four elements $d\in \pm D_{Q!,k}$ such that $u \cdot d \in \mathfrak{M}_{N,Q}$. Moreover, again by Lemma~\ref{sandwich}, the set of all such $d$ is centrally symmetric. The lemma then follows from the fact that $e(\alpha)+e(-\alpha)=2\cos(2\pi\alpha)\geq -2$ for any real number $\alpha$, and the fact that $\lim_{N \rightarrow \infty}\sum_{j=1}^N g_N(j) =1$ by Lemma~\ref{lem:avgPoly}.
\end{proof}

Next we handle the exponential sums within the integral arcs.
\begin{lemma}\label{poly zero}
    We have that
    \[
    \liminf_{N \to \infty} \inf_{\alpha\in \mathcal{I}_{N}} \sum_{j=1}^N \wFunc{j} \left[e(\alpha f(j)) + e(-\alpha f(j))\right] \ge 0.
    \]
\end{lemma}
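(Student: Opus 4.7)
The plan is to exploit the fact that the weight function $g_N$ was engineered so that, after a suitable change of variables, the sum turns into an integral against the (nonnegative) Fej\'er kernel. Since $f$ has integer coefficients, $f(j) \in \mathbb{Z}$ for every integer $j$, so if $\alpha = a + \beta$ with $a \in \mathbb{Z}$, then $e(\alpha f(j)) = e(\beta f(j))$. Thus it is enough to show that
\[
S_N(\beta) := \sum_{j=1}^N g_N(j)\bigl[e(\beta f(j)) + e(-\beta f(j))\bigr]
\]
satisfies $S_N(\beta) \ge -o(1)$ as $N \to \infty$, uniformly over $|\beta| \le N^{-r+1/r}$.

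First, I would replace the sum by the corresponding integral
\[
I_N(\beta) := \int_0^N g_N(t)\bigl[e(\beta f(t)) + e(-\beta f(t))\bigr]\, \dt.
\]
A standard sum-versus-integral estimate bounds the error by $O\bigl(N \sup_{x \in [0,N]} |g_N'(x)|\bigr) + O\bigl(|\beta| \int_0^N g_N(t) f'(t)\, \dt\bigr)$. The first term is $o(1)$ by Lemma~\ref{lem:polyDer}. For the second, one checks that $\int_0^N g_N(t) f'(t)\, \dt = O(N^{r-1})$ (the numerator in the integrand has order $N^{2r-1}$ while the normalizing constant is of order $N^{r}$), so multiplying by $|\beta| \le N^{-r+1/r}$ yields an $O(N^{-1+1/r})$ error that is uniform in $\beta$.

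Next I would change variables via $u = f(t)/f(N)$ in $I_N(\beta)$. Writing $\gamma := \beta f(N)$ and $Z_N := \int_0^N f'(t)(1 - f(t)/f(N))\, \dt = \tfrac{1}{2} f(N)(1 - f(0)/f(N))^2$, we obtain
\[
I_N(\beta) = \frac{f(N)}{Z_N} \int_{f(0)/f(N)}^{1} (1-u)\bigl[e(\gamma u) + e(-\gamma u)\bigr]\, du.
\]
The lower limit can be moved to $0$ at the cost of an error of absolute value at most $4 f(0)/f(N) \to 0$ (uniformly in $\gamma$), and $f(N)/Z_N \to 2$. The resulting integral is the classical Fej\'er kernel:
\[
\int_{-1}^{1} (1-|u|)\, e(\gamma u)\, du = \left(\frac{\sin(\pi \gamma)}{\pi \gamma}\right)^{2} \ge 0.
\]
Hence $I_N(\beta) \ge -o(1)$ uniformly, and combining with the sum-to-integral step gives $S_N(\beta) \ge -o(1)$ uniformly. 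Taking $\liminf_{N \to \infty}$ yields the claim.

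The main obstacle is keeping every estimate uniform in $\beta$ across the whole interval $[-N^{-r+1/r}, N^{-r+1/r}]$; note in particular that $\gamma = \beta f(N)$ can be as large as $\Theta(N^{1/r})$, so one cannot rely on a naive Taylor expansion of the exponentials and the Fej\'er identity really is needed. The narrow width of $\mathcal{I}_N$ and the specific shape of $g_N$ are calibrated precisely so that both the sum-to-integral passage and the tail correction in the change of variables are negligible, leaving only the manifestly nonnegative Fej\'er kernel.
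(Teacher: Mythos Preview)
Your proposal is correct and follows essentially the same route as the paper: reduce to $|\beta|\le N^{-r+1/r}$ by periodicity, pass from the sum to the integral using Lemma~\ref{lem:polyDer} together with the bound $|\beta|\sup_{[0,N]} g_N f' = O(N^{-1+1/r})$, change variables $u=f(t)$ (the paper) or $u=f(t)/f(N)$ (you), absorb the $f(0)$ tail as $o(1)$, and evaluate the remaining integral. The only cosmetic difference is that the paper computes $\int_0^{f(N)}(1-u/f(N))\cos(2\pi\alpha u)\,du = \frac{1-\cos(2\pi\alpha f(N))}{(2\pi\alpha)^2 f(N)}\ge 0$ directly, whereas you recognize this as the Fej\'er kernel $\bigl(\sin(\pi\gamma)/(\pi\gamma)\bigr)^2$; these are of course the same identity.
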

\begin{proof}
It is convenient to write $e(\alpha f(j)) + e(-\alpha f(j))=2\cos(2\pi\alpha f(j))$. Note that this value is the same for $\alpha$ and $\alpha+1$, that is, that $\cos(2\pi\alpha f(j))=\cos(2\pi(\alpha+1) f(j))$ for any $\alpha \in \mathbb{R}$. So it makes no difference to take the infimum over all $\alpha \in \mathcal{I}_{N}(0)$ instead of over all $\alpha \in \mathcal{I}_{N}$. Consequently, it is enough to prove that $\liminf_{N \to \infty} \inf_{\alpha\in {\mathcal{I}_{N}(0)}} \sum_{j=1}^N \wFunc{j} \cos(2\pi\alpha f(j)) \geq 0$. 

We now prove equation~(\ref{eqnPoly}) below. After that, we will evaluate its right-hand side.
\begin{align}
\label{eqnPoly}
    \liminf_{N \to \infty} \inf_{\alpha\in {\mathcal{I}_{N}(0)}} \sum_{j=1}^N \wFunc{j} \cos(2\pi\alpha f(j))
    = 
 \liminf_{N \to \infty} \inf_{\alpha\in \mathcal{I}_{N}(0)}
\int_{0}^{N} g_N(t) \cos(2\pi\alpha f(t))\,\dt
\end{align}
\noindent Given $\alpha \in \mathbb{R}$ and $N \in \mathbb{N}$, we write $h_{N, \alpha}$ for the function where $h_{N, \alpha}(x)=g_N(x) \cos(2\pi\alpha f(x))$. First, note that for any $\alpha \in \mathbb{R}$ and $N \in \mathbb{N}$, we have\begin{align*}
\left|\sum_{j=1}^N  h_{N, \alpha}(j) - \int_{0}^N  h_{N, \alpha}(t)\,\dt \right| &\le
\sup_{x \in [0,N]}N\left|h_{N, \alpha}'(x)\right|\\
&\le \sup_{x \in [0,N]}N\left|g_{N}'(x)\right|+\sup_{x \in [0,N]}N\left|g_{N}(x)2 \pi\alpha f'(x)\right|.\end{align*}Moreover, $\lim_{N \rightarrow \infty}\sup_{x \in [0,N]}N\left|g_{N}'(x)\right|=0$ by Lemma~\ref{lem:polyDer}. Thus equation~(\ref{eqnPoly}) follows from the fact that, again using Lemma~\ref{lem:polyDer} for the last line,\begin{align*}
\lim_{N \to \infty} \sup_{\alpha\in {\mathcal{I}_{N}(0)}}\sup_{x \in [0,N]}N|g_N(x)2\pi\alpha f'(x)|&=\lim_{N \to \infty}\sup_{x \in [0,N]}N|g_N(x)2\pi N^{-r+\frac{1}{r}}f'(x)|\\
&\le\lim_{N \to \infty} \sup_{x \in [0,N]}\mathcal{O}(N^{\frac{1}{r}})|g_N(x)|\\
&\le\lim_{N \to \infty} \sup_{x \in [0,N]}\mathcal{O}(N^{1+\frac{1}{r}})|g'_N(x)|\\
&=0.
\end{align*}The argument above is actually the only place where we use that $\alpha\in {\mathcal{I}_{N}(0)}$.

Now, recall that since $f$ is positive and increasing on $[0,\infty)$, we know that the denominator $\int_0^N f'(t)\left( 1-\frac{f(t)}{f(N)}\right)\,\dt$ of $g_N$ is always positive and has order $\Omega(N^r)$. So by equation~(\ref{eqnPoly}), it suffices to prove that $\inf_{\alpha\in {\mathcal{I}_{N}(0)}}\int_{0}^{N} f'(t)\left(1-\frac{f(t)}{f(N)} \right)\cos(2\pi\alpha f(t))\,\dt$ is lower-bounded by a negative quantity of absolute value $o(N^r)$. We will show something much stronger: that there exists $C \in \mathbb{R}$ so that for every $N \in \mathbb{N}$ and $\alpha\in \mathbb{R}$, we have $\int_{0}^{N} f'(t)\left(1-\frac{f(t)}{f(N)} \right)\cos(2\pi\alpha f(t))\,\dt \geq C$. This is certainly enough and will complete the proof of the lemma. 

To show this claim, let us fix $N\in\mathbb{N}$ and $\alpha\in\mathbb{R}$ for a moment.
If $\alpha = 0$, then $\cos(2\pi\alpha f(t))=1$ for all $t$ and the considered integral is non-negative, so we may assume that $\alpha\neq 0$. Then
\begin{align*}
\int_{0}^{N} f'(t) \left(  1 - \frac{f(t)}{f(N)}  \right) \cos(2\pi \alpha f(t)) \,\dt &=
\int_{f(0)}^{f(N)}  \left(  1 - \frac{u}{f(N)}  \right) \cos(2\pi \alpha u) \,du.
\end{align*}Moreover, notice that $\left| \int_{0}^{f(0)}\left(  1 - \frac{u}{f(N)}  \right) \cos(2\pi \alpha u)\,du \right| \le  \int_{0}^{f(0)} 1 \,du  = f(0)$, which is a constant. So it suffices to consider the integral whose lower limit is $0$, rather than $f(0)$. We have\begin{align*}
\int_{0}^{f(N)}  &\left(  1 - \frac{u}{f(N)}  \right) \cos(2\pi \alpha u) \,du
\\
&=
\int_{0}^{f(N)} \cos(2\pi \alpha u) \,du
- 
\frac{1}{f(N)}
\int_{0}^{f(N)}
u  \cos(2\pi \alpha u) \,du
\\
&=
\frac{\sin(2\pi \alpha f(N))}{2 \pi \alpha}
-
\left[\frac{\sin(2\pi \alpha f(N))}{2 \pi \alpha}
-
\frac{1}{2\pi \alpha f(N)}
\int_{0}^{f(N)}
  \sin(2\pi \alpha u) \,du\right]
\\
&=
\frac{1- \cos(2\pi \alpha f(N))}{(2\pi \alpha)^2 f(N)} \geq 0.
\end{align*}
This completes the proof of Lemma~\ref{poly zero}.
\end{proof}

Now we use Vinogradov's estimates from Theorem~\ref{poly estimate} to prove the following lemma, which will help take care of exponential sums within the minor arcs.

\begin{lemma}
\label{lem:minor1}
There exists a constant $C >0$ such that for any fixed integer $Q\geq 2$ and any positive integers $N\ge M$ so that $N-M$ is sufficiently large as a function of $Q$, 
\begin{align*}
\sup_{\alpha\in \mathfrak{m}_{N,Q}}
    \left|
    \sum_{j=M+1}^{N}  e(\alpha f(j))
    \right|
 \le C  (N-M)Q^{- \frac{1}{2r} }.
\end{align*}
\end{lemma}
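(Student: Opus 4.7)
The plan is to apply Vinogradov's estimate (Theorem~\ref{poly estimate}) directly to the shifted sum $\sum_{k=1}^{N-M} e(\alpha f(k+M))$, with Vinogradov's parameter $N$ taken to be $N-M$ and the shift taken to be $M$. Theorem~\ref{poly estimate} then splits into two cases depending on whether $\alpha$ admits a good rational approximation at this scale.

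First, suppose Vinogradov's \emph{minor} case applies, i.e.\ no coprime $a, b$ with $1 \le b \le (N-M)^{1/r}$ and $|\alpha - a/b| \le (N-M)^{-r+1/r}$ exist. Then the exponential sum is bounded by $c(f, \epsilon)(N-M)^{1 - \rho_r}$. Choosing the ``sufficiently large'' threshold to force $(N-M)^{\rho_r} \ge c(f, \epsilon) Q^{1/(2r)}$, which amounts to $N-M$ at least a polynomial in $Q$, this bound is at most $(N-M) Q^{-1/(2r)}$, as desired.

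In the remaining \emph{major} case, Theorem~\ref{poly estimate} produces coprime $a, b$ with $1 \le b \le (N-M)^{1/r}$ and $|\alpha - a/b| \le (N-M)^{-r+1/r}$, giving the bound $c(f, \epsilon)(N-M) b^{-1/r + \epsilon}$. I would set $\epsilon := \tfrac{1}{2r}$, so that the target bound $C(N-M) Q^{-1/(2r)}$ follows as soon as we can ensure $b \ge Q$. The plan for this step is to invoke Dirichlet's approximation theorem at scale $(N-M)^{r-1/r}$, producing coprime $a_0, b_0$ with $b_0 \le (N-M)^{r-1/r}$ and $|\alpha - a_0/b_0| \le 1/(b_0(N-M)^{r-1/r})$, and to combine this with the minor-arc hypothesis $\alpha \in \mathfrak{m}_{N,Q}$, which guarantees $|\alpha - a'/b'| > N^{-r+1/r}$ for every coprime $a'/b'$ with $1 \le b' \le Q$.

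The main obstacle, and really the heart of the proof, is this last step. A direct comparison of the condition $|\alpha - a/b| \le (N-M)^{-r+1/r}$ from Vinogradov against the condition $|\alpha - a/b| > N^{-r+1/r}$ from the minor-arc hypothesis does not immediately give a contradiction when $b \le Q$, because $(N-M)^{-r+1/r} \ge N^{-r+1/r}$. One must carry out a more delicate comparison, playing the Dirichlet estimate $|\alpha - a_0/b_0| \le 1/(b_0 (N-M)^{r-1/r})$ off against the minor-arc lower bound to derive an upper bound of the form $b_0 < (N/(N-M))^{r-1/r}$ whenever $b_0 \le Q$, and then exploit that $N-M$ is sufficiently large in $Q$ to conclude that the Vinogradov approximation may be taken with denominator $b > Q$. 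With that step in hand, combining the two cases with the constant $C := c(f, \tfrac{1}{2r})$ suffices.
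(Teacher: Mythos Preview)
Your approach tracks the paper's almost exactly: take $\epsilon = \tfrac{1}{2r}$, set $C = c(f,\tfrac{1}{2r})$, require $N-M \ge Q^{1/(2r\rho_r)}$ to handle Vinogradov's second case, and in the first case argue that $b > Q$. The only divergence is in how that last step is carried out.

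The paper argues directly: if $b \le Q$ then $a/b$ is the centre of an integral or major arc of radius $N^{-r+1/r}$, so the interval $[a/b - N^{-r+1/r},\,a/b + N^{-r+1/r}]$ lies in $\mathfrak{M}_{N,Q}\cup\mathcal{I}_N$ and is therefore disjoint from $\mathfrak{m}_{N,Q}$; since $\alpha\in\mathfrak{m}_{N,Q}$, contradiction. You correctly spot a wrinkle here --- Vinogradov only supplies $|\alpha - a/b|\le (N-M)^{-r+1/r}$, which may exceed $N^{-r+1/r}$, so $\alpha$ need not lie in that $N$-radius arc --- and the paper's parenthetical about $\mathfrak{m}_{N-M,Q}$ suggests the authors are aware of this mismatch without quite resolving it.

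Your Dirichlet-based repair, however, does not close the gap. Combining $|\alpha - a_0/b_0|\le 1/\bigl(b_0(N-M)^{r-1/r}\bigr)$ with the minor-arc lower bound $|\alpha - a_0/b_0| > N^{-r+1/r}$ yields only $b_0 < (N/(N-M))^{r-1/r}$. Since the lemma's hypothesis constrains $N-M$ in terms of $Q$ but places no restriction on $N$ relative to $N-M$, this bound is vacuous: take $N$ enormous compared to $N-M$ and $(N/(N-M))^{r-1/r}$ can be arbitrarily large. So you cannot conclude $b_0 > Q$, nor force the Vinogradov approximant to have denominator exceeding $Q$, from this alone. The step you flag as ``the heart of the proof'' therefore remains open in your sketch as well as in the paper's.
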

\begin{proof}
    Apply Theorem~\ref{poly estimate} with $\epsilon = \frac{1}{2r}$, and take $C = c(f, \frac{1}{2r})$, where $c$ denotes the function from the theorem. Recall that $\rho_r$ is a fixed real number in the interval $(0,1)$. So we may assume that $N-M \ge Q^{\frac{1}{2r\rho_r}}$. Thus, by Theorem~\ref{poly estimate} part~(\ref{polyEst:Second}), for any $\alpha \in \mathbb{R}$ such that there do not exist coprime integers $a$ and $b$ with $1 \le b \le ({N-M})^{1/r}$ and $|\alpha-\frac{a}{b}| \leq ({N-M})^{-r+\frac{1}{r}}$, we have \begin{align*}
    \left|
    \sum_{j=M+1}^{N}  e(\alpha f(j))
    \right|=\left|
    \sum_{j=1}^{N-M}  e(\alpha f(j+M))
    \right| \leq C(N-M)^{1-\rho_r} \le C (N-M) Q^{-\frac{1}{2r}}.
    \end{align*}
    
    It just remains to consider those $\alpha \in \mathfrak{m}_{N,Q}$ for which there do exist such coprime integers $a$ and $b$. Notice that $b>Q$, since otherwise the interval $\left[ \frac{a}{b} - N^{-r + \frac{1}{r}} , \frac{a}{b} + N^{-r + \frac{1}{r}}  \right]$ is contained in $\mathfrak{M}_{N,Q} \cup \mathcal{I}_{N}$ and therefore is disjoint from $\mathfrak{m}_{N,Q}$. (In fact this argument applies even if $\alpha$ is in the smaller set $\mathfrak{m}_{N-M,Q}$, but we will not need this observation.) So, by Theorem~\ref{poly estimate} part~(\ref{polyEst:First}), \begin{align*}
    \left|
    \sum_{j=M+1}^N  e(\alpha f(j))
    \right| \le C(N-M)b^{-\frac{1}{r}+\epsilon} \le C(N-M)Q^{-\frac{1}{2r}},\end{align*} and the lemma follows.
\end{proof}

For the rest of this section we write $C$ for the constant from Lemma~\ref{lem:minor1}. In our final lemma, we use Lemma~\ref{lem:minor1} to take care of exponential sums within the minor arcs.

\begin{lemma}\label{poly minor}
For any fixed integer $Q \geq 2$,
\[    
\limsup_{N \to \infty} \sup_{\alpha\in \mathfrak{m}_{N,Q}}
    \left|\sum_{j=1}^N 
    \wFunc{j}e(\alpha f(j))
    \right|
 \le CQ^{- \frac{1}{2r} }.
\]
\end{lemma}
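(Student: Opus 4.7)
The natural strategy is partial summation (Abel's identity), using the bound on short sums of $e(\alpha f(k))$ from Lemma~\ref{lem:minor1} in exactly the form it is stated, namely with lower index $M$ and upper index $N$. A key observation is that $\wFunc{N}=0$, since the factor $1-f(N)/f(N)$ vanishes. For $0 \le j \le N$, set $U_j(\alpha) = \sum_{k=j+1}^N e(\alpha f(k))$, so that $U_N(\alpha)=0$ and $e(\alpha f(j)) = U_{j-1}(\alpha)-U_j(\alpha)$. Abel summation, combined with $\wFunc{N}=0$, then yields
\[
\sum_{j=1}^N \wFunc{j} e(\alpha f(j)) \;=\; \wFunc{1}\,U_0(\alpha) \;+\; \sum_{j=1}^{N-1} \bigl(\wFunc{j+1}-\wFunc{j}\bigr)\,U_j(\alpha).
\]

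\noindent Let $K=K(Q)$ be the threshold (from Lemma~\ref{lem:minor1} with $\epsilon=\frac{1}{2r}$) beyond which $|U_j(\alpha)| \le C(N-j)Q^{-1/(2r)}$ holds for all $\alpha\in\mathfrak{m}_{N,Q}$ whenever $N-j\ge K$; trivially $|U_j(\alpha)|\le N-j$ otherwise. For the boundary term, the explicit formula for $\wFunc{1}$ together with the fact, used in the proof of Lemma~\ref{poly zero}, that the denominator in $g_N$ is of order $\Omega(N^r)$ gives $\wFunc{1}=\mathcal{O}(N^{-r})$, so $\wFunc{1}\,|U_0(\alpha)|\le \wFunc{1}\cdot CNQ^{-1/(2r)} \to 0$.

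\noindent For the $N-K < j < N$ terms there are fewer than $K$ indices; each factor $|\wFunc{j+1}-\wFunc{j}|$ is at most $\sup_{x\in[0,N]}|g_N'(x)|$, and each $|U_j(\alpha)|$ is at most $K$, so this contribution is at most $K^2\sup_{x\in[0,N]}|g_N'(x)|$, which tends to $0$ by Lemma~\ref{lem:polyDer}. The main term is the sum over $1\le j\le N-K$, which is bounded by
\[
CQ^{-1/(2r)}\sum_{j=1}^{N-1} \bigl|\wFunc{j+1}-\wFunc{j}\bigr|\,(N-j).
\]
The plan is to show that the remaining sum is bounded uniformly in $N$. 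Comparing it to the Riemann integral $\int_0^N |g_N'(t)|(N-t)\,\dt$ via Lemma~\ref{lem:polyDer}, and noting that $g_N$ has a unique interior maximum (which can be seen from the derivative formula and is a general feature when $f$ is positive and strictly increasing on $[0,\infty)$), one can integrate by parts on each of the monotone sub-intervals. This expresses the integral in terms of the total mass $\int_0^N g_N(t)\,\dt=1$ and the value of $g_N$ at its maximum, both of which are $\mathcal{O}(1)$. Absorbing the resulting constant into $C$ yields the claimed bound.

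\noindent The main obstacle is the last step: showing that $\sum_{j=1}^{N-1}|\wFunc{j+1}-\wFunc{j}|(N-j)$ is bounded as $N\to\infty$. There are two sub-issues. First, replacing the sum by the integral requires care because the factor $(N-j)$ can be as large as $N$, so the error bound provided by Lemma~\ref{lem:polyDer} must be multiplied by $N$; this still gives a vanishing error because Lemma~\ref{lem:polyDer} gives $\sup|g_N'|=o(N^{-2+\epsilon})$. Second, the integral itself must be bounded by an absolute constant; this relies on the explicit structure of $g_N$ (in particular that $\int_0^N g_N\,\dt=1$ and that $g_N(0),g_N(N)$ are respectively $\mathcal{O}(N^{-r})$ and $0$), and the computation reduces to bounded expressions in the leading order via the substitution $u=f(t)$ used already in the proof of Lemma~\ref{poly zero}.
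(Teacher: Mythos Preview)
Your Abel-summation route is a genuinely different argument from the paper's.  The paper partitions $\{1,\ldots,N\}$ into $\lfloor\sqrt{N}\rfloor$ blocks of size roughly $\sqrt{N}$, replaces $g_N(j)$ on each block by its minimum (the error is controlled by Lemma~\ref{lem:polyDer}), applies Lemma~\ref{lem:minor1} blockwise, and then recombines using $\sum_j g_N(j)\to 1$ (Lemma~\ref{lem:avgPoly}) to obtain the constant exactly~$C$.  Your partial-summation approach is natural and essentially works, but two points need correcting.

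First, the unimodality claim is false in general.  Take $f(x)=x^3+100x$: then $g_N'(0)$ is proportional to $-f'(0)^2/f(N)<0$, so $g_N$ is decreasing at~$0$; yet $g_N(N/2)\gg g_N(0)$ for large $N$, so $g_N$ must later increase and then decrease to $g_N(N)=0$.  What \emph{is} true is that $g_N'(x)=0$ is a polynomial equation in $x$ of degree at most $2r-2$, so $g_N$ has a number of monotone pieces bounded in terms of $r$ alone; this is enough for your integration-by-parts scheme.  Alternatively one can simply bound $\int_0^N|g_N'(t)|(N-t)\,dt$ directly by splitting $g_N'$ into the two summands $f''(t)(1-f(t)/f(N))$ and $f'(t)^2/f(N)$ and estimating each, avoiding any monotonicity discussion.

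Second, your argument does not recover the stated constant.  Abel summation yields a bound of the form $CQ^{-1/(2r)}\cdot \limsup_N \sum_j |g_N(j+1)-g_N(j)|(N-j)$, and this limsup is in general strictly larger than~$1$ (already for $f(x)=x^r$ with $r=3$ one computes $\int_0^N|g_N'(t)|(N-t)\,dt\approx 1.3$).  You cannot ``absorb'' the extra factor into $C$, because in this section $C$ is fixed once and for all as the constant from Lemma~\ref{lem:minor1}.  Your proof therefore establishes the bound $C'Q^{-1/(2r)}$ for some $C'$ depending only on $f$, which is all that is actually used downstream in Proposition~\ref{poly graphs} and hence for Theorem~\ref{poly dist}; but it does not prove Lemma~\ref{poly minor} exactly as stated.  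The paper's block decomposition is what pins the constant down to~$C$.
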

\begin{proof}
We evaluate the limit by partitioning the sum into many intervals of large size. Notice that the integers between $1$ and $N$ can be partitioned into $\lfloor\sqrt{N}\rfloor$ intervals each of size at least $\lfloor\sqrt{N}\rfloor$ and at most $2\sqrt{N}$. For any positive integer $i \leq \lfloor\sqrt{N}\rfloor$, we write $N[i]$ for the $i$-th part. So $N[1] = \{1,2,\ldots, \lfloor \sqrt{N}\rfloor\}$, and so on, where every part except for possibly the last one (which can be up to twice as large) has size $\lfloor\sqrt{N}\rfloor$.

We will consider the minimum value of $g_N$ within each interval $N[i]$. For convenience, let us write $m_{N,i}$ for the minimum of $g_N(k)$ over all $k \in N[i]$. This notation will let us split up the sum over $N[i]$ into a positive part (which must be fairly small because the value of $g_N$ cannot change much within an interval), and a remaining part to which we will apply Lemma~\ref{lem:minor1}. More formally, notice that for any $i \in \{1,2,\ldots, \lfloor \sqrt{N}\rfloor\}$ and $\alpha \in \mathbb{R}$, we have \begin{align*}
    \sum_{j\in N[i]} g_{N}(j) e(\alpha f(j))
    =
    \left(\sum_{j\in N[i]} \left( g_{N}(j) - m_{N,i}  \right)e(\alpha f(j))\right)
    +m_{N,i}
    \left(\sum_{j\in N[i]}  e(\alpha f(j))
    \right).
\end{align*}
Since $g_N$ is positive and $|e(\alpha f(j))| \leq 1$ for each $j$, we obtain\begin{align*}
    \left|\sum_{j\in N[i]} g_{N}(j) e(\alpha f(j))\right|
    \le
    \left(\sum_{j\in N[i]} \left( g_{N}(j) - m_{N,i}  \right)\right)
    +m_{N,i}
    \left|\sum_{j\in N[i]}  e(\alpha f(j))
    \right|.
\end{align*}
Thus, for any positive integer $N$ and real number $\alpha$, 
\begin{align*}
    \left|
    \sum_{j=1}^{{N}}  g_{N}(j) e(\alpha f(j))
    \right| 
    &\le
    \sum_{i=1}^{\lfloor\sqrt{N}\rfloor}
    \left|
    \sum_{j\in N[i]} g_{N}(j) e(\alpha f(j))
    \right|
    \\
    & \le
\left(\sum_{i=1}^{\lfloor\sqrt{N}\rfloor}
    \sum_{j\in N[i]} \left( g_{N}(j) - m_{N,i}  \right)\right)
+   
\left(\sum_{i=1}^{\lfloor\sqrt{N}\rfloor}
m_{N,i}
    \left|\sum_{j\in N[i]}  e(\alpha f(j))
    \right|\right).
\end{align*}

We consider these two sums separately, beginning with the one on the left since it does not depend on $\alpha$. Since each part $N[i]$ has size at most $2\sqrt{N}$,  we have \begin{align*}g_{N}(j)-m_{N,i} \leq 2\sqrt{N}\sup_{x \in [0,N]}|g'_N(x)|\qquad \textrm{for any }i\leq \lfloor \sqrt{N} \rfloor\textrm{ and }j\in N[i].\end{align*} Thus, by Lemma~\ref{lem:polyDer}, we have\begin{align*}
    \lim_{N \rightarrow \infty}\left(\sum_{i=1}^{\lfloor\sqrt{N}\rfloor}\sum_{j\in N[i]} \left( g_{N}(j) - m_{N,i}  \right)\right)  \le \lim_{N\rightarrow \infty} \left( 2N^{3/2}\sup_{x \in [0,N]}|g'_N(x)|\right) = 0.
\end{align*}

It just remains to evaluate the second sum. By Lemma~\ref{lem:minor1} and the fact that each part $N[i]$ has size at least $\lfloor\sqrt{N}\rfloor$, 
\begin{align*} 
&\limsup_{N \to \infty}
\sup_{\alpha\in \mathfrak{m}_{N,Q}}
\sum_{i=1}^{\lfloor\sqrt{N}\rfloor}m_{N,i}\left|\sum_{j\in N[i]}  e(\alpha f(j))\right|\\
&\qquad\qquad\le \limsup_{N \to \infty}
\sup_{\alpha\in \mathfrak{m}_{N,Q}}
\sum_{i=1}^{\lfloor\sqrt{N}\rfloor}m_{N,i}\left(C|N[i]|Q^{-\frac{1}{2r}}\right)\\
&\qquad\qquad \le CQ^{-\frac{1}{2r}}\limsup_{N \rightarrow \infty} \sum_{j=1}^{N}g_N(j).
\end{align*}

\noindent Since $\lim_{N \rightarrow \infty}\sum_{j=1}^{N}g_N(j)=1$ by Lemma~\ref{lem:avgPoly}, this completes the proof of Lemma~\ref{poly minor}.
\end{proof}

Finally, we use these exponential sum estimates to bound the independence density in the Cayley graph $G(\mathbb{Z}^2, \pm AD_{Q!, k})$. Recall from the beginning of the section that $A = f(\mathbb{N})$ and, for each positive integer $N$, we write $A_N = \{f(1), f(2), \ldots, f(N)\}$. 

\begin{proposition}\label{poly graphs}
    For any integers $Q,k \geq 2$,
\[    
\overline{\alpha}(G(\mathbb{Z}^2,  \pm A D_{Q!,k})) \le \liminf_{N \rightarrow \infty} \overline{\alpha}(G(\mathbb{Z}^2,  \pm A_N D_{Q!,k}))  \le \frac{4 + 2kCQ^{- \frac{1}{2r} }}{2k+2+ 2kCQ^{- \frac{1}{2r} }}.
\]
\end{proposition}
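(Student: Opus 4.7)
The first inequality follows from monotonicity: since $A_N \subseteq A_{N+1} \subseteq A$, the graph $G(\mathbb{Z}^2, \pm A_N D_{Q!,k})$ is a subgraph of both $G(\mathbb{Z}^2, \pm A_{N+1} D_{Q!,k})$ and $G(\mathbb{Z}^2, \pm A D_{Q!,k})$. Since any independent set of a graph remains independent in every subgraph, we get $\overline{\alpha}(G(\mathbb{Z}^2, \pm A D_{Q!,k})) \leq \overline{\alpha}(G(\mathbb{Z}^2, \pm A_N D_{Q!,k}))$ for every $N$, and the right-hand side is non-increasing in $N$ and bounded below by $0$, so the limit exists and satisfies the claimed inequality.

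For the second inequality, the plan is to apply Theorem~\ref{ratio bound} to $G_N := G(\mathbb{Z}^2, \pm A_N D_{Q!,k})$ with the weight function $w_N$ introduced at the start of the section. The supremum is handled directly by Lemma~\ref{lem:supPoly}, which gives $\lim_{N\to\infty} \sup_u \widehat{w}_N(u) = 2k-2$. For the infimum, I would fix $u \in \mathbb{R}^2$ and decompose $\widehat{w}_N(u) = \sum_{d \in \pm D_{Q!,k}} S_N(d,u)$, where $S_N(d,u) = \sum_{j=1}^N g_N(j) e(u \cdot f(j) d)$, then classify each $d$ according to which of the three regions $\mathcal{I}_N$, $\mathfrak{M}_{N,Q}$, or $\mathfrak{m}_{N,Q}$ contains $u \cdot d$. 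Each of these three regions is centrally symmetric (they are defined symmetrically about $0$), so the classification is invariant under $d \mapsto -d$, allowing integral-arc terms to be analyzed in pairs. Combining the three arc lemmas then yields the bound: Lemma~\ref{poly zero} ensures each integral-arc pair contributes $\geq -o(1)$ to $\widehat{w}_N(u)$; Lemma~\ref{lem:polyMajor} bounds the total major-arc contribution by $-4$ for large $N$; and Lemma~\ref{poly minor} together with the estimate $|\pm D_{Q!,k}| = 2(k-1) \leq 2k$ bounds the minor-arc contribution below by $-2kCQ^{-1/(2r)} - o(1)$. Summing, $\liminf_N \inf_u \widehat{w}_N(u) \geq -4 - 2kCQ^{-1/(2r)}$.

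To finish, I would plug these asymptotic bounds into Theorem~\ref{ratio bound}. For any $\epsilon > 0$ and all sufficiently large $N$, we have $\sup_u \widehat{w}_N \geq 2k-2-\epsilon$ and $-\inf_u \widehat{w}_N \leq 4 + 2kCQ^{-1/(2r)} + \epsilon$. Since the function $(A,S) \mapsto \frac{A}{S+A}$ is increasing in $A \geq 0$ and decreasing in $S > 0$, the ratio bound gives
\[
\overline{\alpha}(G_N) \leq \frac{4 + 2kCQ^{-1/(2r)} + \epsilon}{(2k-2-\epsilon) + (4 + 2kCQ^{-1/(2r)} + \epsilon)}.
\]
Letting $\epsilon \to 0$ and then $N \to \infty$, and invoking the convergence established for the first inequality, produces the claimed bound $\frac{4 + 2kCQ^{-1/(2r)}}{2k+2 + 2kCQ^{-1/(2r)}}$. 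The main obstacle I anticipate is the coordination between the arc decomposition and the central-symmetry pairing: one must verify that each $d \in \pm D_{Q!,k}$ is assigned to exactly one arc class, that integral-arc elements can be paired with their negatives before applying Lemma~\ref{poly zero}, and that the simple-looking replacement of $2k-2$ by $2k$ for the minor-arc count is what lines up the final denominator to match the statement.
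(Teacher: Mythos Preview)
Your proposal is correct and follows essentially the same route as the paper: apply Theorem~\ref{ratio bound} with weights $w_N$, use Lemma~\ref{lem:supPoly} for the supremum, split the infimum by arc type, invoke Lemmas~\ref{lem:polyMajor}, \ref{poly zero}, \ref{poly minor}, and use $|\pm D_{Q!,k}|\le 2k$. Your additional remarks about the central symmetry of the arc decomposition (needed to pair $d$ with $-d$ before invoking Lemma~\ref{poly zero}) and the explicit $\epsilon$-monotonicity argument at the end are details the paper leaves implicit but does not handle differently.
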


\begin{proof}
The first inequality holds since $A_1 \subseteq A_2 \subseteq \ldots \subseteq A$. So we just need to upper-bound the limit. For each $N$, we apply Theorem~\ref{ratio bound} to the Cayley graph $G(\mathbb{Z}^2,  \pm A_N D_{Q!,k})$ with weights $w_N$ (and the corresponding function $\widehat{w}_N$) defined as in the beginning of this section. Thus
\[
\overline{\alpha}(G(\mathbb{Z}^2, \pm A_N D_{Q!,k}))
\le
\frac{-\inf_{u\in \mathbb{R}^2}  \widehat{w}_N(u) }{  \sup_{u\in \mathbb{R}^2}  \widehat{w}_N(u)   -\inf_{u\in \mathbb{R}^2}  \widehat{w}_N(u)}.
	\]

We already know that $\lim_{N \rightarrow \infty} \sup_{u\in \mathbb{R}^2}  \widehat{w}_N(u) = 2k-2$ by Lemma~\ref{lem:supPoly}. So it suffices to prove that $\liminf_{N\rightarrow \infty}\inf_{u \in \mathbb{R}^2}\widehat{w}_N(u)\ge -\left( 4+2kCQ^{-\frac{1}{2r}}\right)$. We prove this by splitting the outer sum below into three parts based on whether $u \cdot d$ is in $\mathfrak{M}_{N,Q}$, $\mathcal{I}_N $, or $\mathfrak{m}_{N,Q}$. By considering the three limits separately, and applying Lemmas~\ref{lem:polyMajor},~\ref{poly zero}, and~\ref{poly minor} to the major, integral, and minor arcs (respectively), we obtain\begin{align*}
\liminf_{N \rightarrow \infty}\inf_{u \in \mathbb{R}^2}\widehat{w}_N(u) &= \liminf_{N \rightarrow \infty}\inf_{u \in \mathbb{R}^2}\sum_{d \in \pm D_{Q!, k}}\sum_{j=1}^N g_N(j)e(u \cdot f(j) d) \\
& \ge -4+0-|\pm D_{Q!,k}| C Q^{-\frac{1}{2r}}\\
&\ge -4-2kC Q^{-\frac{1}{2r}}.
\end{align*}

\noindent Here we are using the bound $|\pm D_{Q!, k}|=2k-2\leq 2k$ just to simplify the expression. This completes the proof of Proposition~\ref{poly graphs}.
\end{proof}

Theorem~\ref{poly dist} now follows from Lemma~\ref{embed} and Proposition~\ref{poly graphs} since
\begin{align*}
\limsup_{k \to \infty} \left(\limsup_{Q \to \infty} \chi(G(\mathbb{Z}^2, \pm A D_{Q,k} ))\right)
&\ge 
\limsup_{k \to \infty} \left(\limsup_{Q \to \infty}
\frac{1}{\overline{\alpha}(G(\mathbb{Z}^2, \pm A D_{Q,k} ))}\right)
\\
&\ge
\limsup_{k \to \infty} \left(\limsup_{Q \to \infty}
\frac{2k+2 + 2kCQ^{- \frac{1}{2r} }}{4 + 2kCQ^{- \frac{1}{2r} }}\right)
\\
&=
\limsup_{k \to \infty} 
\frac{2k + 2}{4}
\\
&=
\infty,
\end{align*}
as desired.

\section{Prime distances}\label{sec:prime}

In this section we prove Theorem~\ref{prime dist}. 

First, we briefly recall some basic number theoretic preliminaries.
In this section, $p$ always denotes a prime number; so for instance $\sum_{p\le N} 1$ is equal to the number of primes that are at most $N$.
We let $\mathbb{P}= \{2,3,5,7,11,\ldots \}$ be the set of all prime numbers, and we denote Euler's totient function by $\phi$.
Recall that $\phi(n)= n \Pi_{p | n} \left( 1 - \frac{1}{p} \right)$, and a simple lower bound is given by $\phi(n) \ge \sqrt{n/2}$. (See~\cite[Proposition 2]{lowerBoundTotient} for an elementary proof of this inequality; much stronger asymptotic bounds are also known~\cite[Theorem~328]{hardyWrightBook}.) One form of the prime number theorem, which we shall often apply, is that $\lim_{N \rightarrow \infty}\frac{1}{N}\sum_{p\le N} \log p = 1$. (See~\cite[Section~4.2]{ApostolBook}.) Here and throughout this section, all logarithms are the natural logarithm. 

As with the proof of Theorem~\ref{poly dist}, we must examine some exponential sums in order to apply Theorem~\ref{ratio bound}, and examining these exponential sums splits into three cases given by integral, major, and minor arcs. However, in this section we define those arcs differently. So, let $N\in \mathbb{N}$.
For every $a\in \mathbb{Z}$, we define the \emph{integral arc} $\mathcal{I}_{N}(a) = \left[a-\frac{(\log N)^9}{N}, a + \frac{(\log N)^9}{N} \right]$. We write $\mathcal{I}_N = \bigcup_{a\in \mathbb{Z}} \mathcal{I}_{N}(a)$ for the union of all integral arcs.
Let $Q\ge 2$ be an integer.
For every pair of non-zero coprime integers $a$ and $b$ with $2\le b \le Q$, we define the \emph{major arc}
\[
\mathfrak{M}_{N,Q}(a,b) = \left[ \frac{a}{b}-\frac{(\log N)^9}{N} , \frac{a}{b} + \frac{(\log N)^9}{N} \right].
\]
Again, we write $\mathfrak{M}_{N,Q}$ for the union of the above majors arcs. We call any interval of $\mathbb{R}$ which is disjoint from $\mathcal{I}_N$ and $\mathfrak{M}_{N,Q}$ a \emph{minor arc}, and we write $\mathfrak{m}_{N,Q}= \mathbb{R} \backslash (\mathfrak{M}_{N,Q} \cup \mathcal{I}_N)$.

Let us outline our strategy. Given integers $Q,k\ge 2$, we will bound the independence density of the Cayley graph $G(\mathbb{Z}^2, \pm \mathbb{P}D_{Q!, k})$. This will be enough because, by Lemma~\ref{embed}, $G(\mathbb{Z}^2, \pm \mathbb{P}D_{Q!, k})$ is a subgraph of the desired distance graph of the plane. As in the last section, we will bound this independence density by, for larger and larger integers $N$, applying Theorem~\ref{ratio bound} to the Cayley graph $G(\mathbb{Z}^2, \pm \mathbb{P}_ND_{Q!, k})$ with weights $w_N$, where $\mathbb{P}_N$ is the set of all primes $p \leq N$, and $w_N$ is the function on $\pm \mathbb{P}_ND_{Q!, k}$ defined as follows. 

If $Q$ and $k$ are clear from context, then for each prime $p \leq N$ and $d \in \pm D_{Q!, k}$, we write \begin{align*}
w_N(pd) = \frac{1}{N}\left(1-\frac{p}{N} \right) \log{p}.
\end{align*}
\noindent Note that $w_N$ is well-defined since no two elements in $D_{Q!, k}$ are scalar multiples of each other by Lemma~\ref{not scalar multiples}. Moreover, the value of $w_N(pd)$ only depends on~$p$. So it is convenient to define a similar function but on $\mathbb{N}$. Thus, for any $j \in \mathbb{N}$, we write \begin{align*}\wFunc{j} = \frac{1}{N}\left(1-\frac{j}{N} \right) \log{j}.\end{align*}

Recall that, in order to apply Theorem~\ref{ratio bound}, we need to bound the infimum and supremum of the function $\widehat{w}_N$ whose value, given $u\in \mathbb{R}^2$, is the sum over all $x \in \pm \mathbb{P}_ND_{Q!, k}$ of $w_N(x)e(u\cdot x)$. Re-writing this sum, we have\begin{align*}
 \widehat{w}_N(u) = \sum_{d \in \pm D_{Q!, k}}\sum_{p \leq N} g_N(p)e(u\cdot pd).
\end{align*}
\noindent As before, we often bound $\widehat{w}_N(u)$ by fixing $d$ and considering the inner sum.

We begin by evaluating the following useful sum.

\begin{lemma}
\label{lem:avgPrime} 
We have\begin{align*}\lim_{N \rightarrow \infty} \sum_{p \le N} g_N(p)=\frac{1}{2}.
\end{align*}
\end{lemma}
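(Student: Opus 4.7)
The plan is to expand $g_N(p) = \frac{1}{N}\left(1-\frac{p}{N}\right)\log p$ and split the sum:
\begin{align*}
\sum_{p \le N} g_N(p) = \frac{1}{N}\sum_{p \le N}\log p \; - \; \frac{1}{N^2}\sum_{p \le N} p\log p.
\end{align*}
The first term tends to $1$ directly by the form of the prime number theorem quoted just before the lemma, namely $\frac{1}{N}\sum_{p \le N}\log p \to 1$. So the whole task reduces to showing that the second term tends to $\frac{1}{2}$.

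For the second term I would use Abel/partial summation. Writing $\theta(N)=\sum_{p\le N}\log p$ and applying summation by parts with the weights $p$ (viewed as the values of the identity function at prime arguments), one obtains
\begin{align*}
\sum_{p \le N} p\log p = N\,\theta(N) - \int_{2}^{N} \theta(t)\,dt.
\end{align*}
Dividing by $N^2$, the first piece is $\frac{\theta(N)}{N}\to 1$, while the integral piece is $\frac{1}{N^2}\int_2^N \theta(t)\,dt$. Since $\theta(t)/t \to 1$ as $t\to\infty$, for any $\varepsilon>0$ the integrand is within $\varepsilon t$ of $t$ for $t$ large, and a standard sandwich gives $\frac{1}{N^2}\int_2^N \theta(t)\,dt \to \frac{1}{2}$. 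Combining, $\frac{1}{N^2}\sum_{p\le N} p\log p \to 1 - \frac{1}{2} = \frac{1}{2}$, and hence $\sum_{p\le N} g_N(p) \to 1 - \frac{1}{2} = \frac{1}{2}$.

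The only real content is the partial summation step together with the elementary $L^1$-type averaging of $\theta(t)/t\to 1$; there is no serious obstacle. The main thing to be careful about is to handle the lower limit of summation/integration (the primes start at $2$) so that the bounded contribution from small $t$ is absorbed into an $o(1)$ term when divided by $N^2$. This makes the whole argument essentially one application of the prime number theorem and partial summation.
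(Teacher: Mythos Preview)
Your argument is correct. The Abel summation identity
\[
\sum_{p\le N} p\log p \;=\; N\,\theta(N)\;-\;\int_{2}^{N}\theta(t)\,dt
\]
is exactly what falls out of partial summation with $f(t)=t$ and $A(t)=\theta(t)$, and the averaging step $\frac{1}{N^{2}}\int_{2}^{N}\theta(t)\,dt\to\tfrac12$ follows cleanly from $\theta(t)/t\to 1$ by the $\varepsilon$-sandwich you describe. The small-$t$ contribution is indeed $O(1)/N^{2}=o(1)$, so there is no hidden issue.

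This is, however, a genuinely different route from the paper's. The paper does not use Abel summation at all; instead it introduces an auxiliary parameter $M$, observes that the factor $1-\tfrac{p}{N}$ is, up to an $O(1/M)$ error, the fraction of the intervals $[0,\tfrac{Nj}{M}]$ (for $j=1,\dots,M-1$) that contain $p$, and thereby rewrites $\sum_{p\le N}g_N(p)$ as a double sum $\sum_{j=1}^{M-1}\tfrac{j}{M^{2}}\bigl(\tfrac{M}{Nj}\sum_{p\le Nj/M}\log p\bigr)$ plus a small error. The prime number theorem is then applied to each inner sum, and finally $M\to\infty$. Your approach is shorter and more transparent for this particular lemma. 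The paper's approach, on the other hand, is set up so that the identical decomposition can be reused verbatim in the later minor-arc estimate (Lemma~\ref{prime minor 2}), where one needs to insert the phase $e(\alpha p)$ and invoke Lemma~\ref{prime minor 1} on each inner sum; Abel summation would be less convenient there because no analogue of $\theta(t)\sim t$ is available for the twisted sums.
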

\begin{proof}
    We prove this inequality by breaking the sum up into larger and larger pieces, and considering how many pieces each prime occurs in. So, let $N,M\in \mathbb{N}$ with $M \le N$, and consider a prime $p \le N$. Notice that $p$ occurs in $M-\left\lceil\frac{Mp}{N} \right\rceil$ of the intervals $\{[0,\frac{Nj}{M}]: j \in \{1,2,\ldots, M-1\}\}$. Moreover, since $\left|\left\lceil\frac{Mp}{N} \right\rceil -\frac{Mp}{N}\right|\le 1$, we have\begin{align*}
    \left| \frac{1}{M-\left\lceil\frac{Mp}{N} \right\rceil} - \frac{1}{M\left(1-\frac{p}{N} \right)}\right| \leq \frac{1}{M\left(1-\frac{p}{N} \right)}.
    \end{align*}Thus \begin{align*}
   \left|\sum_{p \le N} g_N(p)-\sum_{j=1}^{M-1} \sum_{p \leq \frac{Nj}{M}}\frac{g_N(p)}{M\left( 1-\frac{p}{N}\right)}\right| \le \sum_{p \le N} \frac{g_N(p)}{M\left( 1-\frac{p}{N}\right)}=\sum_{p \le N} \frac{\log{p}}{NM}.
    \end{align*}By the prime number theorem, $\lim_{M \rightarrow \infty}\lim_{N \rightarrow \infty}\sum_{p \le N} \frac{\log{p}}{NM} = \lim_{M \rightarrow \infty} \frac{1}{M}=0$. So it suffices to consider the limit of the double sum above. Again by the prime number theorem,\begin{align*}
        \lim_{M \rightarrow \infty} \lim_{N \rightarrow \infty}\sum_{j=1}^{M-1} \sum_{p \leq \frac{Nj}{M}}\frac{g_N(p)}{M\left( 1-\frac{p}{N}\right)}
        =\lim_{M \rightarrow \infty} \lim_{N \rightarrow \infty}\sum_{j=1}^{M-1} \frac{j}{M^2}\sum_{p \leq \frac{Nj}{M}} \frac{M\log{p}}{Nj}
        = \lim_{M \rightarrow \infty}\sum_{j=1}^{M-1} \frac{j}{M^2} = \frac{1}{2}.
    \end{align*}This completes the proof of Lemma~\ref{lem:avgPrime}.
\end{proof}

We are now ready to compute the supremum.

\begin{lemma}
\label{lem:supPrime}
For any fixed integers $Q,k \geq 2$, \begin{align*}\lim_{N \rightarrow \infty} \sup_{u \in \mathbb{R}^2} \widehat{w}_N(u) = k-1.
\end{align*}
\end{lemma}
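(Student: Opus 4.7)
The plan is to mirror the proof of Lemma~\ref{lem:supPoly} almost verbatim, since the only differences between the polynomial and prime settings in this lemma are the definition of $g_N$ and the value of the limit $\lim_{N\to\infty}\sum_{p\le N} g_N(p)$, both of which have already been handled.

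First I would recall from Section~\ref{sec:lattices} the general fact proved right after the definition of $\widehat{w}$: since both $\pm \mathbb{P}_N D_{Q!,k}$ and $w_N$ are centrally symmetric with $w_N$ non-negative, the function $\widehat{w}_N$ is real-valued and attains its supremum at $u=0$, where it takes the value $\sum_{x \in \pm \mathbb{P}_N D_{Q!, k}} w_N(x)$. Thus
\[
\sup_{u \in \mathbb{R}^2} \widehat{w}_N(u) \;=\; \widehat{w}_N(0) \;=\; \sum_{d \in \pm D_{Q!, k}} \sum_{p \le N} g_N(p).
\]

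Next, by Lemma~\ref{not scalar multiples} no two elements of $D_{Q!, k}$ are scalar multiples, so for distinct primes $p, p'$ and $d, d' \in \pm D_{Q!, k}$ we have $pd \neq p'd'$ unless $(p,d) = (p',d')$; this means the inner sum is really indexed by distinct elements of $\pm \mathbb{P}_N D_{Q!, k}$ and the double sum factors cleanly. Using $|\pm D_{Q!, k}| = 2|D_{Q!, k}| = 2(k-1)$, we obtain
\[
\sup_{u \in \mathbb{R}^2} \widehat{w}_N(u) \;=\; 2(k-1) \sum_{p \le N} g_N(p).
\]

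Finally, applying Lemma~\ref{lem:avgPrime}, which gives $\lim_{N\to\infty} \sum_{p \le N} g_N(p) = \tfrac{1}{2}$, I conclude
\[
\lim_{N \to \infty} \sup_{u \in \mathbb{R}^2} \widehat{w}_N(u) \;=\; 2(k-1) \cdot \tfrac{1}{2} \;=\; k-1,
\]
as desired. There is really no obstacle here; the content of the lemma is entirely absorbed into Lemma~\ref{lem:avgPrime}, and this statement just packages that limit together with the standard observation that the Fourier transform of a non-negative centrally symmetric weight on a symmetric set is maximized at the origin.
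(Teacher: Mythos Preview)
Your proof is correct and follows essentially the same approach as the paper: both recall that $\widehat{w}_N$ attains its supremum at $u=0$, use $|\pm D_{Q!,k}|=2(k-1)$, and then apply Lemma~\ref{lem:avgPrime} to conclude. Your version is slightly more explicit (you spell out the factorisation via Lemma~\ref{not scalar multiples} and the final arithmetic $2(k-1)\cdot\tfrac12=k-1$), but the argument is the same.
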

\begin{proof}
	Recall from Section~\ref{sec:lattices} that the supremum is attained at $\widehat{w}_N(0)=\sum_{d \in \pm D_{Q!, k}}\sum_{p \le N} g_N(p)$. Thus we are done by Lemma~\ref{lem:avgPrime} and the fact that  $|\pm D_{Q!, k}|=2|D_{Q!,k}|=2(k-1)$.
\end{proof}

Next we use Lemma~\ref{sandwich} to handle the exponential sums within the major arcs.

\begin{lemma}
\label{lem:primeMajor} For any fixed integers $Q, k \geq 2$,\begin{align*}
    \liminf_{N \rightarrow \infty} \inf_{u \in \mathbb{R}^2} \sum_{\substack{d \in \pm D_{Q!, k}\\u \cdot d \in \mathfrak{M}_{N,Q}}}\sum_{p \le N} g_N(p)e(u \cdot pd) \geq -2.
\end{align*}
\end{lemma}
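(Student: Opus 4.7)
The plan is to mimic the proof of Lemma~\ref{lem:polyMajor} almost verbatim, with the only substantive change being that the averaging constant $\lim_N \sum_{j=1}^N g_N(j) = 1$ (from Lemma~\ref{lem:avgPoly}) is replaced by $\lim_N \sum_{p \le N} g_N(p) = \frac{1}{2}$ (from Lemma~\ref{lem:avgPrime}). This factor of one-half is precisely what changes the bound from $-4$ in the polynomial case to $-2$ here.

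First I would invoke Lemma~\ref{sandwich} to obtain the constant $\epsilon_{Q,k} > 0$ with the property that for every $u \in \mathbb{R}^2$, the set of $d \in \pm D_{Q!,k}$ such that $u \cdot d$ lies within $\epsilon_{Q,k}$ of a fraction $a/b$ with nonzero coprime $a,b$ and $2 \le b \le Q$ has at most four elements and is centrally symmetric. Next I would observe that the major arcs $\mathfrak{M}_{N,Q}$ defined in this section have half-width $(\log N)^9/N$, which tends to $0$, so for all sufficiently large $N$ the condition $u \cdot d \in \mathfrak{M}_{N,Q}$ is strictly stronger than the condition $|u \cdot d - a/b| < \epsilon_{Q,k}$ for some such $a/b$. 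Hence for large $N$ the set of $d \in \pm D_{Q!,k}$ with $u \cdot d \in \mathfrak{M}_{N,Q}$ is a centrally symmetric set of size at most $4$, independently of $u$.

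The rest is bookkeeping. Pair each such $d$ with $-d$ and use that for any real $\alpha$,
\[
e(\alpha) + e(-\alpha) = 2 \cos(2\pi \alpha) \ge -2,
\]
so for each pair $\{d, -d\}$ in the (at most two) centrally symmetric pairs,
\[
\sum_{p \le N} g_N(p) \left[ e(u \cdot p d) + e(u \cdot p(-d)) \right] \ge -2 \sum_{p \le N} g_N(p).
\]
Summing over the at most two pairs yields
\[
\sum_{\substack{d \in \pm D_{Q!, k}\\ u \cdot d \in \mathfrak{M}_{N,Q}}} \sum_{p \le N} g_N(p) e(u \cdot p d) \ge -4 \sum_{p \le N} g_N(p),
\]
uniformly in $u$, for all large enough $N$. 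Taking $\liminf_{N \to \infty}$ and applying Lemma~\ref{lem:avgPrime}, the right-hand side tends to $-4 \cdot \tfrac{1}{2} = -2$, which gives the claimed bound.

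There is no real obstacle here beyond checking that the sandwich constant $\epsilon_{Q,k}$ eventually dominates the major-arc half-width $(\log N)^9/N$, which is immediate. The argument is structurally identical to the polynomial version; only the numerical constant differs because the prime-weighted average of $g_N$ is $\tfrac{1}{2}$ rather than $1$.
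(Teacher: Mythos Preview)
Your proof is correct and follows essentially the same approach as the paper, which explicitly says the argument is identical to that of Lemma~\ref{lem:polyMajor} with Lemma~\ref{lem:avgPrime} replacing Lemma~\ref{lem:avgPoly}. If anything, you spell out the pairing and the comparison $(\log N)^9/N < \epsilon_{Q,k}$ more explicitly than the paper does.
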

\begin{proof}
    The proof of this lemma is essentially the same as for Lemma~\ref{lem:polyMajor} in the last section. First of all, by Lemma~\ref{sandwich}, there exists $\epsilon_{Q,k}>0$ such that for every $u\in \mathbb{R}^2$, there are at most four elements $d\in \pm D_{Q!,k}$ such that $\left|u \cdot d - \frac{a}{b} \right| < \epsilon_{Q,k}$ for some non-zero coprime integers $a$ and $b$ with $2\le b\le Q$. Thus, for any sufficiently large integer $N$, there are at most four elements $d\in \pm D_{Q!,k}$ such that $u \cdot d \in \mathfrak{M}_{N,Q}$. Moreover, again by Lemma~\ref{sandwich}, the set of all such $d$ is centrally symmetric. The lemma then follows from the fact that $e(\alpha)+e(-\alpha)=2\cos(2\pi\alpha)\geq -2$ for any real number $\alpha$, and the fact that $\lim_{N \rightarrow \infty}\sum_{p \le N}^N g_N(p) =\frac{1}{2}$ by Lemma~\ref{lem:avgPrime}.
\end{proof}

Next we shall examine certain exponential sums within the integral arcs.
We require the following more precise version of the prime number theorem that includes the first few lower order terms \cite{poussin1899function}.

\begin{theorem}[de La~Vall{\'e}e Poussin \cite{poussin1899function}]\label{pnt}
    We have that
    \[
    \sum_{p\le N} \log p = \sum_{k=0}^{10} \frac{k!N}{(\log N)^k} + \mathcal{O}\left(\frac{N}{(\log N)^{11}}\right).
    \]
\end{theorem}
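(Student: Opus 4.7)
The plan is to deduce this expansion from de la Vallée Poussin's classical form of the prime number theorem with an error term, combined with the asymptotic expansion of the logarithmic integral. There are three ingredients I would assemble: (i) a zero-free region for the Riemann zeta function $\zeta(s)$ near the line $\Re(s) = 1$, (ii) Perron's formula applied to the logarithmic derivative $-\zeta'(s)/\zeta(s) = \sum_n \Lambda(n) n^{-s}$, and (iii) iterated integration by parts on the logarithmic integral $\mathrm{li}(x) = \int_2^x \,dt/\log t$.

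For (i) I would follow the classical argument based on the trigonometric inequality $3+4\cos\varphi+\cos(2\varphi)\ge 0$, which applied to $|\zeta(\sigma)^3\zeta(\sigma+it)^4\zeta(\sigma+2it)|$ rules out zeros of $\zeta$ in a region of the form $\Re(s)\ge 1 - c/\log(|\Im s|+2)$. For (ii) I would shift the contour in Perron's formula into this zero-free region, capturing only the simple pole of $-\zeta'/\zeta$ at $s=1$; standard estimates on $\zeta'/\zeta$ along the shifted contour then yield
\[
\psi(N) = \sum_{n \le N} \Lambda(n) = N + \mathcal{O}\bigl(N\exp(-c\sqrt{\log N})\bigr).
\]
The contribution of prime powers $p^k$ with $k\ge 2$ to $\psi(N)$ is trivially bounded by $\mathcal{O}(\sqrt{N}\log N)$, so the same estimate holds for $\theta(N) := \sum_{p\le N}\log p$, an error which is negligible compared to $N/(\log N)^{11}$.

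For (iii) successive integration by parts on $\mathrm{li}(x)$ gives, for any fixed $K$,
\[
\mathrm{li}(x) = \sum_{k=0}^{K-1} \frac{k!\,x}{(\log x)^{k+1}} + \mathcal{O}\!\left(\frac{x}{(\log x)^{K+1}}\right).
\]
Taking $K=11$ and combining this with the PNT statement $\pi(x) = \mathrm{li}(x) + \mathcal{O}(x\exp(-c\sqrt{\log x}))$ through the Abel summation identity $\pi(N)\log N = \theta(N) + \int_2^N \pi(t)/t\,dt$ then produces the desired asymptotic form on the left-hand side up to an error of order $N/(\log N)^{11}$.

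The main obstacle is purely bookkeeping: one must verify that each of the three sources of error---the zero-free-region bound on $\psi$ (and hence $\theta$), the truncation error in the expansion of $\mathrm{li}$, and the Abel summation step---is dominated by $N/(\log N)^{11}$. None of these estimates is individually difficult, but keeping track of how they compose amounts essentially to reading off the constants from de la Vallée Poussin's original 1899 manuscript.
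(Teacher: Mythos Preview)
The paper does not prove this theorem; it is quoted as a classical result of de la Vall\'ee Poussin and used as a black box in the proof of Corollary~\ref{pntinterval}. So there is no argument in the paper to compare against.

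That said, there is a genuine issue in your plan, and it stems from the fact that the displayed formula is itself misstated. The Chebyshev function $\theta(N)=\sum_{p\le N}\log p$ carries no secondary main terms: de la Vall\'ee Poussin's theorem gives $\theta(N)=N+\mathcal{O}\bigl(N\exp(-c\sqrt{\log N})\bigr)$, hence $\theta(N)=N+\mathcal{O}\bigl(N/(\log N)^{11}\bigr)$, and the terms $k!\,N/(\log N)^k$ for $k\ge 1$ simply are not present (that expansion belongs to $\mathrm{li}(N)$, not to $\theta(N)$). Your steps (i)--(ii) are the standard route and already deliver exactly this; you are finished after (ii), and this is all the paper actually needs for Corollary~\ref{pntinterval}.

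Your step (iii) does not produce the printed formula either. If you carry out the Abel identity $\theta(N)=\pi(N)\log N-\int_2^N\pi(t)\,dt/t$ with $\pi=\mathrm{li}+E$, the $\mathrm{li}$-contributions cancel: integration by parts gives $\int_2^N\mathrm{li}(t)\,dt/t=\mathrm{li}(N)\log N-(N-2)+\mathcal{O}(1)$, so the main term collapses back to $N$ and every would-be term $k!\,N/(\log N)^k$ with $k\ge 1$ disappears. The detour through $\mathrm{li}$ is therefore circular and cannot manufacture those extra terms---which is consistent with their not being present in $\theta(N)$ in the first place.
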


As a simple corollary of Theorem \ref{pnt}, we obtain the following. We note that the big-$\mathcal{O}$ notation in the statement below does \emph{not} consider $m$ to be fixed.

\begin{corollary}\label{pntinterval}
    For any integer $m$ with $0 \le m \le (\log N)^{10}$, we have that
    \[
    \sum_{m \frac{N}{( \log N )^{10}  }< p \le (m+1)\frac{N}{ ( \log N )^{10} }} \log p = \frac{N}{(\log N)^{10}} + \mathcal{O}\left(\frac{N}{(\log N)^{11}}\right).
    \]
\end{corollary}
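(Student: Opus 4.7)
The plan is to obtain the estimate by subtracting two applications of Theorem~\ref{pnt}, once at the upper endpoint $(m+1)M$ and once at the lower endpoint $mM$, where $M = N/(\log N)^{10}$. Writing $\sum_{mM < p \le (m+1)M}\log p = \sum_{p \le (m+1)M}\log p - \sum_{p \le mM}\log p$, the $k=0$ terms in the two Poussin expansions cancel to give the desired main term $(m+1)M - mM = M = N/(\log N)^{10}$.

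The first thing to verify is that, throughout the allowed range $0 \le m \le (\log N)^{10}$, the two endpoints are comparable to $N$ on the logarithmic scale. When $m \ge 1$ we have $M \le mM \le (m+1)M \le 2N$, so both $\log(mM)$ and $\log((m+1)M)$ are equal to $\log N + \mathcal{O}(\log\log N)$ and in particular are $\Theta(\log N)$. This immediately controls the error term from Theorem~\ref{pnt} at each endpoint: each is $\mathcal{O}(N/(\log N)^{11})$, uniformly in $m$.

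The substantive computation is then to bound the contributions of the lower-order terms $k = 1,\ldots,10$ in the asymptotic expansion. For each such $k$, the mean value theorem applied to $g_k(x) = x/(\log x)^k$, whose derivative $g_k'(x) = (\log x)^{-k} - k(\log x)^{-k-1}$ satisfies $|g_k'(x)| = \mathcal{O}((\log x)^{-k})$ throughout $[M, 2N]$, yields
\[
\frac{(m+1)M}{(\log((m+1)M))^k} - \frac{mM}{(\log(mM))^k} \;=\; M\cdot\mathcal{O}\bigl((\log N)^{-k}\bigr) \;=\; \mathcal{O}\bigl(N/(\log N)^{10+k}\bigr).
\]
Summing the ten such contributions is dominated by the $k=1$ term and gives $\mathcal{O}(N/(\log N)^{11})$ in total.

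The remaining boundary case $m = 0$ requires separate handling because $\log(mM)$ is undefined; here a single application of Theorem~\ref{pnt} at $M$ suffices, since $\log M = \log N - 10\log\log N \sim \log N$ makes every term $k!M/(\log M)^k$ with $k \ge 1$ of order $\mathcal{O}(N/(\log N)^{11})$, and similarly for the error. The only real care needed anywhere is to ensure the bounds are uniform in $m$, and the observation $\log(mM) \asymp \log N$ over the full range of $m$ is precisely what guarantees this; I do not foresee a genuine obstacle beyond this routine bookkeeping.
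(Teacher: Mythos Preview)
Your proposal is correct and follows essentially the same approach as the paper: subtract two instances of Theorem~\ref{pnt} at the endpoints and control the differences of the terms $x/(\log x)^k$ via the mean value theorem, using that $\log(mM)\asymp\log N$ uniformly in $m$. Your treatment is in fact slightly more careful, handling the boundary case $m=0$ explicitly (the paper's proof tacitly assumes $m\ge 1$) and recording the sharper bound $g_k'(x)=\mathcal{O}((\log x)^{-k})$ rather than just $\mathcal{O}((\log x)^{-1})$.
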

\begin{proof}
First note that when $m=0$, the corollary follows by applying Theorem~\ref{pnt} to the integer $N_2 = \lceil\frac{N}{( \log N )^{10}}\rceil$. The term for $k=0$ is $\lceil\frac{N}{(\log N)^{10}}\rceil$, and each of the other terms is $\mathcal{O}\left(\frac{N}{(\log N)^{11}}\right)$. So the corollary holds in this case, and we may assume that $m \geq 1$.

Now, apply Theorem~\ref{pnt} to the integers $N_1 =  m \lceil\frac{N}{( \log N )^{10}}\rceil$ and $N_2= (m+1) \lceil\frac{N}{( \log N )^{10}}\rceil$. Note that $|N_1-N_2|=\lceil\frac{N}{( \log N )^{10}}\rceil$, and consider, for a fixed positive integer $k$, how much the function $f_k(x) = \frac{x}{(\log x)^k}$ can change between $N_1$ and $N_2$. Since $N_1 \le N_2$, we have that $\sup_{x \in [N_1,N_2]}f'_k(x) = \mathcal{O}(\frac{1}{\log{N_1}}) = \mathcal{O}(\frac{1}{\log{N}})$ since $m\geq 1$. Thus $|f_k(N_1)-f_k(N_2)| = \mathcal{O}(\frac{N}{(\log{N})^{11}})$ for each positive integer $k$, which completes the proof.
\end{proof}

We are now ready to examine the exponential sums within the integral arcs.

\begin{lemma}\label{prime zero}
    We have that
    \[
    \liminf_{N \to \infty} \inf_{\alpha\in \mathcal{I}_N}
    \sum_{p \le N} 
    g_N(p)\left[e(\alpha p) + e(-\alpha p)\right] \ge 0.
    \]
\end{lemma}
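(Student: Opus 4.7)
The strategy mirrors the proof of Lemma~\ref{poly zero}. Since $\cos(2\pi\alpha p) = \cos(2\pi(\alpha - a)p)$ for every $a \in \mathbb{Z}$, periodicity lets me replace the infimum over $\alpha \in \mathcal{I}_N$ by the infimum over $\alpha \in \mathcal{I}_N(0)$, i.e.\ over $|\alpha| \le (\log N)^9/N$. Writing $e(\alpha p) + e(-\alpha p) = 2\cos(2\pi\alpha p)$, my aim is to show that under this restriction the sum is close to the explicit integral
\[
J_N(\alpha) \;:=\; \int_0^N \frac{2}{N}\Bigl(1 - \frac{t}{N}\Bigr)\cos(2\pi\alpha t)\,dt,
\]
and then to verify directly that $J_N(\alpha) \ge 0$.

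For the approximation step, I would partition $[0,N]$ into $M := (\log N)^{10}$ equal subintervals $I_m = (mN/M, (m+1)N/M]$ with left endpoints $t_m := mN/M$, and set $H(t) := \frac{2}{N}(1 - t/N)\cos(2\pi\alpha t)$. A direct calculation gives the uniform bound $|H'(t)| \le 2/N^2 + 4\pi|\alpha|/N = O((\log N)^9/N^2)$ on $[0,N]$, so on each $I_m$ one has $|H(p) - H(t_m)| = O(1/(N\log N))$. Summing this error against $\log p$ and using the prime number theorem gives an $O(1/\log N)$ error from replacing $H(p)$ by $H(t_m)$ throughout. Corollary~\ref{pntinterval} then yields $\sum_{p \in I_m} \log p = N/M + O(N/(M\log N))$, so up to $O(1/\log N)$ our sum equals the Riemann sum $(N/M)\sum_{m=0}^{M-1} H(t_m)$. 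Since $\int_{I_m} |H(t) - H(t_m)|\,dt \le (N/M)^2 \sup|H'|$, this Riemann sum differs from $J_N(\alpha)$ by at most $N^2 \sup|H'|/M = O((\log N)^9/M) = O(1/\log N)$ as well.

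Finally, I would evaluate $J_N(\alpha)$. When $\alpha = 0$ one immediately gets $J_N(0) = 1$. When $\alpha \ne 0$, splitting
\[
J_N(\alpha) \;=\; \frac{2}{N}\int_0^N \cos(2\pi\alpha t)\,dt \;-\; \frac{2}{N^2}\int_0^N t\cos(2\pi\alpha t)\,dt
\]
and evaluating the second integral by parts gives, after cancellation,
\[
J_N(\alpha) \;=\; \frac{2\bigl(1-\cos(2\pi\alpha N)\bigr)}{N^2(2\pi\alpha)^2} \;\ge\; 0,
\]
so the claim follows. The main obstacle is calibrating the arc half-width $(\log N)^9/N$ against the partition size $M = (\log N)^{10}$: both the oscillation of the cosine within each $I_m$ and the error term $O(N/(\log N)^{11})$ supplied by the refined prime number theorem must be small compared with the main term $N/M$, and it is precisely for this balance that Poussin's sharp form (Theorem~\ref{pnt}), rather than the crude prime number theorem, is needed.
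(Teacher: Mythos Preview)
Your proposal is correct and follows essentially the same approach as the paper: reduce to $\mathcal{I}_N(0)$ by periodicity, partition $[0,N]$ into about $(\log N)^{10}$ blocks, use Corollary~\ref{pntinterval} to replace $\sum_{p\in I_m}\log p$ by the block length, pass to a Riemann sum and then to the Fej\'er-type integral, and finally verify nonnegativity via the closed form $(1-\cos)/(2\pi\alpha)^2$. Your organisation---factoring the summand as $H(p)\log p$ with $H(t)=\tfrac{2}{N}(1-t/N)\cos(2\pi\alpha t)$ and bounding $|H'|$ directly---is a slightly cleaner bookkeeping of the same three error sources the paper handles.
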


\begin{proof}
As in the proof of Lemma~\ref{poly zero}, it is convenient to write $e(\alpha p) + e(-\alpha p)=2\cos(2\pi\alpha p)$. Since this value is the same for $\alpha$ and $\alpha+1$, it makes no difference to take the infimum over all $\alpha \in \mathcal{I}_{N}(0)$ instead of over all $\alpha \in \mathcal{I}_{N}$. Consequently, it is enough to prove that $\liminf_{N \to \infty} \inf_{\alpha\in {\mathcal{I}_{N}(0)}} \sum_{p \le N} g_N(p) \cos(2\pi\alpha p) \geq 0$. 
If $\alpha = 0$, then this sum is non-negative since $g_N(p) \ge 0$ for every $p \le N$. So we may assume that $\alpha \neq 0$.

We now prove inequality~(\ref{eqnPoly2}) below. After that, we will evaluate its right-hand side.
\begin{align}
\label{eqnPoly2}
    \liminf_{N \to \infty} \inf_{\alpha\in {\mathcal{I}_{N}(0)}} \sum_{p\le N} \wFunc{p} \cos(2\pi\alpha p)
    \geq 
 \liminf_{N \to \infty} \inf_{\alpha\in \mathbb{R}\setminus \{0\}}
\frac{1}{N}\int_{0}^{N} \left(1- \frac{t}{N}\right) \cos(\alpha t)\,\dt
\end{align}
To prove that this inequality holds, we shall partition the sum into roughly $(\log N)^{10}$ intervals of size about $N/(\log N)^{10}$. More precisely, the integers between $1$ and $N$ can be partitioned into $\lfloor(\log N)^{10} \rfloor$ intervals each of size $\lfloor N/(\log N)^{10} \rfloor$ or $\lceil N/(\log N)^{10} \rceil$. For any positive integer $m \leq (\log N)^{10}$, we write $N[m]$ for the $m$-th part. So $N[1] = \{1,2,\ldots, \lfloor N/(\log N)^{10} \rfloor\}$, and so on, where every part has size either $\lfloor N/(\log N)^{10} \rfloor$ or $\lceil N/(\log N)^{10} \rceil$. 

Now, for any positive integer $N$ and real number $\alpha$, 
\[
\sum_{p \le N} g_N(p) \cos(2\pi\alpha p)
=
\sum_{m=1}^{(\log N)^{10} }
\sum_{p\in N[m]} g_N(p) \cos(2\pi\alpha p).
\]Informally, our goal is to apply Corollary~\ref{pntinterval} to the $\log p$ part of $g_N(p)=\frac{1}{N}\left(1-\frac{p}{N} \right)\log p$ within the inner sum above. Then we will prove that the resulting function is close to a corresponding integral. Towards this end, it is convenient to define the following function. For any fixed non-zero $\alpha \in \mathbb{R}$ and $N \in \mathbb{N}$, we write $h_{N, \alpha}$ for the function where \begin{align*}
h_{N, \alpha}(x) = \left(
    1 - \frac{x}{( \log N )^{10}} 
    \right) 
    \cos\left(2\pi\alpha x \frac{N}{ ( \log N )^{10} }\right).\end{align*}

For each $1\le m \le (\log N)^{10}$, we have $\lim_{N\to \infty} N(\max_{j\in N[m]} g_N(j) - \min_{j\in N[m]} g_N(j)) \le \lim_{N\to \infty} (\log N)^{-9} =0$, so $\lim_{N\to \infty} N(\max_{j\in N[m]} g_N(j) - \min_{j\in N[m]} g_N(j))=0$ since it is non-negative.
We also have that $\limsup_{N\to \infty} \sup_{\alpha\in \mathcal{I}_N(0)} \left|\alpha N / (\log N)^{10} \right| \le \lim_{N\to \infty} (\log N)^{-1}  = 0$ since $|\alpha|\le (\log N)^9/N$.
Therefore,
$\lim_{N\to \infty} (\cos(2\pi\alpha j) - \min_{j\in N[m]} \cos(2\pi\alpha j) )=0$, and furthermore,
$\lim_{N\to \infty} N(\max_{j\in N[m]} g_N(j)\cos(2\pi\alpha j) - \min_{j\in N[m]} g_N(j) \cos(2\pi\alpha j) )=0$.
Hence,
\begin{align*}
\liminf_{N \to \infty} &  \inf_{\alpha\in \mathcal{I}_N(0)}
\sum_{m=1}^{(\log N)^{10} }
\sum_{p\in N[m]} g_N(p) \cos(2\pi\alpha p)
=
\liminf_{N \to \infty}  \inf_{\alpha\in \mathcal{I}_N(0)}
\frac{1}{N}
\sum_{m=1}^{(\log N)^{10} }
\sum_{p\in N[m]}h_{N, \alpha}(m) \log p.
\end{align*}
Then, by Corollary~\ref{pntinterval}, 
\begin{align*}
    \liminf_{N \to \infty}  \inf_{\alpha\in \mathcal{I}_N(0)}
\frac{1}{N}
\sum_{m=1}^{(\log N)^{10} }
\sum_{p\in N[m]}h_{N, \alpha}(m)(\log p) 
    &=
    \liminf_{N \to \infty}  \inf_{\alpha\in \mathcal{I}_N(0)}
\frac{1}{( \log N )^{10}}
\sum_{m=1}^{(\log N)^{10} }
h_{N, \alpha}(m).
\end{align*}
Now, for any fixed non-zero $\alpha \in \mathbb{R}$ and $N \in \mathbb{N}$, note that
    \begin{align*}
    \left| \frac{1}{(\log N)^{10}}\sum_{m=1}^{( \log N )^{10}}h_{N, \alpha}(m) - \frac{1}{(\log N)^{10}}\int_{0}^{(\log N )^{10}}h_{N, \alpha}(t)\,\dt\right| 
    &\le \sup_{x \in [0,(\log N )^{10}]} |h_{N, \alpha}'(x)|.
    \end{align*}

    By evaluating this derivative, it can be verified that this supremum is at most $\frac{4\pi|\alpha| N +1}{(\log{N})^{10}}$. Since $\alpha\in \mathcal{I}_{N}(0) = \left[-\frac{(\log N)^9}{N}, \frac{(\log N)^9}{N}\right]$, putting this all together we have proven that\begin{align*}
    \liminf_{N \to \infty} \inf_{\alpha\in {\mathcal{I}_{N}(0)}} \sum_{p\le N} \wFunc{p} \cos(2\pi\alpha p)
    =
 \liminf_{N \to \infty} \inf_{\alpha\in {\mathcal{I}_{N}(0)}} 
\frac{1}{(\log N )^{10}}\int_{0}^{(\log N )^{10}} h_{N,\alpha}(t)\,\dt.
\end{align*}
This completes the proof of equation~(\ref{eqnPoly2}) since $(\log{N})^{10}\rightarrow \infty$ as $N\rightarrow \infty$.

Finally, for any non-zero $\alpha \in \mathbb{R}$, we have
\begin{align*}
\frac{1}{N}\int_{0}^{N} \left(1- \frac{t}{N}\right) \cos(\alpha t)\,\dt &= 
\frac{1}{N}\int_{0}^{N} \cos(\alpha t) \,\dt 
-
\frac{1}{N^2}\int_{0}^{N} 
t \cos(\alpha t) \,\dt
\\&=\frac{1}{N}\left[\frac{\sin(\alpha x)}{\alpha }\right]_0^N-\frac{1}{N^2}\left[\frac{x\sin(\alpha x)}{\alpha }+\frac{\cos(\alpha x)}{\alpha^2}\right]_0^N
\\
&=-\frac{1}{N^2}\left[\frac{\cos(\alpha x)}{\alpha^2}\right]_0^N\\
&=\frac{1}{N^2}\left(\frac{1- \cos(\alpha N)}{\alpha^2}\right).
\end{align*}
The lemma now follows since $\frac{1- \cos(\alpha N)}{\alpha^2} \ge 0$ for all non-zero $\alpha\in \mathbb{R}$ and all $N \in \mathbb{N}$.
\end{proof}

The next step is to examine the exponential sums within the minor arcs. We require Dirichlet's theorem (see~\cite[Theorem~4.1]{nathanson1996additive}), which says the following.

\begin{theorem}[Dirichlet]\label{dirichlet}
    For any real numbers $\alpha$ and $W$ with $W \ge 1$, there exist coprime integers $a$ and $b$ with $1\leq b\le W$ and $\left| \alpha - \frac{a}{b} \right| \le \frac{1}{bW}$.
\end{theorem}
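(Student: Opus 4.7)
The plan is to apply the pigeonhole principle together with a coprimality reduction; this is the classical proof of the Dirichlet approximation theorem.

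First I would set $N = \lfloor W \rfloor \geq 1$ and consider the $N+1$ fractional parts $\{j\alpha\}$ for $j \in \{0, 1, \ldots, N\}$, all of which lie in the half-open interval $[0, 1)$. Partitioning $[0, 1)$ into $N$ half-open subintervals each of length $1/N$, the pigeonhole principle guarantees two indices $0 \leq j_1 < j_2 \leq N$ for which $\{j_1\alpha\}$ and $\{j_2\alpha\}$ lie in the same subinterval. Setting $b' = j_2 - j_1$ and $a' = \lfloor j_2\alpha \rfloor - \lfloor j_1\alpha \rfloor$, I obtain integers with $1 \leq b' \leq N \leq W$ and $|b'\alpha - a'| \leq 1/N$. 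To replace $1/N$ by the tighter $1/W$ needed when $W$ is not an integer, the partition should be refined: one can take $\lceil W \rceil$ subintervals of length at most $1/W$ covering $[0, 1)$, apply pigeonhole to the $\lceil W \rceil + 1$ points $\{j\alpha\}$ for $j \in \{0, \ldots, \lceil W \rceil\}$, and handle the boundary case $b' = \lceil W \rceil$ separately; alternatively, one may invoke the precise form from Nathanson's book as cited in the paper.

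Finally, to enforce coprimality, I would let $d = \gcd(a', b')$ (with $d = b'$ in the degenerate case $a' = 0$) and set $a = a'/d$ and $b = b'/d$. Then $a$ and $b$ are coprime, $1 \leq b \leq b' \leq W$, and
\[
\left| \alpha - \frac{a}{b} \right| = \frac{|b'\alpha - a'|}{b'} \leq \frac{1}{b'W} \leq \frac{1}{bW},
\]
as required. The only genuine obstacle is the mild technical point of achieving the bound $1/W$ rather than the weaker $1/\lfloor W \rfloor$ yielded by the naive uniform partition, which is a matter of choosing the pigeonhole subdivision with a little care.
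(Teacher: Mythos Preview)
The paper does not prove Dirichlet's theorem at all; it merely states it and cites \cite[Theorem~4.1]{nathanson1996additive} for the proof. Your pigeonhole argument is the standard classical proof and is correct, including your acknowledgement of the minor technicality when $W$ is not an integer (which is cleanly resolved by applying the integer version with $N=\lfloor W\rfloor$ to obtain $|b\alpha-a|\le 1/(N+1)<1/W$, since $W<\lfloor W\rfloor+1$).
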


This theorem helps us apply the following estimates of Vinogradov~\cite{vinogradov1937representation,vinogradow1937some} for exponential sums with prime exponents.
These estimates were originally used to prove that every sufficiently large odd number is the sum of three primes.
See also~\cite[Lemma~8.3]{nathanson1996additive} for a presentation of this result.

\begin{theorem}[Vinogradov {\cite{vinogradov1937representation,vinogradow1937some}}]\label{prime estimate 1}
    For any positive real numbers $B$ and $C$ so that $C > 2B$, there exists a real number $\hat{c}(B,C)>0$ satisfying the following. Let $\alpha \in \mathbb{R}$ and $N \in \mathbb{N}$ be such that there exist coprime integers $a$ and $b$ with $1 \le b\le (\log N)^B$ and $\left| \alpha - \frac{a}{b} \right| \le \frac{(\log N)^B}{N}$. Then
    \[
    \left| \sum_{p\le N} (\log p) e(\alpha p)\right|\le  \frac{N}{\phi(b)}+\hat{c}(B,C)\left(\frac{N(\log N)^{2B}}{(\log N)^C} \right).
    \]
\end{theorem}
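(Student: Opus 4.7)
The plan is to follow the classical circle-method argument for exponential sums over primes. First, I would replace the sum $\sum_{p \le N} (\log p)\, e(\alpha p)$ with the von Mangoldt sum $\sum_{n \le N} \Lambda(n)\, e(\alpha n)$, since the difference coming from prime powers $p^k$ with $k \ge 2$ is only $\mathcal{O}(\sqrt{N}\log N)$ and is absorbed into the error term. Next, I would apply Vaughan's identity (with cut-off parameters $U,V$ to be chosen as small powers of $N$) to express $\Lambda(n)$ as a bounded linear combination of Dirichlet convolutions. This decomposes the target sum into \emph{Type I} pieces of the shape $\sum_{m \le M} a_m \sum_{n} e(\alpha m n)$, in which the inner variable runs freely and $M$ is restricted, and \emph{Type II} pieces that are genuinely bilinear in two long variables.

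For the Type I sums the inner exponential is a geometric progression bounded by $\min(N/m,\, 1/\|\alpha m\|)$, where $\|\cdot\|$ denotes distance to the nearest integer. Using the rational approximation $\alpha = a/b + \beta$ with $|\beta| \le (\log N)^B/N$ and $b \le (\log N)^B$, one shows that the only values of $m$ with $\|\alpha m\|$ small are those close to multiples of $b$; evaluating the main contribution through the Ramanujan sum $c_b(a) = \sum_{\gcd(r,b)=1} e(ra/b)$ and the identity $c_b(a)/b = \mu(b)/\phi(b)$ yields the main term $N/\phi(b)$, plus a divisor-type error $\mathcal{O}\bigl(N (\log N)^{\mathcal{O}(1)}/\phi(b) + (\log N)^{\mathcal{O}(B)} \cdot b\bigr)$.

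For the Type II sums I would apply Cauchy--Schwarz in the outer variable, reducing to counting triples $(m_1, m_2, n)$ with $\alpha(m_1 - m_2)n$ near an integer; the same Diophantine information on $\alpha$ bounds this count by $N^2 (\log N)^{\mathcal{O}(1)}/\phi(b) + N (\log N)^{\mathcal{O}(B)}$, which upon taking a square root contributes $\mathcal{O}\bigl(N (\log N)^{2B}/(\log N)^{C}\bigr)$ for any chosen $C$, provided the smoothing parameters $U,V$ are tuned so that $U,V,N/(UV)$ all lie in the admissible range for the Diophantine hypothesis. The standard balanced choice $U = V = N^{1/3}$ works, modulated by a few logarithmic factors.

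The main obstacle is a careful bookkeeping of logarithmic losses: every application of Cauchy--Schwarz, every divisor-function average, and every summation over the $O(\log N)$ ranges produced by Vaughan's identity introduces factors of $\log N$ which must be absorbed into $(\log N)^{2B}$ inside the error and not into the main term $N/\phi(b)$. The condition $C > 2B$ is exactly what guarantees that, after all these logarithmic factors are collected, the Type II error $N(\log N)^{2B}/(\log N)^C$ remains smaller than the main term whenever $b$ is not too large, so that one obtains the clean bound stated. A self-contained presentation is given in \cite[Lemma~8.3]{nathanson1996additive}, which I would essentially follow.
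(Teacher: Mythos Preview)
The paper does not prove this theorem at all; it is quoted as a classical result of Vinogradov, with a pointer to \cite[Lemma~8.3]{nathanson1996additive} for a proof. So there is nothing to compare against except the standard literature proof.

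Your sketch, however, mixes up the major-arc and minor-arc machinery. Vaughan's identity and the Type~I/Type~II decomposition are the tools for the \emph{minor arc} bound (Theorem~\ref{prime estimate 2} in the paper); they produce an upper bound of the shape $N/b^{1/2}+N^{4/5}+N^{1/2}b^{1/2}$ times logarithmic factors, but they do \emph{not} isolate a main term of size $N/\phi(b)$. In particular, your claim that the Type~I sums ``yield the main term $N/\phi(b)$'' via Ramanujan sums is not right: the Type~I pieces in Vaughan's identity involve $\mu$ and $\Lambda$ truncated at the cut-offs $U,V$, and there is no mechanism by which summing $\min(N/m,1/\|\alpha m\|)$ produces the precise constant $1/\phi(b)$.

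The actual proof of this major-arc estimate (as in Nathanson's Lemma~8.3) goes through the Siegel--Walfisz theorem. One writes $\alpha=a/b+\beta$, splits the sum over residue classes $r\pmod b$, and uses Siegel--Walfisz to evaluate $\sum_{n\le x,\ n\equiv r\ (b)}\Lambda(n)=x/\phi(b)+O(x(\log x)^{-C})$ uniformly for $b\le(\log N)^B$; partial summation against $e(\beta n)$ then gives the main term $\frac{\mu(b)}{\phi(b)}\sum_{n\le N}e(\beta n)$ (bounded by $N/\phi(b)$) and an error of the stated size. The Ramanujan sum does appear, but as $\sum_{r:\gcd(r,b)=1}e(ar/b)=\mu(b)$, applied after Siegel--Walfisz, not inside a Type~I estimate. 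The hypothesis $C>2B$ controls the logarithmic losses coming from partial summation over the range $|\beta|\le(\log N)^B/N$ combined with the Siegel--Walfisz error. You should replace the Vaughan-based outline with this Siegel--Walfisz argument.
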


We also require the following bound, which can be found in~\cite[Theorem~8.5]{nathanson1996additive} as well.
\begin{theorem}[Vinogradov {\cite{vinogradov1937representation,vinogradow1937some}}]\label{prime estimate 2}
    There exists $c>0$ satisfying the following.
    Let $\alpha \in \mathbb{R}$ and $N\in \mathbb{N}$ be such that there exist coprime integers $a$ and $b$ with $1 \le b\le N$ and $\left| \alpha - \frac{a}{b} \right| \le \frac{1}{b^2}$. Then
    \[
    \left|
    \sum_{p\le N} (\log p) e(\alpha p)
    \right|
    \le c \left( \frac{N}{b^{\frac{1}{2}}} + N^{\frac{4}{5}}   +  N^{\frac{1}{2}}b^{\frac{1}{2}}                   \right) (\log N)^4.
    \]
\end{theorem}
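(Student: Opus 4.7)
The plan is to prove this via Vinogradov's method in its modern reformulation through Vaughan's identity. The first step is to replace the sum over primes with a sum over the von Mangoldt function $\Lambda$: one has $\sum_{p \le N}(\log p)e(\alpha p) = \sum_{n \le N}\Lambda(n) e(\alpha n) + O(\sqrt{N}\log N)$, where the error comes from prime powers $p^k$ with $k \ge 2$ and is absorbed into the $N^{4/5}$ term. Vaughan's identity then decomposes $\Lambda(n)$ (restricted to $n > U$, for a parameter $U$ to be chosen) into four Dirichlet convolutions, yielding a corresponding decomposition of $S(\alpha) := \sum_{n \le N}\Lambda(n) e(\alpha n)$ into a trivial piece of size $O(U)$ plus bilinear sums of either \emph{Type I} or \emph{Type II} shape.

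The second step is to estimate the Type I sums, which have the form $\sum_{m \le M} a_m \sum_{k \le N/m} e(\alpha m k)$ with $|a_m| \le \log N$ and $M$ controlled by the parameters of the identity. Evaluating the inner geometric progression bounds it by $\min(N/m,\ \|\alpha m\|^{-1})$, where $\|\cdot\|$ denotes distance to the nearest integer. I would then invoke the standard lemma that, under the hypothesis $|\alpha - a/b| \le 1/b^2$ with $\gcd(a,b) = 1$ and $b \le N$, one has
\[
\sum_{m \le M}\min\!\Bigl(\tfrac{N}{m},\ \|\alpha m\|^{-1}\Bigr) \ \ll\ \Bigl(\tfrac{N}{b} + M + b\Bigr)\log(2bN),
\]
which yields a Type I contribution of roughly the desired shape once partial summation is used to handle the smooth coefficients.

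The third step is to estimate the Type II sums $\sum_{m}\sum_{k} a_m b_k e(\alpha m k)$, with both variables in substantial dyadic ranges and $|a_m|, |b_k| \ll \log N$. Applying Cauchy--Schwarz in one variable, opening the square, and swapping the order of summation reduces the problem to bounding $\sum_{|h| \le H}\min(\text{length},\ \|\alpha h\|^{-1})$, which the same Diophantine lemma controls. Balancing the ranges of $m$ and $k$, and then optimizing by choosing $U = N^{2/5}$ in Vaughan's identity, aligns the Type I and Type II contributions so that their combined bound takes the promised form $N/\sqrt{b} + N^{4/5} + \sqrt{Nb}$.

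The main obstacle I anticipate is the careful bookkeeping required in Vaughan's decomposition: one must verify that each of the four convoluted pieces genuinely falls into either Type I or Type II form with bounded, divisor-controlled coefficients, so that the $(\log N)^4$ factor suffices to absorb all logarithmic losses arising from the divisor functions, the partial summation needed to install the $\log p$ weight, and the ``min'' lemma itself. This combinatorial-arithmetic rearrangement is classical but genuinely the technical heart of the argument; everything else reduces to the elementary estimates above.
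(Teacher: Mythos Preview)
Your outline is essentially correct and follows the standard modern proof of this classical estimate via Vaughan's identity (as presented, for instance, in Nathanson's \emph{Additive Number Theory}, which the paper itself cites for this result as Theorem~8.5). The decomposition into Type~I and Type~II sums, the geometric-progression/``min'' lemma for Type~I, Cauchy--Schwarz for Type~II, and the choice $U=N^{2/5}$ are all as they should be.

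That said, the paper does not give its own proof of this theorem: it is simply quoted from the literature with references to Vinogradov's original papers and to Nathanson's textbook. So there is nothing to compare against --- the paper treats this as a black box. It is perhaps worth remarking that Vinogradov's original 1937 argument predates Vaughan's identity (1977) and instead used a direct combinatorial sieve decomposition of the von Mangoldt function; what you have sketched is the streamlined modern substitute, which is exactly what one finds in the cited reference~\cite{nathanson1996additive}.
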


By gathering all the tools recalled above, we now obtain the following more unified estimate for certain exponential sums within the minor arcs.

\begin{lemma}\label{prime minor 1}
For any fixed integer $Q \ge 2$, we have
\[    
\limsup_{N \to \infty} \sup_{\alpha\in \mathfrak{m}_{N,Q}}  \left| \frac{1}{N} \sum_{p\le N} (\log p) e(\alpha p) \right| \le  \max_{r>Q} \left\{ \frac{1}{\phi(r)} \right\}.
\]
\end{lemma}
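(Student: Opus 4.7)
The plan is to combine Dirichlet's theorem (Theorem \ref{dirichlet}) with Vinogradov's two exponential sum estimates (Theorems \ref{prime estimate 1} and \ref{prime estimate 2}), splitting into cases according to the size of the denominator in the Dirichlet approximation. Fix $\alpha \in \mathfrak{m}_{N,Q}$ and set $W = N/(\log N)^9$. Dirichlet then produces coprime integers $a, b$ with $1 \le b \le W$ and $|\alpha - a/b| \le 1/(bW) \le (\log N)^9/N$. The crucial first observation is that the minor arc hypothesis forces $b > Q$: if $b = 1$, then $\alpha \in \mathcal{I}_N(a)$; and if $2 \le b \le Q$, then $a \ne 0$ by coprimality and $\alpha \in \mathfrak{M}_{N,Q}(a,b)$. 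Either possibility contradicts $\alpha \in \mathfrak{m}_{N,Q}$.

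With $b > Q$ secured, I split the remaining range. If $Q < b \le (\log N)^9$, apply Theorem \ref{prime estimate 1} with $B = 9$ and some $C > 18$, obtaining
\[
\left| \sum_{p \le N} (\log p)\, e(\alpha p) \right| \le \frac{N}{\phi(b)} + \hat{c}(B,C)\, N (\log N)^{2B - C},
\]
where $N/\phi(b) \le N \max_{r > Q}\{1/\phi(r)\}$ and the error term is $o(N)$. If instead $(\log N)^9 < b \le W$, then $|\alpha - a/b| \le 1/(bW) \le 1/b^2$ (since $b \le W$), so Theorem \ref{prime estimate 2} gives a bound of order $(N/b^{1/2} + N^{4/5} + N^{1/2} b^{1/2})(\log N)^4$, and for $b$ in this range each summand is $O(N(\log N)^{-1/2})$, hence $o(N)$. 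Dividing by $N$ and taking $\limsup_{N \to \infty}$ yields the stated inequality.

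The main subtlety is the calibration of the exponent $9$ in the definition of the minor arcs. It must be large enough that Theorem \ref{prime estimate 2} delivers an $o(N)$ contribution in the third range of $b$ (which requires an exponent exceeding $8$, because of the $(\log N)^4$ factor in Theorem \ref{prime estimate 2}), while also being compatible with the choice of $W$ so that any Dirichlet approximation with denominator at most $Q$ is automatically absorbed into the excluded integral and major arcs. The choice $9$ works since both requirements land at the same parameter; this matching of exponents is where the structure of the argument really rests, and had the definition of $\mathfrak{m}_{N,Q}$ been any tighter the minor-arc estimate would fail.
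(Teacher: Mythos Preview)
Your proof is correct and follows essentially the same route as the paper: apply Dirichlet with $W=N/(\log N)^9$, use the minor-arc hypothesis to force $b>Q$, then split into the two ranges $Q<b\le(\log N)^9$ and $(\log N)^9<b\le W$, invoking Theorems~\ref{prime estimate 1} and~\ref{prime estimate 2} respectively. The paper makes the specific choice $C=19$ where you write ``some $C>18$'', and it omits your closing commentary on the calibration of the exponent~$9$, but the arguments are otherwise identical.
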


\begin{proof}
Consider an integer $N$ which is sufficiently large so that $N(\log N)^{-9} \ge 1$. By Theorem~\ref{dirichlet} applied with $W = N(\log N)^{-9}$, we have that for each $\alpha \in \mathbb{R}$, there exist coprime integers $a$ and $b$ with $1\le b \le  N(\log N)^{-9}$ and $\left| \alpha - \frac{a}{b} \right| \le \frac{(\log N)^{9}}{bN}$.
Moreover, if $\alpha \in \mathfrak{m}_{N,Q}$, then we further have that $Q<b$.

First, if $b \le (\log N)^9$, then by Theorem~\ref{prime estimate 1} applied with $B=9$ and $C=19$, we have that
\[
\left|\sum_{p\le N} (\log p) e(\alpha p)\right| \leq \frac{N}{\phi(b)}+\hat{c}(B,C)\left(\frac{N}{\log N} \right),
\]where $\hat{c}(B,C)$ denotes the constant from the theorem.
Note that
\[
\lim_{N \to \infty} \frac{1}{N} \left( \frac{N}{\phi(b)}  + \hat{c}(B,C)\left(\frac{N}{\log N} \right) \right)
= \frac{1}{\phi(b)}\le  \max_{r>Q} \left\{ \frac{1}{\phi(r)} \right\}.
\] 

For the second case, suppose that $(\log N)^9 < b \le N(\log N)^{-9}$. First we apply Theorem~\ref{prime estimate 2}, where we write $c$ for the universal constant from this theorem; then we use the fact that $b$ is relatively large. So, since $\left| \alpha - \frac{a}{b} \right| \le \frac{(\log{N})^9}{bN} \le \frac{1}{b^2}$, we have
\begin{align*}
\left|
 \sum_{p\le N} (\log p) e(\alpha p)
 \right|
&\le
 c \left( \frac{N}{b^{\frac{1}{2}}} + N^{\frac{4}{5}}   +  N^{\frac{1}{2}}b^{\frac{1}{2}}                   \right) (\log N)^4 
\\
&\le
c \left( N (\log N)^{- \frac{9}{2}} + N^{\frac{4}{5}} + N(\log N)^{- \frac{9}{2}} \right) (\log N)^4
\\
&\le
c \left( N (\log N)^{- \frac{1}{2}} + N^{\frac{4}{5}} (\log N)^4 + N(\log N)^{- \frac{1}{2}} \right).
\end{align*}
Note that 
\[
\lim_{N \to \infty} \frac{1}{N} c \left( N (\log N)^{- \frac{1}{2}} + N^{\frac{4}{5}} (\log N)^4 + N(\log N)^{- \frac{1}{2}} \right)
= 0.
\]Lemma~\ref{prime minor 1} now follows by combining these two cases.
\end{proof}

We are now ready to obtain the minor arc estimate for the actual exponential sum we shall require.

\begin{lemma}\label{prime minor 2}
For any fixed integer $Q \ge 2$, we have
\[    
\limsup_{N \to \infty} \sup_{\alpha\in \mathfrak{m}_{N,Q}}  \left| \sum_{p\le N} g_N(p) e(\alpha p) \right| \le  \frac{1}{2} \max_{r>Q} \left\{ \frac{1}{\phi(r)} \right\}.
\]
\end{lemma}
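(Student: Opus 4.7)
The plan is to use partial summation to reduce the target sum to an integral of $T(t) := \sum_{p \le t} (\log p)\, e(\alpha p)$, and then to control $|T(t)|$ uniformly by reusing the Vinogradov-based analysis from Lemma~\ref{prime minor 1}. Writing $h(t) = \frac{1}{N}(1-t/N)$, one has $h(N) = 0$ and $h'(t) = -1/N^2$, so partial summation yields
\[
\sum_{p \le N} g_N(p)\, e(\alpha p) \;=\; \frac{1}{N^2} \int_0^N T(t)\, dt,
\]
and it suffices to bound $\frac{1}{N^2}\int_0^N |T(t)|\, dt$.

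Fix a small $\epsilon > 0$ and split the integral at $t = \epsilon N$. On $[0, \epsilon N]$ the trivial bound $|T(t)| \le \theta(t) := \sum_{p \le t} \log p$ together with the prime number theorem $\theta(t) \sim t$ contributes at most $\epsilon^2/2 + o(1)$ to the target quantity, which vanishes as $\epsilon \to 0$.

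The main step is to show that, uniformly in $\alpha \in \mathfrak{m}_{N,Q}$ and $t \in [\epsilon N, N]$,
\[
|T(t)| \;\le\; t \max_{r > Q} \left\{ \tfrac{1}{\phi(r)} \right\} + o(t).
\]
The subtlety is that $\alpha$ is only assumed to lie in the level-$N$ minor arcs, which is a \emph{weaker} condition than lying in the level-$t$ minor arcs for $t < N$ (whose defining arcs are larger). I would resolve this by mimicking the proof of Lemma~\ref{prime minor 1}: apply Dirichlet's theorem (Theorem~\ref{dirichlet}) at level $W = N/(\log N)^9$ to obtain coprime integers $a, b$ with $1 \le b \le W$ and $|\alpha - a/b| \le (\log N)^9/(bN)$, and observe that $\alpha \in \mathfrak{m}_{N,Q}$ forces $b > Q$. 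Since $t \ge \epsilon N$ one has $\log t \sim \log N$, so these same $a, b$ verify the hypotheses of Theorem~\ref{prime estimate 1} at level $t$ (with $B = 10$, $C = 21$) when $b \le (\log N)^9$, yielding $|T(t)| \le t/\phi(b) + O(t/\log t)$; and they verify the hypotheses of Theorem~\ref{prime estimate 2} at level $t$ when $(\log N)^9 < b \le N/(\log N)^9$ (in particular $|\alpha - a/b| \le 1/b^2$ follows from the upper bound on $b$), yielding $|T(t)| = O_\epsilon(t/(\log t)^{1/2})$. Since $b > Q$ implies $1/\phi(b) \le \max_{r>Q}\{1/\phi(r)\}$, both cases produce the claimed uniform estimate.

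Integrating gives $\frac{1}{N^2} \int_{\epsilon N}^N |T(t)|\, dt \le \frac{1}{2} \max_{r > Q}\{1/\phi(r)\} + o_\epsilon(1)$. Combining this with the small-$t$ contribution and sending first $N \to \infty$ and then $\epsilon \to 0$ finishes the proof. The main obstacle is precisely the scale mismatch between $\mathfrak{m}_{N,Q}$ and $\mathfrak{m}_{t,Q}$; the resolution is the observation that a single Dirichlet approximation obtained at the coarsest scale $N$ is strong enough to drive either Vinogradov estimate at every intermediate scale $t \in [\epsilon N, N]$.
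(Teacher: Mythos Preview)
Your proof is correct and follows the same overall strategy as the paper --- reduce the weighted sum to averages of the unweighted sums $T(t)=\sum_{p\le t}(\log p)e(\alpha p)$ and then feed in the Vinogradov estimates --- but the execution differs in two respects. The paper carries this out \emph{discretely}: it writes $1-p/N$ as (approximately) $\frac{1}{M}$ times the number of intervals $[0,Nj/M]$, $j=1,\dots,M-1$, containing $p$, which turns the target sum into $\frac{1}{M}\sum_{j=1}^{M-1}\frac{M}{Nj}\,T(Nj/M)$ plus an $O(1/M)$ error; it then cites Lemma~\ref{prime minor 1} for each summand and lets $M\to\infty$. Your Abel-summation identity $\sum_{p\le N} g_N(p)e(\alpha p)=\frac{1}{N^2}\int_0^N T(t)\,dt$ is simply the continuous analogue of this Riemann-sum decomposition, and your cutoff at $t=\epsilon N$ plays the role of sending $M\to\infty$. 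The second difference is that you are more explicit than the paper about the scale mismatch: when the paper invokes Lemma~\ref{prime minor 1} for the sum up to $Nj/M$ while only assuming $\alpha\in\mathfrak m_{N,Q}$, it is tacitly reusing the single Dirichlet approximation obtained at scale $N$ inside the Vinogradov estimates at scale $Nj/M$, exactly as you spell out. Your treatment makes this step transparent (and shows why bumping $B$ from $9$ to $10$ in Theorem~\ref{prime estimate 1} absorbs the discrepancy), which is a genuine expository improvement over the paper's one-line citation.
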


\begin{proof}
The proof of this lemma is similar to the proof of Lemma~\ref{lem:avgPrime}. We break the sum up into larger and larger pieces, and consider how many pieces each prime occurs in. So, let $N,M\in \mathbb{N}$ with $M \le N$, and consider a prime $p \le N$. Notice that $p$ occurs in $M-\left\lceil\frac{Mp}{N} \right\rceil$ of the intervals $\{[0,\frac{Nj}{M}]: j \in \{1,2,\ldots, M-1\}\}$. Moreover, we have\begin{align*}
\left| \frac{1}{M-\left\lceil\frac{Mp}{N} \right\rceil}-\frac{1}{M\left(1-\frac{p}{N} \right)}\right| \le \frac{1}{M\left(1-\frac{p}{N} \right)}.
\end{align*}Thus, by applying the triangle inequality once per prime $p \le N$, for any $\alpha \in \mathbb{R}$ we have \begin{align*}
  \left| \sum_{p\le N} g_N(p) e(\alpha p) \right| \le \left|  \sum_{j=1}^{M-1}\sum_{p \le \frac{Nj}{M}}\frac{g_N(p) e(\alpha p)}{M\left(1-\frac{p}{N} \right)}\right|+ \sum_{p \le N}\frac{g_N(p)}{M\left(1-\frac{p}{N} \right)}.
\end{align*}The rightmost sum does not depend on $\alpha$, and by the prime number theorem we have\begin{align*}
   \lim_{N\rightarrow \infty}\sum_{p \le N}\frac{g_N(p)}{M\left(1-\frac{p}{N} \right)} = \lim_{N\rightarrow \infty}\sum_{p \le N}\frac{\log{p}}{NM} = \frac{1}{M}.
\end{align*}For the other sum, by Lemma~\ref{prime minor 1}, for each $M\in\mathbb{N}$ and $j \in \{1,2,\ldots, M-1\}$, we have\begin{align*}\limsup_{N\rightarrow \infty}\sup_{\alpha\in \mathfrak{m}_{N,Q}} \left|  \sum_{p \le \frac{Nj}{M}}\frac{g_N(p) e(\alpha p)}{M\left(1-\frac{p}{N} \right)}\right| &= \frac{j}{M^2}\left(\limsup_{N\rightarrow \infty}\sup_{\alpha\in \mathfrak{m}_{N,Q}} \left|  \frac{M}{Nj}\sum_{p \le \frac{Nj}{M}}(\log{p}) e(\alpha p)\right|\right)\\
&\leq \frac{j}{M^2}\left(\max_{r>Q} \left\{ \frac{1}{\phi(r)} \right\}\right).\end{align*} Lemma~\ref{prime minor 2} now follows from the fact that $\lim_{M \rightarrow \infty}\frac{1}{M} = 0$ and $\lim_{M \rightarrow \infty}\sum_{j=1}^{M-1}\frac{j}{M^2} = \frac{1}{2}$.
\end{proof}

We are now ready to use these exponential sum estimates to bound the independence density of the Cayley graph $G(\mathbb{Z}^2, \pm\mathbb{P}D_{Q!, k})$. Recall that for each $N\in \mathbb{N}$, we write $\mathbb{P}_N$ for the set of all primes $p \le N$.

\begin{proposition}\label{prime graphs}
    For any integers $Q,k\ge 2$,
\[    
\overline{\alpha}(G(\mathbb{Z}^2, \pm \mathbb{P}D_{Q!,k})) \le \liminf_{N \rightarrow \infty} \overline{\alpha}(G(\mathbb{Z}^2, \pm \mathbb{P}_N D_{Q!,k}))\le\frac{2 + k\max_{r>Q} \left\{ \frac{1}{\phi(r)} \right\}}{k + 1+k\max_{r>Q} \left\{ \frac{1}{\phi(r)} \right\}}.
\]
\end{proposition}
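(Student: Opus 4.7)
The plan is to carry out the same template as the proof of Proposition~\ref{poly graphs}, but with the prime-variant lemmas of this section replacing their polynomial analogues. For the first inequality, I note the inclusion $\pm\mathbb{P}_N D_{Q!,k} \subseteq \pm\mathbb{P}D_{Q!,k}$, which makes $G(\mathbb{Z}^2, \pm\mathbb{P}_N D_{Q!,k})$ a spanning subgraph of $G(\mathbb{Z}^2, \pm\mathbb{P}D_{Q!,k})$. Any independent set of the latter is independent in the former, so $\overline{\alpha}(G(\mathbb{Z}^2, \pm\mathbb{P}D_{Q!,k})) \le \overline{\alpha}(G(\mathbb{Z}^2, \pm\mathbb{P}_N D_{Q!,k}))$ for each $N$, and taking the limit yields the first inequality.

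For the second inequality I apply Theorem~\ref{ratio bound} to each graph $G(\mathbb{Z}^2, \pm\mathbb{P}_N D_{Q!,k})$ with the centrally symmetric weights $w_N$ defined at the start of this section, and then pass to the limit $N\to\infty$. Lemma~\ref{lem:supPrime} already gives $\lim_{N\to\infty}\sup_{u\in\mathbb{R}^2}\widehat{w}_N(u) = k-1$, so the remaining task is to show that
\[
\liminf_{N\to\infty} \inf_{u\in\mathbb{R}^2} \widehat{w}_N(u) \;\ge\; -\Bigl(2 + k\max_{r>Q}\{1/\phi(r)\}\Bigr).
\]

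To prove this bound, for each fixed $u$ I would partition the outer sum
\[
\widehat{w}_N(u) = \sum_{d\in\pm D_{Q!,k}} \sum_{p\le N} g_N(p)\, e(u\cdot pd)
\]
according to which of $\mathfrak{M}_{N,Q}$, $\mathcal{I}_N$, or $\mathfrak{m}_{N,Q}$ contains $u\cdot d$. Since these sets together cover $\mathbb{R}$, every $d$ is handled. Lemma~\ref{lem:primeMajor} directly yields a contribution of at least $-2$ from the major-arc part. For the integral-arc part, central symmetry lets me pair $d$ with $-d$ and then invoke Lemma~\ref{prime zero} on each pair, contributing at least $0$. For the minor-arc part, there are at most $|\pm D_{Q!,k}| = 2(k-1)\le 2k$ summands, each of modulus at most $\tfrac{1}{2}\max_{r>Q}\{1/\phi(r)\}$ in the limit by Lemma~\ref{prime minor 2}, giving a combined contribution of at least $-k\max_{r>Q}\{1/\phi(r)\}$. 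Summing the three pieces and feeding the result into the ratio bound yields
\[
\lim_{N\to\infty}\overline{\alpha}(G(\mathbb{Z}^2, \pm\mathbb{P}_N D_{Q!,k})) \le \frac{2 + k\max_{r>Q}\{1/\phi(r)\}}{(k-1) + 2 + k\max_{r>Q}\{1/\phi(r)\}},
\]
which is the claimed bound. The main obstacle is essentially bookkeeping rather than new mathematics: I must make sure the arc decomposition is used consistently (resolving any overlap between $\mathcal{I}_N$ and $\mathfrak{M}_{N,Q}$ in a fixed way so that no $d$ is double-counted) and that Lemma~\ref{sandwich}, invoked inside Lemma~\ref{lem:primeMajor}, really does cap the number of major-arc $d$'s by an absolute constant independent of $u$ and $k$, so the $-2$ bound for the major-arc contribution does not degrade with $k$.
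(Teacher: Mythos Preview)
Your proposal is correct and follows essentially the same approach as the paper: apply Theorem~\ref{ratio bound} with the weights $w_N$, use Lemma~\ref{lem:supPrime} for the supremum, and split the infimum calculation over $d\in\pm D_{Q!,k}$ according to the arc containing $u\cdot d$, invoking Lemmas~\ref{lem:primeMajor}, \ref{prime zero}, and \ref{prime minor 2} exactly as the paper does. Your remarks about pairing $d$ with $-d$ in the integral-arc part and about the bookkeeping of arc overlaps are accurate refinements of what the paper leaves implicit.
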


\begin{proof}
The first inequality holds since $\mathbb{P}_1 \subseteq \mathbb{P}_2 \subseteq \ldots \subseteq \mathbb{P}$. So we just need to upper-bound the limit.
For each $N$, we apply Theorem~\ref{ratio bound} to the Cayley graph $G(\mathbb{Z}^2, \pm \mathbb{P}_N D_{Q!,k})$ with weights $w_N$ (and the corresponding weight function $\widehat{w}_N$) defined at the beginning of this section. Thus\[
\overline{\alpha}(G(\mathbb{Z}^2, \pm \mathbb{P}_N D_{Q!,k}))
\le
\frac{-\inf_{u\in \mathbb{R}^2}  \widehat{w}_N(u) }{  \sup_{u\in \mathbb{R}^2}  \widehat{w}_N(u)   -\inf_{u\in \mathbb{R}^2}  \widehat{w}_N(u)}.
	\]

Lemma~\ref{lem:supPrime} tells us right away that $\lim_{N \rightarrow \infty}\sup_{u\in \mathbb{R}^2} \widehat{w}_N(u)=k-1$. So it suffices to prove that $\liminf_{N \rightarrow \infty}\inf_{u\in \mathbb{R}^2} \widehat{w}_N(u) \ge - \left( 2 + k\max_{r>Q} \left\{ \frac{1}{\phi(r)} \right\}\right)$. We do so by splitting the outer sum below into three parts based on whether $u \cdot d$ is in $\mathfrak{M}_{N,Q}$, $\mathcal{I}_N $, or $\mathfrak{m}_{N,Q}$. By considering the three limits separately, and applying Lemmas~\ref{lem:primeMajor},~\ref{prime zero}, and~\ref{prime minor 2} to the major, integral, and minor arcs (respectively), we obtain\begin{align*}
\liminf_{N \rightarrow \infty}\inf_{u\in \mathbb{R}^2} \widehat{w}_N(u) &\ge \liminf_{N \rightarrow \infty}\inf_{u\in \mathbb{R}^2} \sum_{d \in \pm D_{Q!,k}} \sum_{p \le N} g_N(p) e(u\cdot pn)\\
& \ge -\left(2 +0 +\frac{1}{2}|\pm D_{Q!,k}|\max_{r>Q} \left\{ \frac{1}{\phi(r)} \right\}\right)\\
& \ge -\left(2 +k\max_{r>Q} \left\{ \frac{1}{\phi(r)} \right\}\right)
\end{align*}
Here we are using the bound $|\pm D_{Q!, k}|=2k-2\leq 2k$ just to simplify the expression. This completes the proof of Proposition~\ref{prime graphs}.
\end{proof}

Theorem~\ref{prime dist} now follows from Lemma~\ref{embed} and Proposition~\ref{prime graphs} since
\begin{align*}
\limsup_{k \to \infty} \lim_{Q \to \infty} \chi(G(\mathbb{Z}^2, \pm \mathbb{P} D_{Q!,k}))
&\ge 
\limsup_{k \to \infty} \lim_{Q \to \infty}
\frac{1}{\overline{\alpha}(G(\mathbb{Z}^2, \pm \mathbb{P} D_{Q!,k}))}
\\
&\ge
\limsup_{k \to \infty} \lim_{Q \to \infty}
\frac{k + 1+k\max_{r>Q} \left\{ \frac{1}{\phi(r)} \right\}}{2 + k \max_{r>Q} \left\{ \frac{1}{\phi(r)} \right\}}
\\
&\ge
\limsup_{k \to \infty}
\frac{k + 1 }{2}
\\
&=
\infty ,
\end{align*}
where we are using the bound that $\phi(r) \ge \sqrt{r/2}$ for each $r \in \mathbb{N}$.

\section{Concluding remarks and open problems}\label{sec:con}

The Furstenberg-S{\'a}rk{\"o}zy theorem~\cite{furstenberg1977ergodic,sarkozy1978difference} about colourings of the integers can be extended by the polynomial van der Waerden’s theorem of Bergelson and Leibman~\cite{bergelson1996polynomial}. As a special case, this theorem states that for every non-zero polynomial $f$ with integer coefficients so that $f(0)=0$, and for every finite colouring of the integers, there exist arbitrarily long monochromatic arithmetic progressions with difference $f(n)$ for some $n \in \mathbb{Z}$. 

Bergelson and Leibman~\cite{bergelson1996polynomial} more generally proved a multidimensional version.
A set $X'\subseteq \mathbb{R}^d$ is a \emph{homothetic copy} of a set $X\subseteq \mathbb{R}^d$ if $X'$ can be obtained from $X$ by the composition of a (positive) uniform
scaling and a translation.

\begin{theorem}[Bergelson and Leibman  {\cite{bergelson1996polynomial}}]\label{polyvan}
    Let $f(x)=a_rx^r + \cdots + a_1x$ be a polynomial with integer coefficients so that $a_r \ge 1$ and $f(0)=0$, and let $X\subseteq \mathbb{Z}^d$ be a finite set.
    Then every finite colouring of $\mathbb{Z}^d$ contains a monochromatic homothetic copy of $X$ with scaling factor $f(n)$ for some $n\in \mathbb{N}$ with $f(n)\ge 1$.
\end{theorem}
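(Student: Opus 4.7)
The plan is to follow the standard ergodic-theoretic route that Bergelson and Leibman themselves took, combining Furstenberg's correspondence principle with a multiple polynomial recurrence theorem for measure-preserving $\mathbb{Z}^d$-actions. The first move is to reduce the colouring statement to a density statement: in any finite colouring of $\mathbb{Z}^d$, some colour class $C$ has positive upper Banach density $d^\ast(C)>0$ by pigeonhole, so it suffices to prove that any $C\subseteq \mathbb{Z}^d$ with $d^\ast(C)>0$ contains a homothetic copy of $X$ with scaling factor $f(n)$ for some $n\in\mathbb{N}$ with $f(n)\ge 1$.

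Next I would invoke Furstenberg's correspondence principle. Given $C\subseteq \mathbb{Z}^d$ with $d^\ast(C)>0$, one constructs a probability-measure-preserving $\mathbb{Z}^d$-action $(T_g)_{g\in\mathbb{Z}^d}$ on some probability space $(Y,\mathcal{B},\mu)$ and a set $A\subseteq Y$ with $\mu(A)=d^\ast(C)$ satisfying, for every finite $F\subseteq \mathbb{Z}^d$,
\[
d^\ast\!\left(\bigcap_{g\in F}(C-g)\right)\ \ge\ \mu\!\left(\bigcap_{g\in F} T_g^{-1}A\right).
\]
The standard construction takes $Y=\{0,1\}^{\mathbb{Z}^d}$ with the shift action, picks a sequence of boxes $(B_n)$ in $\mathbb{Z}^d$ along which $|C\cap B_n|/|B_n|\to d^\ast(C)$, lets $\mu$ be a weak-$\ast$ limit of the empirical measures attached to the indicator of $C$, and sets $A$ to be the cylinder $\{\omega : \omega(0)=1\}$.

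The core of the proof is then the following multiple polynomial recurrence statement: writing $X=\{x_1,\dots,x_k\}$, there exists $n\in\mathbb{N}$ with $f(n)\ge 1$ such that
\[
\mu\!\left(A\cap T_{f(n)x_1}^{-1}A\cap\cdots\cap T_{f(n)x_k}^{-1}A\right)\ >\ 0.
\]
Combined with the correspondence principle this yields some $y\in\mathbb{Z}^d$ with $y+f(n)x_i\in C$ for every $i$, producing exactly a monochromatic homothetic copy of $X$ with scaling factor $f(n)$.

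The recurrence statement above is the main obstacle, and is effectively the polynomial Szemer\'edi theorem of Bergelson-Leibman in disguise. My approach would be Bergelson's PET (polynomial exhaustion technique) induction, applied to the family of polynomial mappings $n\mapsto f(n)x_i\in\mathbb{Z}^d$. One assigns to each such family a ``PET weight vector'' built from the degrees and leading-coefficient structure of its members, and shows that a single application of van der Corput's inequality to the multiple ergodic average replaces the family by a shifted family whose PET weight is strictly smaller in a carefully chosen well-ordering; iterating, one is reduced to a base case handled by Furstenberg's single polynomial recurrence theorem, which in turn follows from the spectral theorem together with Weyl's equidistribution for $\{f(n)\alpha\}$ in $\mathbb{T}$. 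The hypothesis $f(0)=0$ is essential here, since it ensures that each polynomial $n\mapsto f(n)x_i$ vanishes at $n=0$, a non-degeneracy condition without which the PET induction breaks down. The heaviest technical ingredients are the bookkeeping of the PET ordering and the identification at each intermediate step of a characteristic factor (a nilfactor in the Host-Kra sense) on which the average can be controlled in $L^2$; once these are in place, strict positivity of $\liminf_{N\to\infty}\frac{1}{N}\sum_{n=1}^N \mu(A\cap T_{f(n)x_1}^{-1}A\cap\cdots\cap T_{f(n)x_k}^{-1}A)$ follows from $\mu(A)>0$ by a standard Hilbert-space positivity argument, and picking any $n$ that contributes positively (and is large enough that $f(n)\ge 1$) concludes the proof.
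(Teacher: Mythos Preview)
The paper does not prove this theorem at all: it is stated in Section~5 as a quotation of a result of Bergelson and Leibman, with the citation \cite{bergelson1996polynomial}, and is then used as a black box to deduce Theorem~\ref{thm:multiDim}. There is therefore no ``paper's own proof'' to compare your proposal against.

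That said, your sketch is a reasonable high-level outline of the Bergelson--Leibman approach (Furstenberg correspondence plus PET induction), and would be an appropriate summary if a proof were required. A couple of minor remarks: the characteristic-factor / Host--Kra nilfactor technology you mention is anachronistic for the original 1996 proof, which proceeds purely by PET induction and van der Corput without identifying nilfactors; and the base case of the PET induction for the \emph{multidimensional} statement is not quite ``Furstenberg's single polynomial recurrence'' but rather the linear multidimensional Szemer\'edi theorem of Furstenberg--Katznelson. These are refinements rather than gaps in what is already only a sketch.
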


Gallai (see~\cite{rado1945note} for a published proof) showed that for every finite set $X\subset \mathbb{R}^d$, every finite colouring of $\mathbb{R}^d$ contains a monochromatic homothetic copy of $X$. A standard proof of Gallai's theorem uses the multi-dimensional van der Waerden’s theorem.
It is well know that with exactly the same standard transference, one can obtain the following from Theorem~\ref{polyvan}.
For another version of Gallai's theorem with restrictions, see~\cite{davies2021solution}.

\begin{theorem}[Bergelson and Leibman  {\cite{bergelson1996polynomial}}]
\label{thm:multiDim}
    Let $f(x)=a_rx^r + \cdots + a_1x$ be a polynomial with integer coefficients so that $a_r \ge 1$ and $f(0)=0$, and let $X\subseteq \mathbb{R}^d$ be a finite set.
    Then every finite colouring of $\mathbb{R}^d$ contains a monochromatic homothetic copy of $X$ with scaling factor $f(n)$ for some $n\in \mathbb{N}$ with $f(n)\ge 1$.
\end{theorem}

One might hope that Theorem~\ref{thm:multiDim} could be extended to more general polynomials (or even primes) along the lines of Theorems~\ref{poly dist} and~\ref{prime dist}: that even for polynomials like $f(n)=2n+1$, every finite colouring of $\mathbb{R}^d$ contains a monochromatic homothetic $M$ copy of $f(n)X$ for each finite set $X\subset \mathbb{R}^d$ (in other words, some isometry maps $f(n)X$ to $M$).
However, this turns out to be false. We prove that even for sets $X\subset \mathbb{Z}^d$, the following condition is necessary: for every non-zero $k\in \mathbb{Z}$, we have $f(\mathbb{Z}) \cap k \mathbb{Z} \not= \emptyset$. 

Our proof is based on spherical colourings as considered by Erd{\H{o}}s, Graham, Montgomery, Rothschild, Spencer, and Straus~\cite{erdos1973euclidean}.
For each positive integer $d$, we let $L^d$ be a three point line in $\mathbb{Z}^d$ consisting of points $x,y,z$ with $\|x-y\|_2=\|y-z\|_2=1$ and $\|x-z\|_2=2$.

\begin{proposition}\label{line}
    Let $R\subseteq \mathbb{N}$ be such that there is a positive integer $k$ with $R\cap k\mathbb{Z} = \emptyset$.
    Then for any positive integer $d$, there exists a $2k^2$-colouring of $\mathbb{R}^d$ containing no monochromatic homothetic copy of $rL^d$ for any $r\in R$.
\end{proposition}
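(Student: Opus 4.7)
The plan is to use a \emph{spherical colouring} of $\mathbb{R}^d$ depending only on the norm $\|v\|_2$. Specifically, I would colour each $v \in \mathbb{R}^d$ by $c(v) = \lfloor \|v\|_2^2 \rfloor \bmod 2k^2$, which uses exactly $2k^2$ colours. The motivation is the parallelogram identity: if $\{x,y,z\}$ is any congruent copy of $rL^d$, then $y$ is the midpoint of the segment $\overline{xz}$ of length $2r$, and so
\[
\|x\|_2^2 + \|z\|_2^2 - 2\|y\|_2^2 = 2r^2.
\]
This identity converts the geometric constraint into a linear relation in the \emph{squared} norms, which is exactly what a colouring of this form can see.

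Suppose towards a contradiction that some $r \in R$ admits a monochromatic congruent copy $\{x,y,z\}$ of $rL^d$. Writing each $\|v\|_2^2 = \lfloor \|v\|_2^2 \rfloor + \{\|v\|_2^2\}$ and substituting into the identity gives
\[
\lfloor \|x\|_2^2 \rfloor + \lfloor \|z\|_2^2 \rfloor - 2\lfloor \|y\|_2^2 \rfloor \;=\; 2r^2 - F, \qquad F := \{\|x\|_2^2\} + \{\|z\|_2^2\} - 2\{\|y\|_2^2\}.
\]
The real number $F$ lies in the open interval $(-2,2)$, and since both sides of the displayed equation are integers, $F$ itself is an integer, so $F \in \{-1,0,1\}$. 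Monochromaticity says the left-hand side is divisible by $2k^2$, hence in particular even; combined with $2r^2$ being even, this forces $F$ to be even, so $F=0$.

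With $F=0$ we obtain $\lfloor \|x\|_2^2 \rfloor + \lfloor \|z\|_2^2 \rfloor - 2\lfloor \|y\|_2^2 \rfloor = 2r^2$, and the modular condition then gives $2k^2 \mid 2r^2$, i.e.\ $k \mid r$, contradicting the hypothesis $R \cap k\mathbb{Z} = \emptyset$. The only subtle point in the whole argument is the choice of modulus $2k^2$ rather than $k^2$: the extra factor of $2$ is exactly what is needed to absorb the ``parity slack'' in the fractional-part correction $F$, ensuring the divisibility condition forces $F=0$ \emph{before} we pass to $k \mid r$. Everything else is a short computation, so this is really the only obstacle.
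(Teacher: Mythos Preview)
Your proof is correct and follows essentially the same spherical-colouring strategy as the paper: reduce via the identity $\|x\|_2^2+\|z\|_2^2-2\|y\|_2^2=2r^2$ to a statement about integers modulo $2k^2$, pin the integer correction term to $\{-1,0,1\}$, use parity to force it to $0$, and conclude $k\mid r$. The only cosmetic difference is that the paper rounds $\|v\|_2^2$ to the \emph{nearest} integer rather than taking the floor, which obliges it to handle the boundary case $|x_i-y_i|=1/2$ separately; your use of $\lfloor\cdot\rfloor$ makes the interval $(-2,2)$ automatically open and avoids that extra sentence.
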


\begin{proof}
    Suppose that $a_1,a_2,a_3 \in \mathbb{R}^d$ form a homothetic copy of $rL^d$ for some $r\in R$, and label these points so that $\| a_1 - a_3 \|_2=2r$. If the points $a_1,a_2,a_3$ are at a distance of $x_1,x_2,x_3$, respectively, from the origin $0$, and the angle $a_1a_20$ is $\theta$, then by the parallelogram law we have $x_1^2+x_3^2 = 2(x_2^2+r^2)$. (This equation also follows from the fact that $x_1^2= x_2^2 + r^2 -2 x_2r \cos \theta$ and $x_3^2= x_2^2 + r^2 +2 x_2r \cos \theta$.)
    
    We will give a colouring of $\mathbb{R}_{\ge 0}$ and then colour each point in $\mathbb{R}^d$ by the square of its distance to the origin. It is therefore enough to find a colouring of $\mathbb{R}_{\ge 0}$ containing no monochromatic triple of distinct points $x_1,x_2,x_3$ with $x_1+x_3 = 2(x_2+r^2)$ for some $r\in R$.
    
    First colour $\mathbb{N}$ by iterating through $2k^2$ different colours in order; that is, make each of the $2k^2$ distinct residues mod$(2k^2)$ a monochromatic colour class. Then consider the colouring of $\mathbb{R}_{\ge 0}$ where each element $x\in \mathbb{R}_{\ge 0}$ receives the same colour as the element $y\in \mathbb{N}$ which it is closest to; if there are two choices, then let $y$ be the smaller of the two. We say that $x$ is \textit{coloured by} this point $y$. Now suppose towards a contradiction that there is a monochromatic triple of distinct points $x_1,x_2,x_3$, with $x_1+x_3 = 2(x_2+r^2)$ for some $r\in R$. Suppose that $x_1, x_2, x_3$ are coloured by $y_1, y_2, y_3 \in \mathbb{N}$, respectively. By the definition of the colouring, \begin{align*}
        |\left(x_1+x_3-2x_2\right)-\left(y_1+y_3-2y_2\right)|\leq |x_1-y_1|+|x_3-y_3|+2|x_2-y_2| \leq 2.
    \end{align*}
    \noindent In fact we must have a strict inequality somewhere, since otherwise $x_i = y_i+1/2$ for each $i \in \{1,2,3\}$, and the left-hand side of the equation above evaluates to~$0$. 
    
    Since $y_1, y_2, y_3$ are integers, it follows that there exists $i \in \{-1,0,1\}$ so that $y_1+y_3-2y_2=2r^2+i$. Since $y_1, y_2, y_3$ all receive the same colour, both $y_1-y_2$ and $y_3-y_2$ are divisible by $2k^2$. Thus $i=0$ and $r^2$ is divisible by $k^2$. This contradicts the fact that $R\cap k\mathbb{Z} = \emptyset$.
\end{proof}

Let us return to colourings of the plane $\mathbb{R}^2$. Theorems~\ref{poly dist} and~\ref{prime dist} exclude finite colourings which avoid monochromatic pairs of points whose distance is, respectively, a value of a fixed polynomial with integer coefficients, or a prime.
It is natural to ask what other sets of forbidden distances ensure that there is no finite colouring of the plane avoiding these distances.
Soifer~\cite{soifer2010between} asked about when the forbidden distances are powers of $2$ and Kahle~\cite{soifer2010between} for when the forbidden distances are factorials.

\begin{problem}[Soifer {\cite{soifer2010between}}]
Is there a finite colouring of $\mathbb{R}^2$ containing no monochromatic pair of points whose distance is equal to $2^n$ for some $n\in \mathbb{N}$?
\end{problem}

\begin{problem}[Kahle {\cite{soifer2010between}}]
Is there a finite colouring of $\mathbb{R}^2$ containing no monochromatic pair of points whose distance is equal to $n!$ for some $n\in \mathbb{N}$?
\end{problem}
    
In fact, we do not know of a single infinite subset of $\mathbb{N}$ for which the plane has a finite colouring avoiding those distances. 
While we conjecture that both of the problems above should have negative answers, it appears challenging to extend current methods to solve these two problems due to the exponential growth of the forbidden distances.
Of course, it would be more exciting if either of these two problems has a positive answer.

F{\"u}rstenberg, Katznelson, and Weiss~\cite{furstenberg1990ergodic} proved that if $A\subset \mathbb{R}_{>0}$ is such that $\sup A = \infty$, then there is no finite measurable colouring of the plane avoiding a monochromatic pair of points whose distance is contained in $A$. It is not known if the measurable condition is necessary, but Bukh~\cite{bukh2008measurable} conjectures that it is, in particular when $A$ is algebraically independent.

\begin{conjecture}[Bukh {\cite{bukh2008measurable}}]
Let $A\subset \mathbb{R}_{>0}$ be algebraically independent.
Then there is a finite colouring of $\mathbb{R}^2$ containing no monochromatic pair of points whose distance is contained in $A$.
\end{conjecture}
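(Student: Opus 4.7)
The plan is to exploit the algebraic independence of $A$ through a transcendence-theoretic colouring. First, I would extend $A$ to a transcendence basis $B$ of $\mathbb{R}$ over $\mathbb{Q}$ and let $K$ be the relative algebraic closure of $\mathbb{Q}(B \setminus A)$ inside $\mathbb{R}$. Then $\mathbb{R}$ is algebraic over $K(A)$, and crucially $A$ remains algebraically independent over $K$. For each real $r$, fix a finite subset $\sigma(r) \subseteq A$ such that $r$ is algebraic over $K(\sigma(r))$ (using a well-ordering of $A$ to make the choice canonical), and for each point $p = (x,y) \in \mathbb{R}^2$ set $\sigma(p) = \sigma(x) \cup \sigma(y)$.

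The first concrete step is a \emph{support lemma}: if $\|p_1 - p_2\| = a \in A$, then $a \in \sigma(p_1) \cup \sigma(p_2)$. The proof is short: $a^2 = (x_1 - x_2)^2 + (y_1 - y_2)^2$ is algebraic over $K(\sigma(p_1) \cup \sigma(p_2))$, so $a$ itself is algebraic over that field; if $a$ lay outside $\sigma(p_1) \cup \sigma(p_2)$, this would contradict the algebraic independence of $A$ over $K$. This reduces the conjecture to finding a finite colouring of $\mathbb{R}^2$ that distinguishes every distance-$a$ pair whenever $a \in \sigma(p_1) \cup \sigma(p_2)$.

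From here I would try to attach to each $p$ a finite ``local type'' at every $a \in \sigma(p)$: viewing the coordinates of $p$ as algebraic functions of $a$ over the field $K(\sigma(p) \setminus \{a\})$, record the initial terms of their Puiseux expansions. When $a \in \sigma(p_1) \cap \sigma(p_2)$, the equation $(x_1-x_2)^2 + (y_1-y_2)^2 = a^2$ should force a finite list of possible relations between the local types of $p_1$ and $p_2$. Combined with a Hadwiger--Nelson-style hexagonal tiling, with side length chosen from $K$, of each algebraic stratum $\{p : \sigma(p) = S\}$, this is how I would try to avoid monochromatic distance-$a$ pairs within a single stratum.

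The main obstacle, and presumably the reason the conjecture is still open, is the asymmetric case $a \in \sigma(p_1) \triangle \sigma(p_2)$: a point whose essential support avoids $a$ can still have a partner at distance $a$, and the local-type invariant has nothing to latch onto on one side. Any candidate colouring must therefore couple the intra-stratum colouring above with a global invariant that is sensitive to the \emph{presence} of $a$ in the support, consistently across all $a \in A$ simultaneously. Designing such a coupling — whether by using a matroidal canonical form for $\sigma$ so that ``support-parity'' itself becomes a usable coordinate, or by exploiting the countability of $A$ to do a stratum-by-stratum bookkeeping that commits only finitely many colours overall — is where I expect essentially all of the real difficulty to reside.
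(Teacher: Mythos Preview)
The statement you are addressing is a \emph{conjecture}, not a theorem: the paper merely records Bukh's conjecture in the concluding section on open problems and offers no proof, partial or otherwise. There is therefore nothing in the paper to compare your approach against.

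As for the proposal itself, you are candid that it is not a proof: you isolate the asymmetric case $a \in \sigma(p_1)\triangle\sigma(p_2)$ as the obstruction and correctly note that your local-type invariant gives no information on one side. That is a genuine gap, not a technicality. The ``support lemma'' is fine, and the reduction to strata indexed by finite subsets of $A$ is a natural first move, but the suggested remedies (matroidal canonical forms, support-parity, stratum-by-stratum bookkeeping with finitely many colours) are speculative: you have not indicated why any of them would produce a \emph{finite} colouring, given that the strata are indexed by arbitrary finite subsets of the possibly uncountable set $A$, nor why a hexagonal tiling with side in $K$ would avoid distance $a$ within a stratum when $a$ is transcendental over $K$. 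Until the asymmetric case is handled, this remains an outline of where one might look rather than a proof.
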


Another natural question is whether new or improved methods could result in better bounds for the chromatic number or even the independence density of the considered Cayley graphs.
Perhaps the simplest such Cayley graph to consider is the following simplified version of the Cayley graph used to show that the odd distance graph has infinite chromatic number~\cite{davies2022odd}.
For each positive integer $k$, let $G_k=G(\mathbb{Z}^2,(2 \mathbb{Z} +1) \{ (2^{k-j}, 2^{k+j} ) : j=1, \ldots, k-1\})$. The methods used for the odd distance graph show that $\chi(G_k) \ge \left\lceil \frac{k+1}{2} \right\rceil$ and $\overline{\alpha}(G_k) \le \frac{2}{k+1}$.

\begin{problem}
    What is the asymptotic behavior of $(\chi(G_k))_{k=1}^\infty$ and $(\overline{\alpha}(G_k))_{k=1}^\infty$?
\end{problem}

A simple upper bound for the chromatic number is $\chi(G_k) \le 2^{k-1}$. This is obtained by observing that for each $j \in \{1,\ldots,k-1\}$, the subgraph $G(\mathbb{Z}^2,(2 \mathbb{Z} +1)   (2^{k-j}, 2^{k+j}) )$ of $G_k$ is bipartite. So, $G_k$ can be partitioned into $k-1$ bipartite subgraphs, resulting in the bound of $\chi(G_k) \le 2^{k-1}$ by considering a product colouring.
For the independence density, this also gives the lower bound of $\overline{\alpha}(G_k) \ge  \frac{1}{2^{k-1}}$.
It would be particularly interesting to determine whether or not $(\chi(G_k))_{k=1}^\infty$ grows polynomially.

\section*{Acknowledgements} We would like to thank the referee for many helpful comments which have greatly improved the presentation of this paper.

\bibliographystyle{plain}

\end{document}